\let\csname equation*\endcsname\relax
\let\csname endequation*\endcsname\relax
\newtheorem{defi}{Definition}[section]
\newtheorem{lemma}[defi]{Lemma}
\newtheorem{theorem}[defi]{Theorem}
\newtheorem{asp}{Assumption}  
\newtheorem{definition}[defi]{Definition}
\newcommand{\opef}{{\mathrm{F}}}
\renewcommand{\BB}{{\mathrm{B}}}
\newcommand{\Dint}{{\Lambda}}
\newcommand{\Dpint}{{\mathcal{O}}}
\newcommand{\Dpout}{{\mathcal{O}^c}}
\newcommand{\Dpoutp}{{\mathcal{O}^c_p}}
\newcommand{\Dtot}{{\widehat{D}}}
\newcommand{\ftm}{{f_1}}
\newcommand{\fte}{{f_2}}
\newcommand{\f}{{(\ftm, \fte)}}
\newcommand{\fugc}{{\widetilde{\fug}}}
\renewcommand{\i}{\mathrm{i}}
\renewcommand{\d}[1]{\,\mathrm{d}#1 \,}
\renewcommand{\Re}{\mathrm{Re}\,}
\renewcommand{\Im}{\mathrm{Im}\,}
\newcommand{\ol}[1]{\overline{#1}}
\renewcommand{\epsilon}{\varepsilon}
\newcommand{\loc}{\mathrm{loc}}
\newcommand*{\N}{\ensuremath{\mathbb{N}}}
\newcommand*{\Z}{\ensuremath{\mathbb{Z}}}
\newcommand*{\R}{\ensuremath{\mathbb{R}}}
\newcommand{\normM}[1]{{\llbracket {#1} \rrbracket}}
\newcommand{\interd}[2]{\ensuremath{\llbracket {#1}, {#2} \rrbracket}}
\newcommand{\I}{\mathcal{I}}
\newcommand{\CC}{\ensuremath{\mathbb{C}}}
\newcommand{\sper}{\#}
\newcommand{\dsp}{\displaystyle}
\newcommand{\opeh}{{\cal H}}
\newcommand{\opei}{{\mathrm{I}}}
\newcommand{\opeg}{{\mathrm{G}}}
\newcommand{\open}{{\mathrm{N}}}
\newcommand{\opet}{{\mathrm{T}}}
\newcommand{\Odual}[1]{(#1)^{*}}
\newcommand{\alpm}[1]{{\alpha_\sper(#1)}}
\newcommand{\betm}[1]{{\beta_\sper(#1)}}
\newcommand{\olbetm}[1]{{\ol \beta_\sper(#1)}}
\newcommand{\uip}{u^{i,+}}
\newcommand{\uim}{u^{i,-}}
\newcommand{\uipm}{u^{i,\pm}}
\newcommand{\fg}{{f}}
\newcommand{\ftd}{{\varphi}}
\newcommand{\ftn}{{\psi}}
\newcommand{\grt}{{G_M}}
\newcommand{\gre}[1]{{\Phi(\cdot;#1)}}
\newcommand{\greq}[1]{{\Phi_q(\cdot;#1)}}
\newcommand{\sepmgre}[1]{{\widehat{\Phi}^\pm(\cdot;#1)}}
\newcommand{\sepmgreq}[1]{{\widehat{\Phi}_q^\pm(\cdot;#1)}}
\newcommand{\sepgre}[1]{{\widehat{\Phi}^+(\cdot;#1)}}
\newcommand{\sepgreq}[1]{{\widehat{\Phi}^+_q(\cdot;#1)}}
\newcommand{\coefpmgre}[2]{{\widehat{\Phi}^\pm(#2;#1)}}
\newcommand{\coefpmgreq}[2]{{\widehat{\Phi}_q^\pm(#2;#1)}}
\newcommand{\omehm}{{\Omega^h_M}}
\newcommand{\gamhmp}{{\Gamma^h_M}}
\newcommand{\gamhmm}{{\Gamma^{-h}_M}}
\newcommand{\boul}{{M_L^-}}
\newcommand{\bour}{{M_L^+}}
\newcommand{\spahsm}[1]{{H^{#1}_{\sper}(\omehm)}} 
\newcommand{\spahsmloc}[1]{{H^{#1}_{\sper, \loc}(\Omega_M)}}
\newcommand{\spatracep}[1]{{H^{#1}_\sper(\gamhmp)}}
\newcommand{\spatracem}[1]{{H^{#1}_\sper(\gamhmm)}}
\newcommand{\raycoefp}[2]{{\widehat{#1}^+(#2)}}
\newcommand{\raycoefm}[2]{{\widehat{#1}^-(#2)}} 
\newcommand{\raycoefpm}[2]{{\widehat{#1}^\pm(#2)}}
\newcommand{\hdualcoefpm}[1]{{\widehat{#1}^\pm}}
\newcommand{\seth}{{H}}
\newcommand{\inc}{{\mathrm{inc}}}
\newcommand{\aher}{a}
\newcommand{\zg}{{z}}
\newcommand{\eg}{u}
\newcommand{\wgg}{{w}}
\newcommand{\vgg}{{v}}
\newcommand{\fug}{{{\varphi}}}
\newcommand{\xg}{{x}}
\newcommand{\Rang}{\mathcal{R}}
\newcommand{\Zd}{\Z^{d-1}}
\newcommand{\domp}{\omega}
\newcommand{\fugm}{{\varphi_1}}
\newcommand{\fuge}{{\varphi_2}}
\newcommand{\fgm}{f_1}
\newcommand{\fge}{{f_2}}
\newcommand{\ntm}{{\mu^{-1}}}
\newcommand{\nte}{{n}}
\newcommand{\qtm}{{q}}
\newcommand{\qte}{{p}}
\newcommand{\Htm}{{H}}
\newcommand{\Gtm}{{G}}
\newcommand{\Ntm}{{N}}
\newcommand{\Ttm}{{T}}
\newcommand{\RgHtm}{{{H}}}
\newcommand{\fundnp}[1]{{\Phi(n_p; #1)}}
\newcommand{\tfundnp}[1]{{\widetilde{\Phi}(#1)}}
\newcommand{\wpr}{{w}}
\newcommand{\twpr}{{\widetilde{\wpr}}}
\newcommand{\opes}[1]{{\widetilde{\mathcal{S}}_{#1}}}
\newcommand{\cdomp}{{\widetilde{\domp}}}
\newcommand{\fc}{{\widetilde{f}}}
\renewcommand{\qtm}{{Q}}
\renewcommand{\ntm}{{A}}
\newcommand{\nua}{{\nu_\ntm}}
\newcommand{\C}{{\mathbb{C}}}
\renewcommand{\Zd}{{\Z^{d-1}}}
\renewcommand{\Htm}{{\mathcal{H}}}
\newcommand{\Csp}{{\mathbf{H}(\Dint)}}
\newcommand{\bigped}{{\Theta}}
\renewcommand{\omehm}{{\bigped^h}}
\renewcommand{\Ttm}{{\mathrm{T}}}
\renewcommand{\ftd}{{g}}
\renewcommand{\ftn}{{h}} 
\newcommand{\ZM}{{\Z^{d-1}_{M}}}
\begin{document}

\title[Differential Imaging of Local Perturbations in Anisotropic  Periodic Media]{Differential Imaging of Local Perturbations in Anisotropic  Periodic Media}

\author{Thi-Phong Nguyen$^1$}

\address{$^1$ Department of Mathematics, Rutgers University, 110 Frelinghuysen Road,  Piscataway, NJ 08854-8019, USA.}
\ead{tn242@math.rutgers.edu}
\vspace{10pt}
\begin{indented}
\item[] 
\end{indented}

\begin{abstract}
We discuss the use of differential sampling method to image local perturbations in anisotropic periodic layers, extending earlier works on the isotropic case. We study in particular the new interior transmission problem that is associated with the inverse problem when only a single Floquet-Bloch mode is used. We prove Fredholm properties of this problem under similar assumptions as for classical interior transmission problems. The result of the analysis is then exploited to design an indicator function for the local perturbation. The resulting numerical algorithm is  validated for two dimensional numerical experiments with synthetic data. \end{abstract}

\section{Introduction }

We are interested in the imaging problem where one would like to identify the geometry of a local perturbation in a periodic media. We use multistatic measurements of scattered waves at a fixed frequency. This problem is related to applications in nondestructive testing of periodic structures which are of growing interest with the developments of sophisticated nano-structures like metamaterials, nanograss, etc.  In these applications, often, the  healthy periodic structure has complicated geometry and therefore one would like to avoid modeling issues associated with this background. It is therefore desirable to use an imaging method that does not rely on the Green function associated with the periodic background and directly provide an indicator function for the defect geometry. 
This is for example the case of the differential sampling method that was introduced in \cite{Thi-Phong3}, \cite{tpnguyen}, \cite{Thi-Phong4}. 
Our main objective here is to complement this literature by addressing the important case of possibly anisotropic background or defects.  

The imaging method developed in \cite{tpnguyen} is based on the generalized linear sampling method which was first introduced in  \cite{Audib2015a}, \cite{Audib2014} (see also \cite{CCH}). Sampling methods have been applied to the imaging of many periodic structure, see  \cite{Arens2010},  \cite{Arens2005},  \cite{Bourg2014}, \cite{Elsch2011a}, \cite{Lechl2013b}, \cite{armin},  \cite{nguye2012} for a sample of work. These works assume that the background Green function is computable.  In the case of our problem we do not make use of this Green function.  The main idea in the case of periodic background is to compare imaging functional associated with the full data with the imaging functional associated with single Floquet-Bloch data. The latter plays the role of data associated with a periodic background formed by the real background and the defect repeated periodically. This why our method can be compared to sampling methods using differential measurements as introduced in \cite{Audib2015}. Indeed in our case a single set of measurements is needed. 

The main ingredient in our analysis of the differential sampling method is the study of the new interior transmission problem that appear in the analysis of the single Floquet-Bloch mode sampling method. This problem couples the classical interior transmission problem with scattering problems associated with the other Floquet-Bloch modes. We prove Fredholm property of this problem using the T-coercivity approach \cite{Bonne2011a} and careful estimates on the exponential decay for wave solutions with imaginary wave numbers. As for classical interior transmission problems, the analysis of the anisotropic case is different from the isotropic case since the functional spaces are different. Our theoretical results only apply to the case where the Floquet-Bloch transform is reduced to a finite discrete sum. This corresponds to the case where the problem with defect is also a periodic problem with a different (larger) periodicity than the periodic background. 

Comparing sampling solutions associated with the periodic Green functions one
can design an indicator function of the defect geometry as in \cite{tpnguyen}.
The resulting algorithm is in fact independent from the assumption made in the
analysis on the periodicity of the problem with defect mentioned earlier. The
numerical  indicator function is tested and validated against synthetic
data. We discuss in particular the cases where the defects are inside one of the background inhomogeneous components and the case where it is not.

The paper is organized as follows. We first introduce the direct scattering
problem for anisotropic periodic layers and some key results on the varaitional
formultion and radiation conditions. The inverse problem is introduced in
Section 3 and the classical generalized sampling method is analysed for this
problem. We consider in Section 4 the inverse problem associated with a single
Floquet-Bloch mode and  introduce the new interior transmission problem that
shows up for the analysis of the method. Section 5 is dedicated to the analysis
of this new problem with the help of the T-coercivity approach. The last
section is dedicated to the numerical algorithm that allows us to identify the
geometry of the defect and some validating numerical results.

\section{The Direct Scattering Problem}
The scattering problem we are considering can be formulated in  $\R^d$, $d = 2$
or $d = 3$. A parameter  $L := (L_1, \ldots, L_{d - 1}) \in \R^{d-1}, \ L_j > 0, \
j = 1, \ldots, d-1$ will refer to the periodicity of the background with
respect to the first $d-1$ variables and we need to consider a second (artificial)
parameter $M := (M_1, \ldots, M_{d - 1}) \in
\N^{d-1}$ that refers to the number of periods in the truncated domain. A function defined in $\R^d$ is called
$L$ periodic if it is periodic with
period $L$ with respect to the $d-1$ first variables.    

We then consider the $ML-$periodic Helmholtz
equation (vector multiplications are to be understood component
wise, i.e. $ML =  (M_1L_1, \ldots, M_{d - 1}L_{d - 1})$) where the
  total field $u$ satisfies 
  \begin{equation}\label{C4Eq:Helmh}
	\left\{
	\begin{array}{lc}
		\nabla \cdot \ntm  \nabla u  + k^2 \nte u =  0  \quad \text{in} ~ \R^d \\[1.5ex]
		u ~ \text{is $ML-$periodic}
	\end{array}
	\right.
\end{equation}
and where  $k>0$ is the {\it wave number}.
We denote by  $D$ the support of $\ntm - I$ and $\nte -1$ which is assumed to be such that $\R^d \setminus D$ is connected; $\ntm$ is a $d \times d$ symmetric matrix with $W^{1,\infty}(\R^d)$-entries, $ML-$periodic and such that 
\[
	\ol{\xi} \cdot \Re(\ntm) \xi \geq a_0 |\xi|^2 \quad \text{and} \quad \ol{\xi} \cdot \Im(\ntm) \xi \leq 0
\] for all $\xi \in \C^d$ and some constant $a_0 > 0$. We further assume that the index of refraction {$\nte\in L^{\infty}(\R^d)$} is $ML-$periodic and satisfies $\Re(\nte)\geq \nte_0>0$,  $\Im(\nte)\geq 0$. Furthermore $\ntm = \ntm_p$ and $\nte = \nte_p$ outside a compact domain $\domp$ where $\ntm_p$ is a $d \times d$ matrix with $W^{1,\infty}(\R^d)$-entries and {$\nte_p \in L^\infty(\R^d)$ } such that $\ntm_p$ and $\nte_p$ are $L$-periodic. In addition there exists $h >0$ such that $\ntm  = I$, $\nte =1$ for $|x_d| > h$. Thanks to the $ML-$periodicity, solving equation \eqref{C4Eq:Helmh} in $\R^d$ is equivalent to solving it in the period 
 $$\bigped:=\bigcup_{m \in \Z^{d-1}_M} \Omega_m =  \interd{\boul}{\bour} \times \R $$
  with $ \Omega_m : =
  \interd{-\frac{L}{2} +mL}{ \frac{L}{2} +mL}\times\R$,  $\boul:= \left(\left\lfloor-\frac{M}{2}\right\rfloor + \frac{1}{2}\right)L$, $\bour: =\left(\left\lfloor\frac{M}{2}\right\rfloor + \frac{1}{2}\right) L$, \;  and \; $ \Z^{d-1}_M: = \{ m\in \Zd, \textstyle \left\lfloor-\frac{M_\ell}{2}\right\rfloor+1 \leq m_\ell \leq \left\lfloor\frac{M_\ell}{2}\right\rfloor , \; \ell = 1, \ldots, d-1\}$, where we use the notation $\interd{a}{b} := [a_1, b_1] \times \cdots \times
[a_{d-1}, b_{d-1}]$ and $\lfloor\cdot \rfloor$ denotes the floor function. We also shall use the notation $\normM{a} := |a_1\cdot a_2 \cdots a_{d-1}|$. By the definition of $\Omega_m$, we also have  $\Omega_m = \Omega_0 + mL$. {Without loss of generality we assume that the local perturbation $\domp$ is located in only one period, say $\Omega_0$ (i.e $ m = 0$)}. We call $D_p$ the support of $\ntm_p - I$ and $\nte_p-1$. This implies $D = D_p \cup \domp$ and note that $\ntm = I$ and $\nte=1$ outside $D$. 

We  consider down-to-up or up-to-down  incident plane waves of the form
\begin{equation}\label{inc}
	 u^{i,\pm}(x,j)=\frac{-\i}{2\,\olbetm{j}}e^{\i \alpm{j} \ol{x} \pm  \i \olbetm{j} x_d } 
\end{equation}
where 
$$\quad \textstyle{ \alpm{j} := \frac{2\pi}{ML}j \quad \mbox{ and } \quad  \betm{j} := \sqrt{k^2 - \alpha^2_\sper(j)}}, \quad \Im( \betm{j}) \ge 0, \quad j \in \Zd$$
and $x=(\ol{x},x_d) \in \R^d$. Then the scattered field $u^s = u- u^i$ verifies 
\begin{equation}\label{scat}
	\left\{
	\begin{array}{lc}
		\nabla \cdot A \nabla u^s + k^2 nu^s =  - \nabla
		 \cdot \qtm \nabla u^i  - k^2 \qte u^i \quad \text{in} ~ \R^d, \\[1.5ex]
		u^s ~ \text{is $ML-$periodic}
	\end{array}
	\right.
\end{equation} where  $\qtm$ and $\qte$ are the contrasts defined by 
\[
	\qtm: = \ntm - I \quad \text{and} \quad \qte: = \nte - 1, 
\] $I$ is the $3 \times 3$ identity matrix. {To  ensure that the scattered wave is outgoing, we impose as a radiation condition the Rayleigh expansion}
\begin{equation} \label{C4:RDC}
\left\{
\begin{array}{lc}
	u^s(\ol{x},x_d) = \sum_{\ell \in \Zd} \raycoefp{u^s}{\ell} e^{\i(\alpm{\ell} \ol{x} + \betm{\ell} (x_d-h))},  \quad \forall \  x_d >h, \\[1.5ex]
	u^s(\ol{x},x_d) = \sum_{\ell \in \Zd} \raycoefm{u^s}{\ell} e^{\i(\alpm{\ell}\ol{x} - \betm{\ell} (x_d+h))}, \quad \forall \ x_d <-h,
\end{array}
\right.
\end{equation}
where the Rayleigh coefficients $\raycoefpm{u^s}{\ell}$ are given by
\begin{equation}\label{RayleighCh2}
\left.
\begin{array}{l l}
\raycoefp{u^s}{\ell} := \dsp \frac{1}{|\interd{\boul}{\bour}|} \int_{\interd{\boul}{\bour}} u^{s}(\ol{x}, h) e^{ - \i \alpm{\ell} \cdot \ol{x}} \d{\ol{x}},  \\[2ex]
\raycoefm{u^s}{\ell}:= \dsp \frac{1}{|\interd{\boul}{\bour}|} \int_{\interd{\boul}{\bour}} u^{s}(\ol{x}, - h) e^{ - \i \alpm{\ell} \cdot \ol{x}} \d{\ol{x}}.
\end{array} 
\right.	
\end{equation}
We shall use the notation
 $$\omehm := \interd{\boul}{\bour} \times ]-h,h[$$
$$\gamhmp:= \interd{\boul}{\bour} \times \{h\}, \quad   \gamhmm:=
\interd{\boul}{\bour} \times \{- h\}.$$
For {an} integer $m$, we denote by $H^m_{\sper}(\bigped^h)$ the restrictions to $\bigped^h$ of
functions  that are in $H^m_{\mathrm{loc}}(|x_d| \le h)$ and are
$ML-$periodic. The space $\spatracep{1/2}$ is then defined as the space of
traces on $\gamhmp$ of functions in $H^1_\sper(\bigped^h)$ and the space
$\spatracep{-1/2}$ is defined as the dual of $\spatracep{1/2}$. Similar definitions are used for $\spatracem{\pm 1/2}$. Using the radiation condition \eqref{C4:RDC} we can define the Dirichlet-to-Neumann operators $T^\pm: H^{1/2}_{\#}(\Gamma^{\pm h}_M) \to H^{-1/2}_{\#}(\Gamma^{\pm h}_M)$ as 
\begin{equation}
\label{Def:DtN}
 \phi \mapsto  T^{\pm} \phi: = \i \sum_{\ell \in \Zd} \betm{\ell} \raycoefpm{\phi}{\ell} e^{\i \alpm{\ell} \cdot \ol{x}}
\end{equation}
More generally for a given  $f = (\ftm, \fte) \in L^2(\Omega^h_M)^d \times L^2(\Omega^h_M)$, we  consider the following problem: Find $w \in \spahsm{1}$ satisfying 
\begin{equation}
\label{eq:w}
\nabla \cdot \ntm \nabla w + k^2 \nte w = - \nabla \cdot
 \qtm \ftm - k^2 \qte \fte
\end{equation}
together with the Rayleigh radiation condition (\ref{C4:RDC}). Then we make the following assumption:
\begin{asp} \label{Ass:nk}
The parameters $\ntm$, $\nte$ and the wave-number $k> 0$ are such that  \eqref{eq:w}  with $\ntm$, $\nte$ and  with $\ntm$, $\nte$ replaced by  $\ntm_p$, $\nte_p$ are both well-posed for all $ f = \f \in L^2(\bigped^h)^d \times L^2(\bigped^h) $. 
\end{asp}
\noindent
We remark that the solution  $w \in \spahsm{1}$ of \eqref{eq:w} can be
extended to a function in $\bigped$ satisfying $\nabla \cdot \ntm \nabla \wgg + k^2 \nte \wgg =
- \nabla \cdot \qtm \ftm - k^2\qte \fte$ {in $\R^d$}, using the Rayleigh expansion \eqref{C4:RDC}. We denote by $\grt(x)$ the $ML-$ periodic Green function satisfying $ \Delta \grt + k^2 \grt = - \delta_0$ in $\bigped$ and the Rayleigh radiation condition. Then $w$ has the representation  
\begin{eqnarray}
\label{eq:2forw}
	w(x) &=& \nabla \cdot  \int_{D} \grt(x-y)  \qtm(y) \big( \nabla w +  \ftm \big)(y)   \d{y}   \nonumber \\
	&+& k^2 \int_{D} \grt(x - y)  \qte(y) \big(w +   \fte \big)(y)  \d{y}
\end{eqnarray}

\noindent
 Let $z \in \R^d$ be an arbitrary point, we set {$\Phi(\cdot;z) = \grt(\cdot - z)$ } and recall that it  can be expressed as 
\begin{equation} \label{form:GM}
	{\Phi(x;z) } = \frac{\i}{2ML} \sum_{\ell \in \Z} \frac{1}{\betm{\ell}} e^{\i \alpm{\ell} \ol{(x - z)} + \i\betm{\ell} |x_d - z_d|}.
\end{equation} For latter use, we denote by $\sepmgre{z}: = \{ \coefpmgre{z}{\ell}\}_{\ell \in \Zd}$  the Rayleigh sequences of $\Phi(\cdot,z)$, where the Rayleigh coefficient $\coefpmgre{z}{\ell}$ { is } given by 
\begin{equation}\label{lunch}
	\coefpmgre{z}{\ell}:= \textstyle{\frac{\i}{2\normM{ML} \beta_\sper(\ell)}} e^{-\i (\alpm{\ell} \ol{\zg} - \betm{\ell}|\zg_d \mp h|)} .
\end{equation}

\section{The Inverse Problem}
%
\noindent 
As described above we have two choices of interrogating waves. If  we use  down-to-up (scaled) incident plane waves $\uip(x;j)$ defined by (\ref{inc}), then our measurements (data for the inverse problem) are  given by the Rayleigh sequences 
$$
\raycoefp{u^s}{\ell;j},  \quad  (j, \ell) \in \Zd \times \Zd,  
$$
whereas if we use up-to-down (scaled) incident plane waves $\uim(x;j)$ defined by (\ref{inc}) then our  measurements are given the Rayleigh sequences
$$
\raycoefm{u^s}{\ell;j}, \quad  (j, \ell) \in \Zd \times \Zd.
$$
These measurements  define  the so-called near field (or data) operator which is used to derive the indicator function of the defect. More specifically, let us consider the (Herglotz) operators $\Htm^{\pm}: \ell^2(\Zd) \rightarrow L^2(D)^d \times L^2(D)$ defined by 
\begin{equation} \label{defH}
\Htm^{\pm}\aher:=  \Big(\sum_{j \in \Z} \aher(j) \nabla \uipm(\cdot; j)\big|_D, \sum_{j \in \Z} \aher(j) \uipm(\cdot; j)\big|_D\Big), \quad \forall \, \aher = \{\aher(j)\}_{j \in \Z} \in \ell^2(\Zd). 
\end{equation}
Then $\opeh^{\pm}$ is compact, injective (will be proved later) and its adjoint $\Odual{\opeh^{\pm}}: L^2(D)^d \times L^2(D) \to \ell^2(\Zd) $ is given by 
 \begin{equation}\label{adjointH}
\Odual{\Htm^{\pm}} \fug := \{\hdualcoefpm{\fug}(j)\}_{j \in \Z},  \quad \forall \; \fug = (\fugm, \fuge) \in L^2(D)^d \times L^2(D),
\end{equation} where
\[
 \quad \hdualcoefpm{\fug}_j : = \int_{D} \Big( \fugm(\xg)\cdot \nabla  \ol{\uipm(\cdot; j)}(\xg) + \fuge(\xg)\cdot  \ol{\uipm(\cdot; j)}(\xg)\Big) \d{\xg}.
\]
Let us denote by  $\seth_{\inc}^{\pm}(D)$ the closure of the range of $\opeh^{\pm}$ in $L^2(D)^d \times L^2(D)$. We then consider the (compact) operator $\opeg^{\pm}:\seth_{\inc}^{\pm}(D) \rightarrow \ell^2(\Zd)$ defined by
\begin{equation} \label{defG}
{\opeg^{\pm}} (f) := \{\raycoefpm{w}{\ell}\}_{\ell \in\Zd},
\end{equation}
where $\{\raycoefpm{w}{\ell}\}_{\ell \in\Zd}$ is the Rayleigh sequence of $ \wgg \in\spahsm{1}$ the solution of \eqref{eq:w}. We now define the sampling operators $\open^{\pm}: \ell^2(\Zd) \rightarrow \ell^2(\Zd) $ by
\begin{equation}
\open^{\pm} (\aher) = {\opeg^{\pm}}  \, \opeh^\pm(\aher).
\end{equation}
By linearity of the operators ${\opeg^{\pm}}$ and $\opeh^\pm$ we also get an equivalent definition of $\open^{\pm}$ directly in terms of measurements as
\begin{equation}
[\open^{\pm} (\aher)]_\ell = \sum_{j \in \Zd} a(j) \, \raycoefpm{u^s}{\ell;j} \quad \ell \in \Zd.
\end{equation}
Let us introduce the operator $\Ttm: L^2(D)^d \times L^2(D) \rightarrow L^2(D)^d \times L^2(D)$ defined by 
\begin{equation}  \label{defT}
	\Ttm \fg:= \Big( - \qtm (\fgm + \nabla \wgg|_D), \, k^2 \qte(\fge + \wgg|_{D} \Big), \quad \forall \fg = (\fgm,\fge) \in L^2(D)^d \times L^2(D)
\end{equation}
with $\wgg$ being the solution of \eqref{eq:w}.  We then have the following: 
\begin{lemma}
\label{Ch4:Lem:defG}
	The operators $\Gtm^{\pm}$ defined by \eqref{defG} can be factorized as 
	\[
		\Gtm^{\pm} = \Odual{\Htm^{\pm}} \Ttm.
	\]	
\end{lemma}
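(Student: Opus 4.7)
My plan is to verify the claimed factorization entry by entry: for every $\f \in L^2(D)^d \times L^2(D)$ and every $\ell \in \Zd$, I will compute $[\Gtm^{\pm} \f]_\ell = \raycoefpm{\wgg}{\ell}$ directly from the integral representation \eqref{eq:2forw} of the solution $\wgg$ of \eqref{eq:w}, and then identify the outcome with the $\ell$-th component of $\Odual{\Htm^{\pm}}(\Ttm \f)$ given by \eqref{adjointH}.

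The first step is to substitute the explicit Rayleigh series \eqref{form:GM} of $\Phi(\cdot;y)$ into \eqref{eq:2forw} and restrict $x$ to $\gamhmpm$. Since $D \subset \{|y_d| < h\}$, for $x$ on the measurement line we have $|x_d - y_d| = \pm(x_d - y_d)$, and factoring out $e^{\pm \i \betm{\ell}(x_d \mp h)}$ makes the expansion match the Rayleigh form \eqref{C4:RDC}, identifying the coefficient of mode $\ell$ with exactly $\coefpmgre{y}{\ell}$ from \eqref{lunch}. Because the series converges absolutely and $\Phi(\cdot;y)$ is smooth across $\gamhmpm$, I may differentiate termwise in $x$ to read off the $\ell$-th Rayleigh coefficient of $\nabla_x \Phi(\cdot;y)\big|_{\gamhmpm}$ as $\i(\alpm{\ell},\pm\betm{\ell}) \coefpmgre{y}{\ell} = -\nabla_y \coefpmgre{y}{\ell}$. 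Taking the Rayleigh coefficient of both sides of \eqref{eq:2forw} therefore gives
$$\raycoefpm{\wgg}{\ell} = \int_D \bigl[-\nabla_y \coefpmgre{y}{\ell} \cdot \qtm(y) \bigl(\nabla \wgg + \ftm\bigr)(y) + k^2\, \coefpmgre{y}{\ell}\, \qte(y)\bigl(\wgg + \fte\bigr)(y)\bigr] \d{y}.$$

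The key algebraic identity driving the proof is then a direct comparison of \eqref{lunch} with \eqref{inc}: using $\alpm{\ell}\in \R$ and writing $|y_d \mp h| = h \mp y_d$ for $y \in D$, one obtains $\coefpmgre{y}{\ell} = c_\ell^{\pm} \, \overline{\uipm(y;\ell)}$ with the explicit diagonal prefactor $c_\ell^{\pm} = e^{\i \betm{\ell} h}/\normM{ML}$. The choice of $\overline{\betm{\ell}}$ rather than $\betm{\ell}$ in \eqref{inc} is precisely what is required so that conjugation commutes with the evanescent branches, where $\overline{\betm{\ell}} = -\betm{\ell}$, and no residual conjugates of $\betm{\ell}$ remain. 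Substituting this identity and its $y$-gradient into the representation above, pulling the scalar factor out of the integral, and finally recognizing the two integrand terms as the two components of $\Ttm \f$ from definition \eqref{defT}, yields exactly the formula \eqref{adjointH} for $[\Odual{\Htm^{\pm}}(\Ttm \f)]_\ell$, proving the factorization.

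The only genuinely delicate step is the bookkeeping around the evanescent modes, where $\overline{\betm{\ell}} = -\betm{\ell}$ and the exponentials become real decaying/growing factors; tracking these signs consistently is what makes the identification $\coefpmgre{y}{\ell} \propto \overline{\uipm(y;\ell)}$ hold uniformly in $\ell$. Everything else reduces to termwise manipulation of absolutely convergent Rayleigh series and an integration by parts of $\nabla_x\,\cdot$ in \eqref{eq:2forw}, both of which are routine.
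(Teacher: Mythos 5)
Your proof is correct and follows essentially the same route as the paper's: both take the Rayleigh coefficients of the volume-potential representation \eqref{eq:2forw}, insert the modal expansion \eqref{form:GM}, and identify $\coefpmgre{y}{\ell}$ (and its $y$-gradient) with $\ol{\uipm(y;\ell)}$ (and its gradient) so that the result is recognized as $\Odual{\Htm^{\pm}}\Ttm \f$. The only caveat concerns the diagonal prefactor $c_\ell^{\pm}=e^{\i\betm{\ell}h}/\normM{ML}$ that you correctly isolate: with \eqref{inc} and \eqref{lunch} taken literally it is not equal to $1$, so the factorization holds only up to this harmless diagonal rescaling of the incident waves --- a normalization inconsistency already present in the paper's own proof (which silently asserts $\frac{\i e^{\i\betm{j}h}}{2\betm{j}}e^{-\i\alpm{j}\ol{y}\mp\i\betm{j}y_d}=\ol{\uipm}(y;j)$), not a defect of your argument.
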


\begin{proof} 
Let $\fg = (\fgm, \fge) \in L^2(D)^d \times L^2(D)$ and $\wgg \in \spahsm{1}$ be solution to \eqref{eq:w}. Let us write $T_1(f) := - \qtm (\fgm + \nabla \wgg|_D)$ and $T_2(f): = k^2 \qte(\fge + \wgg|_{D})$.  Then, by definition of the Rayleigh coefficients and combining with the representation of $\grt$ in \eqref{form:GM} and the writing of $\wgg$ as in \eqref{eq:2forw} we  have
\begin{multline*}
	\raycoefpm{\wgg}{j} =  \frac{1}{2ML}\int_{x_d=\pm h} e^{-\i \alpm{j} \cdot\ol{x}} \int_D \sum_{\ell \in \Z} \frac{\alpm{\ell}}{\betm{\ell}} e^{\i \alpm{\ell} \cdot (\ol{x} - \ol{y}) \, + \, \i\betm{\ell} |h \mp y_d|} \cdot(T_1(\fg)(y)) \d{y} \d{\ol{x}} \\ 
	+  \frac{\i}{2ML}\int_{x_d=\pm h} e^{-\i \alpm{j} \ol{x}} \int_D \sum_{\ell \in \Z} \frac{1}{\betm{\ell}} e^{\i \alpm{\ell} (\ol{x} - \ol{y}) \, + \, \i\betm{\ell} |h \mp y_d|} T_2(\fg)(y) \d{y} \d{\ol{x}} \\
 =  \int_D \frac{\alpm e^{\i \betm{j} h}}{2\, \betm{j}}  e^{-\i \alpm{j}\cdot \ol{y} \, \mp \, \i \betm{j} y_d} \cdot T_1(\fg)(y)\d{y}  + \int_D \frac{\i e^{\i \betm{j} h}}{2\, \betm{j}}  e^{-\i \alpm{j}\ol{y} \, \mp \, \i \betm{j} y_d} T_2\fg(y)\d{y}
\end{multline*}
Observing  that 
$$ \frac{\alpm e^{\i \betm{j} h}}{2\,\betm{j}}  e^{-\i \alpm{j}\cdot ol{y} \, \mp \, \i \betm{j} y_d} = \nabla \ol{\uipm}(y;j) \text{ and } \frac{\i e^{\i \betm{j} h}}{2\,\betm{j}}  e^{-\i \alpm{j}y_1 \, \mp \, \i \betm{j} y_2} = \ol{\uipm}(y;j)$$  we then have 
$$\raycoefpm{\wgg}{j} = \int_D \Ttm_1 \fg(y)\cdot \nabla \ol{\uipm}(y;j) +   \Ttm_2 \fg(y) \ol{\uipm}(y;j) \d y,$$ which proves the lemma.
\end{proof}
 The following properties of  $\opeg^{\pm}$ and and $\opeh^\pm$ are crucial to our inversion method.  To state them, we must recall the standard  {\it interior transmission problem}:
 $({\eg}, \vgg) \in
H^1(D) \times H^1(D)$ such that 
\begin{equation} \label{oitp}
\left\{ \begin{array}{lll}
\nabla \cdot (\ntm \nabla {\eg}) + k^2 \nte {\eg} = 0 \quad& \mbox{ in } \; D, 
\\[6pt]
\Delta \vgg + k^2 \vgg = 0  \quad &\mbox{ in } \; D,
\\[6pt]
 {\eg} - \vgg= {\ftd} \quad &\mbox{ on } \; \partial D,
\\[6pt]
\dsp \frac{\partial {\eg}}{\partial \nua} -  \frac{\partial \vgg}{\partial \nu} = {\ftn} \quad &\mbox{ on } \; \partial D,
\end{array}\right.
\end{equation}
for given $({\ftd},\ftn) \in H^{1/2}(\partial D) \times H^{-1/2}(\partial D) $
where $\nu$ denotes the outward normal on $\partial D$ and $\partial u/\partial \nua$ denotes the co-normal derivative, i.e
\[
	\frac{\partial u}{\partial \nua} = \nu \cdot \ntm \nabla u.
\]
 Values of $k$ for which
this problem with $\ftd = 0$ and $\ftn = 0$ has non-trivial solution  are referred to as {\it transmission
eigenvalues}. For our purpose we shall assume that this problem is well posed.  Up-to-date results on this problem can be found in \cite[Chapter 3]{CCH} where in particular one finds sufficient solvability conditions. In the sequel we make the following assumption. 
\begin{asp} \label{HypoLSM}
{$\partial D \cap \partial \Omega_0 =\emptyset$} and the refractive indexes $\ntm$, $\nte$ and the wave number  $k>0$ are such that \eqref{oitp}
has a unique solution.
\end{asp}
\noindent

\subsection{Some key properties of the introduced operators}
We still keep the assumption (that is not essential but simplifies some of the
arguments, and justifies the use of $\Ntm^{+}$ or $\Ntm^{-}$ and not both of
them) 
$$
\bigped \setminus D \mbox{ is connected.}
$$
{In order to avoid repetitions and since the main novelty is in the study of the case of single Floquet Bloch mode, we hereafter indicate without proofs the main properties of the operators $\Htm^\pm$, $G^\pm$ and $T$. These properties can be proved in very similar way as in \cite{} and following the adaptations for periodic probels as in \cite{}. We will prove similar properties for the case of single Floquet-Bloch mode operators and the reader can easily adapt those proofs to the easier case here}
The first step towards the justification of the sampling methods is the characterization of the closure of the range of $\Htm^\pm$.
\begin{lemma} \label{Ch4:lemHerg} 
The operator $\Htm^\pm$ is compact and injective. Let  $\RgHtm_{\inc}^{\pm}(D)$ be  the closure of the range of $\Htm^\pm$ in $L^2(D)^d \times L^2(D)$. Then 
\begin{equation} \label{Ch4:forHinc}
\RgHtm_{\inc}^{\pm}(D) = \{(\fugm,\fuge) = (\nabla \vgg, \vgg)| \;\; v \in H^1(D) \; : \; \Delta \vgg
+ k^2 \vgg = 0 \mbox{ in } D\}.
\end{equation}
Assume that Assumptions \ref{Ass:nk} and \ref{HypoLSM} hold. Then the
operator ${\Gtm^{\pm}}:  \RgHtm_{\inc}(D) \rightarrow \ell^2(\Z)$ defined by (\ref{defG}) is injective with dense range.
\end{lemma}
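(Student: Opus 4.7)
First I would establish the assertions about $\opeh^\pm$ and then the two about $\opeg^\pm$, treating the latter as quasi-periodic adaptations of well-known linear sampling arguments.

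\textbf{Compactness and injectivity of $\opeh^\pm$, and the characterization of $\RgHtm^\pm_{\inc}(D)$.}  For $|j|$ sufficiently large, $\betm{j}$ is purely imaginary with $|\betm{j}|$ of order $|j|$, so the quasi-periodic plane waves $u^{i,\pm}(\cdot;j)$ decay exponentially in $|j|$ on the compact strip containing $D$ (once the normalizing factor in (\ref{inc}) is incorporated). Consequently the partial sums obtained by truncating to $|j|\le N$ yield finite-rank operators that converge in operator norm to $\opeh^\pm$, giving compactness. For injectivity, if $\opeh^\pm a = 0$ then the series $V(x):=\sum_{j\in\Zd} a_j u^{i,\pm}(x;j)$ converges in $H^1_{\loc}$ of a strip containing $D$, satisfies Helmholtz there, and vanishes on $D$; unique continuation yields $V\equiv 0$, and reading $V$ as a Fourier series in $\ol{x}$ at fixed $x_d$ forces $a_j=0$ for every $j$. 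The characterization of $\RgHtm^\pm_{\inc}(D)$ follows a classical Herglotz density argument: the inclusion of the range in $\{(\grad v,v):v\in H^1(D),\ \Delta v+k^2 v=0\}$ is immediate since each $u^{i,\pm}(\cdot;j)$ solves Helmholtz; for the reverse I would identify $(\mathrm{Range}\,\opeh^\pm)^\perp$ with $\ker(\opeh^\pm)^*$, associate to any $\varphi$ in this kernel the volume potential built from $\Phi$, use (\ref{form:GM}) to convert the vanishing of all adjoint components into the vanishing of all Rayleigh coefficients of this potential, and finally apply unique continuation together with a Green's identity in $D$ to conclude that $\varphi$ is orthogonal to every pair $(\grad v,v)$ coming from a Helmholtz solution.

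\textbf{Injectivity of $\opeg^\pm$.}  Let $f=(\grad v,v)\in \RgHtm^\pm_{\inc}(D)$ (by the previous characterization) with $\opeg^\pm f=0$, and let $\wgg$ be the solution of (\ref{eq:w}). Vanishing of all Rayleigh coefficients on $\gamhmpm$ gives $\wgg\equiv 0$ for $|x_d|>h$, and since $\omem\setminus\ol{D}$ is connected, unique continuation yields $\wgg\equiv 0$ throughout $\omem\setminus\ol{D}$; in particular $\wgg|_{\pa D}$ and $\pa \wgg/\pa\nua|_{\pa D}$ both vanish. Using $\qtm=\ntm-I$ and $\qte=\nte-1$, a direct substitution shows that $u:=\wgg+v$ together with $v$ solves the homogeneous interior transmission problem (\ref{oitp}) with data $\ftd=\ftn=0$; Assumption \ref{HypoLSM} then gives $u\equiv v\equiv 0$ in $D$, so $f=0$.

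\textbf{Dense range of $\opeg^\pm$.}  I would show $\ker((\opeg^\pm)^*)=\{0\}$. Let $b\in\ell^2(\Zd)$ be orthogonal to the range. By the factorization $\opeg^\pm=(\opeh^\pm)^*\Ttm$ of Lemma \ref{Ch4:Lem:defG}, the orthogonality rewrites as $\langle \Ttm f,\opeh^\pm b\rangle = 0$ for every $f\in \RgHtm^\pm_{\inc}(D)$. Setting $\tilde v(x):=\sum_j b_j u^{i,\pm}(x;j)$, so that $(\grad\tilde v|_D,\tilde v|_D)=\opeh^\pm b$, and specializing $f=(\grad v_0,v_0)$ for an arbitrary Helmholtz solution $v_0$ in $D$, a Green's identity argument in $D$ combined with the radiating extension to $\omem\setminus\ol{D}$ of the scattered field $\wgg$ associated with $v_0$ recasts the orthogonality as a homogeneous interior transmission problem solved by $\tilde v$ and an auxiliary function. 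Assumption \ref{HypoLSM} then forces $\tilde v\equiv 0$ in $D$, and the injectivity of $\opeh^\pm$ already established yields $b=0$.

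The step I expect to be the main obstacle is the duality argument underpinning the dense range of $\opeg^\pm$: one must carefully relate the orthogonality $\langle \Ttm f,\opeh^\pm b\rangle=0$ to the well-posedness of an adjoint transmission problem, track the effect of the conjugation of $\betm{j}$ in the definition of $u^{i,\pm}$, and exploit the injectivity of the forward problem granted by Assumption \ref{Ass:nk}. The other ingredients — compactness of $\opeh^\pm$, density of Herglotz families in Helmholtz solutions, and the interior transmission uniqueness — are periodic counterparts of arguments available in \cite{CCH}, which is consistent with the authors' indication that the detailed proofs parallel the single Floquet–Bloch analysis to follow.
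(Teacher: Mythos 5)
The paper itself disposes of this lemma by citing Lemmas 3.3 and 3.5 of \cite{Thi-Phong3}, and most of your sketch — compactness via exponentially decaying evanescent modes, injectivity of $\Htm^\pm$ by unique continuation and Fourier orthogonality, the range characterization via injectivity of the adjoint on Herglotz pairs, and injectivity of $\Gtm^\pm$ by reducing to the homogeneous problem \eqref{oitp} — is exactly the standard route that reference (and the single Floquet--Bloch analogues proved later in this paper, Lemma \ref{lemHergq} and Theorem \ref{Lem:injectiveG_q}) follow. One caveat on the first step: as literally printed, \eqref{inc} carries only the algebraic factor $1/(2\olbetm{j})$, and for evanescent indices the factor $e^{\pm\i\olbetm{j}x_d}$ actually \emph{grows} like $e^{|\betm{j}|h}$ on part of the strip; the exponential decay you invoke holds only with the normalization at $x_d=\pm h$ that is implicit in the proof of Lemma \ref{Ch4:Lem:defG} (equivalently, with the extra factor $e^{\i\betm{j}h}$ appearing there), so you should say which normalization you are using rather than attributing the decay to "the normalizing factor in \eqref{inc}".

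The genuine gap is in the dense-range step, which you yourself flag. The identity $\langle \Ttm f,\opeh^\pm b\rangle=0$ for all $f$ is, after Green's identities, the statement $\int_{\partial D}\bigl[(\nu\cdot\ntm\nabla u_0)\,\ol{\tilde v}-u_0\,\ol{\partial_\nu\tilde v}\bigr]\,ds=0$ for every total field $u_0=v_0+\wgg$; a single such relation is one scalar equation and does not by itself produce a pair solving \eqref{oitp}. To "recast this as a homogeneous interior transmission problem solved by $\tilde v$ and an auxiliary function" you would additionally need (i) a density result for the Cauchy data $(u_0|_{\partial D},\nu\cdot\ntm\nabla u_0|_{\partial D})$ among all interior solutions of $\nabla\cdot\ntm\nabla u+k^2\nte u=0$, which is a separate lemma, and (ii) to observe that the transmission problem you obtain involves the \emph{conjugated} coefficients $\ol{\ntm},\ol{\nte}$, whose well-posedness is not what Assumption \ref{HypoLSM} grants when $\ntm,\nte$ are complex. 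Neither point is addressed. A much shorter argument is available from material already in the paper: by the first part of Theorem \ref{Ch4:TheoG} (whose proof uses only Assumption \ref{HypoLSM}), $\sepgre{z}\in\Rang(\Gtm^{+})$ for every $z\in D$; since $\coefpmgre{z}{\ell}=\normM{ML}^{-1}\,\ol{\uipm(z;\ell)}$ by \eqref{lunch}, any $b\perp\Rang(\Gtm^{+})$ satisfies $\sum_\ell b(\ell)\,\uipm(z;\ell)=0$ for all $z\in D$, i.e.\ the Herglotz wave with kernel $b$ vanishes on $D$, and the injectivity of $\Htm^{+}$ you have already established gives $b=0$. I would replace your duality argument by this one.
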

\begin{proof}
The compactness and the injectivity of the operators $\Htm^\pm$ and the operators $\Gtm^{\pm}$ directly follow from Lemma 3.3 and Lemma 3.5 in \cite{Thi-Phong3}. 
\end{proof}

\noindent  Let $q$ be a fixed parameter in $\ZM$, we denote by $\greq{z}$ the {outgoing fundamental solution that verifies} 
\begin{equation}\label{phiq}
\Delta \greq{z} + k^2 \greq{z} =  -\delta_{z}\qquad \mbox{in} \;\Omega_0
\end{equation}
 and which is $\alpha_q$ quasi-periodic with period $L$ {with $\alpha_q :=  2\pi  q/ (ML)$}. Then $ \greq{z}$ has the expansion 
  \begin{equation} \label{form:phiq}
	\greq{z} = \frac{\i}{2ML} \sum_{\ell \in \Z} \frac{1}{\betm{ q+ M\ell}} e^{\i \alpm{a + M\ell} \ol{(x - z)} + \i\betm{q+M\ell} |x_d - z_d|}.
\end{equation}The Rayleigh coefficients $\sepmgreq{z}$ of $\greq{z}$ are given by 
\begin{equation}\label{hhh}
	\hspace*{-0.5cm}\coefpmgreq{z}{j} =  \left \{
\begin{array}{cl} 
   \textstyle{\frac{\i}{2\normM{L} \betm{q + M\,\ell}}} e^{-\i (\alpm{q + M\,\ell} \ol{\zg} - \betm{q + M\,\ell}|\zg_d \mp h|)}  & \, \mbox{if} \, j = q + M \ell, \; \ell \in \Zd, \\[1.5ex]
 0 &\, \mbox{if} \, j \neq q + M \ell, \;  \ell \in \Zd.
\end{array}
 \right.
\end{equation}

\noindent We now  prove one of the main ingredients for the justification of
the inversion methods discussed below. 

\begin{theorem} \label{Ch4:TheoG}
 For $z \in \R^d$, $\sepmgre{z}$
belongs to $\Rang(\Gtm^{\pm})$ if and only if $\zg \in D$ and $\sepmgreq{z}$
belongs to $\Rang(\opeg^{\pm})$ if and only if $\zg \in D_p$, where $q$ is a
fixed parameter in $\Z_M$.
\end{theorem}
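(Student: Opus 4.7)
The proof follows the classical sampling-method strategy: combine the interior transmission problem \eqref{oitp} with a unique continuation argument in $\R^d\setminus D$. In both equivalences the ``if'' direction is established by constructing an element of $\RgHtm_{\inc}^\pm(D)$ from an ITP solution whose boundary data come from the relevant fundamental solution, while the ``only if'' direction is obtained by localizing the singularities of that same fundamental solution.

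\emph{If direction.} Assume $z\in D$ (resp. $z\in D_p$). The singularities of $\gre{z}$ reduce to $\{z\}$, while those of $\greq{z}$ form the discrete set $\{z+mL:m\in\Zd\}$. In both cases all these singularities lie inside $D$: for $\gre{z}$ this is immediate; for $\greq{z}$ it follows from the $L$-periodicity of $D_p$, which yields $z+mL\in D_p\subset D$ for every $m$. Hence the restrictions of $\gre{z}$ (resp. $\greq{z}$) to $\partial D$, together with their co-normal derivatives, form admissible Dirichlet/Neumann data for \eqref{oitp}. Under Assumption~\ref{HypoLSM} the ITP admits a unique solution $(u,v)\in H^1(D)\times H^1(D)$, and by Lemma~\ref{Ch4:lemHerg} the pair $f:=(\nabla v, v)$ lies in $\RgHtm_{\inc}^\pm(D)$. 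Set $\tilde w := u-v$ in $D$ and $\tilde w := \gre{z}$ (resp. $\greq{z}$) in $\bigped\setminus D$. The ITP transmission conditions, combined with $\ntm=I$, $\nte=1$ outside $D$ and $\Delta v+k^2 v=0$, show that $\tilde w$ is $ML$-periodic, $H^1_{\mathrm{loc}}$ across $\partial D$, solves \eqref{eq:w} with source $f$, and satisfies the Rayleigh radiation condition. Well-posedness (Assumption~\ref{Ass:nk}) yields $w=\tilde w$, so the Rayleigh coefficients of $w$ coincide with those of the Green function, i.e. $\Gtm^\pm f=\sepmgre{z}$ (resp. $\sepmgreq{z}$).

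\emph{Only if direction.} Conversely, suppose $\sepmgre{z}$ (resp. $\sepmgreq{z}$) belongs to $\Rang(\Gtm^\pm)$, and let $w$ be the associated solution of \eqref{eq:w}. Both $w$ and the relevant Green function satisfy the Helmholtz equation outside $D$ and away from the Green function's singularities, and share the same Rayleigh coefficients, so they coincide for $|x_d|>h$. Unique continuation on the connected open set $\R^d\setminus D$ then forces equality throughout $\R^d\setminus D$. Since $w\in H^1_{\mathrm{loc}}(\R^d)$ is regular in $\R^d\setminus D$, every singularity of the Green function must lie in $D$. For $\gre{z}$ this gives $z\in D$. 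For $\greq{z}$, if $z\notin D_p$ then $L$-periodicity yields $z+mL\notin D_p$ for every $m\in\Zd$; because the defect $\domp$ is contained in the single cell $\Omega_0$, for any $m\ne 0$ the translate $z+mL$ also lies outside $\domp$, hence $z+mL\in\R^d\setminus D$, contradicting the regularity of $w$ there.

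The main subtlety lies in the quasi-periodic case: one must keep precise control of the infinitely many singularities of $\greq{z}$. The two geometric ingredients that make the argument work are the $L$-periodicity of $D_p$ (used in the ``if'' direction to trap every translate of $z$ inside $D$ when $z\in D_p$) and the hypothesis $\domp\subset\Omega_0$ (used in the ``only if'' direction to exhibit a singularity outside $D$ when $z\notin D_p$). The $H^1$-matching of $\tilde w$ across $\partial D$ in the ``if'' direction is a routine consequence of the ITP transmission conditions.
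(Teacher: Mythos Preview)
Your proof is correct and follows the same route as the paper's: build the scattered field from an ITP solution with boundary data coming from the fundamental solution for the ``if'' direction, and invoke unique continuation plus the $H^1_{\mathrm{loc}}$-singularity of that fundamental solution for the ``only if'' direction. The paper dispatches the second equivalence in one line (``same lines, replacing $\Phi(\cdot;z)$ by $\Phi_q(\cdot;z)$''), whereas you spell out how the $L$-periodic array of singularities of $\Phi_q$ interacts with $D_p$ and $\omega$; this added detail is helpful and is exactly what the paper's one-liner is hiding.

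One small slip in your quasi-periodic ``only if'' argument: the claim that $z+mL\notin\omega$ for \emph{every} $m\neq 0$ is only literally true when $z\in\Omega_0$. Either state that reduction explicitly (it is harmless by the $\alpha_q$-quasi-periodicity of $\Phi_q$), or weaken the claim to ``for at least one $m$ in $\ZM$'', which is all the contradiction requires and which holds because $\omega$ occupies a single cell while (for $M\neq 1$) there are at least two translates.
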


\begin{proof}
We now prove that $\sepmgre{z}$
belongs to $\Rang(\Gtm^{\pm})$ if and only if $\zg \in D$. We first observe that
 $\sepgre{z}$ is the Rayleigh sequence of
  $\gre{z}$ satisfying $\Delta {\gre{z}} + k^2 {\gre{z}}= -
  \delta_z \mbox{ in } \bigped$ and the Rayleigh radiation condition. Let $\zg \in D$. We
  consider $(\eg, \vgg) \in H^1(D) \times  H^1(D) $ as being  the 
solution to \eqref{oitp} with 
\begin{equation} \label{Ani:continuity}
\ftd(\xg) = \Phi(\xg;z)  \mbox{ and } \ftn(\xg) =
\partial \Phi(\xg;z)/\partial \nu(\xg) \;\; \mbox{ for } \xg \in \partial D.
\end{equation}
We then define $\wgg$ by 
$$\begin{array}{lcl}
\wgg(\xg) = &\eg(\xg) - \vgg(\xg) & \mbox{in } D, \vspace{2mm}\\
\wgg(\xg) =  &\Phi(\xg;z)  & \mbox{in } \bigped \setminus D.
\end{array}
$$
Due to (\ref{Ani:continuity}), we have that $\wgg \in \spahsmloc{1}$
and satisfies (\ref{eq:w}). Hence $\Gtm^{+} \vgg = \sepgre{z}$. 

\vspace{0.05in}

\noindent Now let $\zg \in \bigped \setminus D$. Assume that there exists $\fug = (\nabla \fg, \fg)\in \RgHtm_{\inc}(D)$
such that $\Gtm^{+} \fug =\sepgre{z}$. This implies that $\wgg =  \gre{z}$ in $\{x \in \bigped, \pm x_d \geq h \}$ where
$\wgg$ is the solution to \eqref{eq:w}. By the unique continuation principle we deduce that $\wgg =  \gre{z}$ in $\bigped \setminus D$ . This gives a contradiction since   $\wgg
\in H^1_{\sper,\loc}(\bigped \setminus D) $ while $\gre{z} \notin
H^1_{\sper,\loc}(\bigped \setminus D) $.

\medskip

\noindent The proof of the statement $\sepmgreq{z}$
belongs to $\Rang(\opeg^{\pm})$ if and only if $\zg \in D_p$ follows the same
lines as above replacing $\Phi(\cdot; z)$ by  $\Phi_q(\cdot; z)$. The reader
can also refer to the proof of Lemma 4.7 in \cite{Thi-Phong3}.
\end{proof}

\begin{lemma} \label{Ch4:Lem:T}
Assume that Assumptions 1 and 2 hold.  Then the operator  $\Ttm: L^2(D)^d \times L^2(D) \to L^2(D)^d \times L^2(D)$ defined by \eqref{defT} satisfies 
\begin{equation}
 \Im \left( \Ttm \phi, \phi \right) \geq 0, \quad \forall \phi \in \RgHtm_{\inc}(D).
\end{equation}
 Assume
in addition that $\xi \cdot \qtm \xi \geq \sigma_{\nte} > |\xi|^2 \, \text{ in} \; D$ (respectively $ - \xi \cdot \qtm \xi \geq \sigma_{\nte} > |\xi|^2 \, \text{ in} \; D$) and $k$ is not a transmission eigenvalue. Then $ - \Re \Ttm = \Ttm_0 + \Ttm_1$, where $\Ttm_0$ (respectively $-\Ttm_0$) is self-adjoint and coercive and $\Ttm_1$ is compact on $\RgHtm_{\inc}(D)$. Moreover, $\Ttm$ is injective on $\RgHtm_{\inc}(D)$.
\end{lemma}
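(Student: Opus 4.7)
My plan is to follow the T-coercivity framework, deriving an energy identity for $(\Ttm\phi,\phi)$ by Green's formulas and then exploiting the structure of the interior transmission problem. \emph{Setup.} Given $\phi=(\nabla v,v)\in\RgHtm_\inc(D)$ with $v\in H^1(D)$ satisfying $\Delta v+k^2 v=0$ in $D$, let $w$ denote the associated solution of \eqref{eq:w} extended to $\bigped$ by the Rayleigh expansion, and set $u:=v+w|_D$. A direct manipulation using $\ntm=I+\qtm$ and $\nte=1+\qte$ shows that $u$ satisfies $\nabla\cdot\ntm\nabla u+k^2\nte u=0$ in $D$, while $w$ satisfies $\Delta w+k^2 w=0$ in $\bigped\setminus D$ with Rayleigh radiation; the distributional reading of \eqref{eq:w} across $\partial D$ furnishes the jump relations $u-v=w$ and $\partial_\nua u-\partial_\nu v=\partial_\nu w|_+$ on $\partial D$. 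By definition, $(\Ttm\phi,\phi)_{L^2} = -\!\int_D \qtm\nabla u\cdot\overline{\nabla v}+k^2\!\int_D \qte u\overline{v}$.

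\emph{Positivity of the imaginary part.} I test the scattering equation of $w$ on $\omehm$ against the conjugate of $u$ in $D$ and of $w$ in $\omehm\setminus D$, integrate by parts in each piece, combine the boundary terms on $\partial D$ via the jump relations, and reduce the boundary contributions on $\gamhmpm$ via the Dirichlet-to-Neumann operator $T^\pm$. After identifying the resulting combination with $(\Ttm\phi,\phi)$, one obtains a representation of the form
\begin{equation*}
(\Ttm\phi,\phi) = -\!\int_{\omehm}\!\ntm\nabla w\cdot\overline{\nabla w} + k^2\!\int_{\omehm}\!\nte|w|^2 + \langle T^\pm w,w\rangle_{\gamhmpm} - \!\int_D \qtm\nabla v\cdot\overline{\nabla v} + k^2\!\int_D \qte|v|^2
\end{equation*}
up to a purely real boundary contribution (involving $\int_{\partial D}\partial_\nu v\,\overline v$, which is automatically real since $v$ is a Helmholtz solution in $D$). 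Taking $\Im$, the sign hypotheses $\Im\ntm\le 0$, $\Im\nte\ge 0$ (hence $\Im\qtm\le 0$, $\Im\qte\ge 0$) together with $\Im\langle T^\pm w,w\rangle\ge 0$ --- only propagating Floquet-Bloch modes contribute, each with non-negative weight $\beta_\sper(\ell)$ --- show every term has the correct sign, yielding $\Im(\Ttm\phi,\phi)\ge 0$.

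\emph{Decomposition of $-\Re\Ttm$.} Taking the real part of the identity above, the $v$-only contribution $\Re\!\int_D\qtm\nabla v\cdot\overline{\nabla v}-k^2\qte|v|^2$ is, under the hypothesis $\xi\cdot\Re\qtm\,\xi\ge\sigma_\nte|\xi|^2>|\xi|^2$, coercive in the $\nabla v$ component of $\phi$; this defines the self-adjoint operator $T_0$. Since $k$ is not a transmission eigenvalue, a Fredholm/T-coercivity argument for \eqref{oitp} in the spirit of \cite{Bonne2011a} controls $\|v\|_{L^2(D)}$ by $\|\nabla v\|_{L^2(D)}$ modulo a compact perturbation, so the coercivity of $T_0$ extends to the $L^2(D)^d\times L^2(D)$ norm on $\RgHtm_\inc(D)$. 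The remaining contributions to $-\Re\Ttm$ involve $w$: by Assumption~\ref{Ass:nk} the solution map $\phi\mapsto w$ is continuous from $L^2(D)^d\times L^2(D)$ to $H^1_\sper(\omehm)$, and composing with Rellich's compact embedding $H^1(D)\hookrightarrow L^2(D)$ together with compactness of the trace operators yields compactness of $T_1$. The opposite-sign contrast case is treated identically after sign reversal, with $-T_0$ in the coercive role.

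\emph{Injectivity and main obstacle.} If $\Ttm\phi=0$, then pointwise $\qtm(\nabla v+\nabla w|_D)=0$ and $\qte(v+w|_D)=0$ a.e.\ in $D$. Invertibility of $\qtm$ (from the coercivity of $\Re\qtm$) forces $\nabla u=0$ in $D$, so $u$ is constant on each connected component of $D$; combined with $\qte u=0$ and the non-degeneracy of $\qte$ on each component this gives $u\equiv 0$ in $D$. Hence $w|_D=-v$, and the function $\tilde v:=v$ on $D$, $\tilde v:=-w$ on $\bigped\setminus D$, has continuous Dirichlet and Neumann traces across $\partial D$ by the jump relations, belongs to $\spahsmloc{1}$, and satisfies $\Delta\tilde v+k^2\tilde v=0$ in $\bigped$ with Rayleigh radiation. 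Assumption~\ref{Ass:nk} then forces $\tilde v\equiv 0$, so $\phi=0$. The principal technical obstacle is the careful derivation of the displayed energy identity --- handling the jump relations on $\partial D$ so that the spurious terms cancel, and verifying that the residual boundary integral is indeed real --- and, in the decomposition step, ensuring that the compact perturbation does not absorb the coercive part (which is exactly what forces the strict contrast hypothesis $\sigma_\nte>1$).
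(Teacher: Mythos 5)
Your energy identity for the imaginary part is in the right spirit (it is the same Green's-identity/DtN mechanism the paper uses to get \eqref{Ch4:lasttpn3}), and your injectivity argument, though different from the paper's (the paper goes through the factorization $\Gtm^{+}=\Odual{\Htm^{+}}\Ttm$, unique continuation, and the non-eigenvalue assumption for \eqref{oitp}), can be made to work: note only that you should not invoke ``non-degeneracy of $\qte$'' (which is not assumed) — once $\qtm$ invertible gives $\nabla u=0$ in $D$, the equation $\nabla\cdot\ntm\nabla u+k^2\nte u=0$ forces $u=0$ directly. Be aware, though, that the $w$-block of your displayed identity carries the wrong sign: computing the cross term $\int_D-\qtm\nabla w\cdot\overline{\nabla v}+k^2\qte w\overline v$ via Green's identity for \eqref{eq:w} produces $+\overline{\int_{\omehm}\ntm\nabla w\cdot\overline{\nabla w}}-k^2\overline{\int_{\omehm}\nte|w|^2}-\overline{\langle T^{\pm}w,w\rangle}$, not the negatives you wrote. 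This happens not to matter for $\Im(\Ttm\phi,\phi)\ge 0$ (conjugation flips the sign of the imaginary part back), but it is fatal for the real part, which is where the genuine gap lies.

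The gap is in the decomposition $-\Re\Ttm=\Ttm_0+\Ttm_1$. You assign every $w$-dependent term to the ``compact'' part $\Ttm_1$, justified by Rellich and trace compactness. That works for $\int_{\omehm}|w|^2$ and for the DtN terms (where $H^2$ regularity away from $D$ gives compactness of the trace), but it fails for the quadratic form $\phi\mapsto\int_{\omehm}\ntm\nabla w_\phi\cdot\overline{\nabla w_\phi}$: the right-hand side of \eqref{eq:w} is only in $H^{-1}$ when $\ftm\in L^2(D)^d$, so the solution map $\phi\mapsto w_\phi$ is bounded but \emph{not} compact from $L^2(D)^d\times L^2(D)$ into $H^1(\omehm)$, and hence $\phi\mapsto\nabla w_\phi$ is not compact into $L^2$. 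Worse, with the corrected sign this term enters $-\Re\Ttm$ as $-\int_{\omehm}\Re(\ntm)\nabla w\cdot\overline{\nabla w}\le 0$, so it can be absorbed neither into a coercive $\Ttm_0$ nor into a compact $\Ttm_1$. This is precisely why the paper works with the rearrangement \eqref{Ch4:lasttpn3}, in which the only gradient contributions to $-\Re\Ttm$ are $\int_D\Re(\qtm)|\ftm+\nabla w|^2$ and $\int_{\omehm}|\nabla w|^2$, both non-negative when $\qtm$ is positive definite and jointly coercive in $\ftm$ (since $\|\ftm+\nabla w\|^2+\|\nabla w\|^2\gtrsim\|\ftm\|^2$); these are placed in $\Ttm_0$ together with $\int_D|\fte|^2$, and only zero-order terms in $w$, traces away from $D$, and $\fte=v$ (compact on the subspace $\RgHtm_{\inc}(D)$ of Helmholtz solutions) remain in $\Ttm_1$. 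Relatedly, your $\Ttm_0$ as defined is only ``coercive modulo a compact perturbation'' in the $\fte$-variable, which does not meet the statement's requirement that $\Ttm_0$ itself be coercive; the standard fix is to add $\|\fte\|_{L^2(D)}^2$ to $\Ttm_0$ and move $-k^2\Re(\qte)|\fte|^2-|\fte|^2$ into $\Ttm_1$, using that $(\nabla v,v)\mapsto v$ is compact on $\RgHtm_{\inc}(D)$.
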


\begin{proof}
 Let $\fug = (\fugm,\fuge) \in L^2(D)^d \times L^2(D)$ and $\wgg_\fug$ be
 solution to \eqref{eq:w} associated with $ \fg = \fug$.  By definition of
 the operator $\Ttm$ we have 
\begin{multline} \label{Ch4:lasttpn1}
	 \left( \Ttm \fug, \fug \right)_{L^2(D)^d\times L^2(D)} =    \int_{D}  - \qtm (\fugm + \nabla \wgg_\fug) \cdot \ol{\fugm} + k^2 \qte(\fuge + \wgg_{\fug}) \ol{\fuge}  \d{\xg}\\
	 = -  \int_{D} \Big(\qtm |\fugm+ \nabla \wgg_\fug|^2  - k^2 \qte|\fuge + \wgg_\fug|^2 \Big) \d{\xg} \\+  \int_{D} \Big(\qtm(\nabla \wgg_\fug + \fugm) \cdot \nabla \ol{\wgg_\fug}   - k^2 \qte (\fuge + \wgg_\fug) \ol{\wgg_\fug} \Big)\d{\xg}.
\end{multline} Integrating $\int_{D} \qtm(\nabla \wgg_\fug + \fugm) \cdot \nabla \ol{\wgg_\fug}   - \qte (\fuge + \wgg_\fug) \ol{\wgg_\fug} \d{\xg}$ by part and by writing $ \Delta \wgg_{\fug} + k^2 \wgg_{\fug} = -\nabla \cdot \qtm (\fugm+ \nabla \wgg_\fug) - k^2 \qte (\fuge+ \wgg_\fug)$ we have 
\begin{multline} \label{Ch4:lasttpn2}
	   \int_D \qtm (\nabla \wgg_\fug + \fugm) \cdot \nabla \ol{\wgg_\fug}   - \qte (\wgg_{\fug} + \fuge) \wgg_{\fug} \d{\xg} \\ = \langle T^+ {\wgg_\fug},  {\wgg_\fug}\rangle +  \langle T^- {\wgg_\fug}, {\wgg_\fug} \rangle  - \int_{\bigped^h} |\nabla \wgg_\fug|^2 - k^2  |\wgg_\fug|^2 \d{\xg},
\end{multline}
where $T^{\pm}$ be the Dirichlet-to-Neumann  operators defined in \eqref{Def:DtN}.  Therefore, substituting \eqref{Ch4:lasttpn2} into \eqref{Ch4:lasttpn1} we end
up with:
\begin{multline} \label{Ch4:lasttpn3}
	 \left( \Ttm \fug, \fug \right)_{L^2(D)^d \times L^2(D)} =  \int_{D} - \qtm |\fugm + \nabla \wgg_\fug|^2 + k^2 \qte|\fuge + \wgg_\fug|^2  \d{\xg} \\
	  - \int_{\bigped^h} (|\nabla \wgg_\fug|^2 - k^2  |\wgg_\fug|^2) +  \langle T^+{\wgg_\fug}, {\wgg_\fug}\rangle  + \langle T^- {\wgg_\fug}, {\wgg_\fug} \rangle
\end{multline}
Thanks to the non-negative sign of the imaginary part of $T^\pm$ and the assumption $\Im (\nte) \geq 0$  we deduce that
\begin{equation*}
	\Im  \left( \Ttm \fug, \fug \right) =  \int_D \Im(\nte)|\fuge +
        \wgg_\fug|^2 \d{\xg} +  \Im  \langle T^{+}{\wgg_\fug},
        {\wgg_\fug}\rangle  + \langle T^- {\wgg_\fug}, {\wgg_\fug} \rangle \geq 0. 
\end{equation*}
For the case $ \qtm$ positive definite on $D$ one can define $T_0 : L^2(D)^d \times L^2(D) \to L^2(D)^d \times L^2(D)$ by
$$
\left( T_0 \fug, \psi \right)_{L^2(D)^d \times L^2(D)} := \int_{D} \qtm (\fugm + \nabla
\wgg_\fug)\cdot \overline{ (\psi_1 + \nabla
\wgg_\psi)} + \fuge \overline \psi_2 \d{\xg}  + \int_{\bigped^h} (\nabla
\wgg_\fug \cdot \overline{ \nabla \wgg_\psi} )\d{\xg} 
$$
which is indeed a selfadjoint and coercive operator. Using \eqref{Ch4:lasttpn3} one then
deduces that $-T +T_0 : \RgHtm_{\inc}(D) \to L^2(D)^d \times L^2(D)$ is compact by the $H^2$ regularity outside
$D$ of $\wgg_\fug$ and the Rellich compact embedding theorem. Observe that we
used that the operator is restricted to $\RgHtm_{\inc}(D) $ to infer
compactness of the terms involving $\fug_2$ in the expression of $(-T
+T_0)(\fug)$.

\medskip

\noindent For the case $\qtm$ negative definite on $D$ we first observe that
\eqref{Ch4:lasttpn1} and \eqref{Ch4:lasttpn2} also lead to
\begin{multline} \label{Ch4:lasttpn5}
\left( \Ttm \fug, \fug \right)_{L^2(D)^d \times L^2(D)} =  \int_{D}  -\qtm
|\fugm |^2 + \int_{\bigped^h}  \ntm |\nabla \wgg_\fug|^2 + 2i \int_{D}
-\qtm \Im( \nabla \wgg_\fug \cdot \fugm ) \\ + \int_{D} k^2 \qte(\fuge +
\wgg_\fug) (\overline{\fuge} -
\overline{\wgg_\fug})  \d{\xg}  
	  - \int_{\bigped^h} k^2  |\wgg_\fug|^2 \d{\xg}  -  \langle
          T^+{\wgg_\fug}, {\wgg_\fug}\rangle  - \langle T^- {\wgg_\fug},
          {\wgg_\fug} \rangle.
\end{multline}
We then define $T_0 : L^2(D)^d \times L^2(D) \to L^2(D)^d \times L^2(D)$ by
$$
\left( T_0 \fug, \psi \right)_{L^2(D)^d \times L^2(D)} := \int_{D} - \qtm\fugm
\overline{\psi_1}  + \fuge \overline \psi_2 \d{\xg}  + \int_{\bigped^h} \ntm (\nabla
\wgg_\fug \cdot \overline{ \nabla \wgg_\psi} )\d{\xg}
$$
which is also selfadjoint and coercive. Using \eqref{Ch4:lasttpn5} one 
deduces using the same arguments as in the previous case that $T -T_0 :
\RgHtm_{\inc}(D) \to L^2(D)^d \times L^2(D)$ is compact.

 \medskip
 
\noindent In the case $k$ is not a transmission eigenvalue, the injectivity of $\Ttm^+$ is implied for instance by Assumption \ref{HypoLSM} and the
factorization $\Gtm^{+} = \Odual{\Htm^{+}} \, \Ttm$: Assume that $\fug =
(\nabla \fg, \fg)\in  \RgHtm_{\inc}(D)$ and $T\fug = \Big(-\qtm(\nabla \fg +
\nabla \wgg_\fug) , \, k^2 \qte (\fg + w_\fug) \Big) = 0$. This implies, using
the factorization $\Gtm^{+} = \Odual{\Htm^{+}} \, \Ttm$ that
$\raycoefp{w_\fug}{j} =  0$ for all $j \in \Z$ and therefore $w_\fug = 0$ in
$\bigped \setminus D$ (by unique continuation {principle}). With $\fug = (\nabla
\fg,\fg) \in \RgHtm_{\inc}(D)$ and $\fg$ verifying $\Delta \fg + k^2 \fg = 0$
in $D$ we get that $u: = \fg + w_\fug$ and $v:=\fg$ satisfying the interior
transmission problem \eqref{oitp} with $\varphi = \psi = 0$. We then deduce
that $u = v = 0$. This proves the injectivity of the operator $\Ttm$. 
\end{proof}
\noindent
Another main ingredient is a symmetric factorization of  an appropriate operator given in terms of $\open^{\pm}$. To this end, for a generic operator $F:H\to H$, where $H$ is a Hilbert space, with adjoint $F^*$ we define
\begin{equation}\label{sharp}
 	\opef_{\sharp} := |\Re(\opef) |+ |\Im(\opef) |
 \end{equation}
 where \; $\Re(\opef) := \frac{1}{2}
\left(\opef +\Odual{\opef}\right)$, \; $\Im(\opef) := \frac{1}{2\i}
\left(\opef - \Odual{\opef}\right)$.  We then have the following: 
\begin{theorem} \label{TheoFactorization}
	Assume that the hypothesis of Lemma \ref{Ch4:Lem:T} hold true. Then we have the following factorization 
	\begin{equation} \label{titifactN}
		\open_{\sharp}^{\pm} = \Odual{\opeh^{\pm}} \, \opet_{\sharp} \, \opeh^\pm,
	\end{equation} where $\opet_{\sharp}: L^2(D) \to L^2(D)$ is self-adjoint and coercive on $\seth_{\inc}(D)$.  
\end{theorem}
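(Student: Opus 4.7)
The plan is to deduce the factorization by combining the identity $\open^{\pm} = \Odual{\opeh^{\pm}} \opet \opeh^{\pm}$ established in Lemma~\ref{Ch4:Lem:defG} with the abstract symmetric factorization theorem of Kirsch type that underlies all versions of the factorization method. In its abstract form, one needs: (a) a factorization $\opef = \Odual{\Asm} \opet \Asm$ in a Hilbert space; (b) $\Asm$ compact, injective with dense range in a closed subspace $V$; (c) the restriction of $\opet$ to $V$ is injective; (d) $\Im (\opet \phi,\phi) \geq 0$ for all $\phi \in V$; (e) a decomposition $\pm\Re \opet = \opet_0 + \opet_1$ with $\opet_0$ self-adjoint and coercive on $V$ and $\opet_1$ compact. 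The abstract conclusion is exactly that $\opef_\sharp = \Odual{\Asm} \opet_\sharp \Asm$ with $\opet_\sharp$ self-adjoint and coercive on $V$, which matches \eqref{titifactN}.

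Each ingredient required by this abstract statement has already been established. Taking $\Asm = \opeh^{\pm}$, $\opef = \open^{\pm}$ and $V = \seth_{\inc}^{\pm}(D)$, Lemma~\ref{Ch4:lemHerg} supplies the compactness and injectivity of $\opeh^{\pm}$ as well as the density of its range in $\seth_{\inc}^{\pm}(D)$, while Lemma~\ref{Ch4:Lem:T} supplies the three properties of $\opet$ on the restricted subspace: the non-negativity of $\Im (\opet\phi,\phi)$, the Garding-type decomposition $-\Re \opet = \opet_0 + \opet_1$ (respectively $+\Re\opet = \opet_0 + \opet_1$ when $\qtm$ is negative definite), and the injectivity of $\opet$ on $\seth_{\inc}^{\pm}(D)$, whose proof there explicitly exploits the factorization $\Gtm^{\pm} = \Odual{\opeh^{\pm}} \opet$ together with Assumption~\ref{HypoLSM}. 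Plugging these into the abstract theorem yields \eqref{titifactN}.

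The main subtlety in carrying the abstract machinery over to our concrete setting is that the source space $L^2(D)^d \times L^2(D)$ is a product and the operator $\opet$ couples both components through $\wgg_\fug$, so the candidate selfadjoint coercive part $\opet_0$ of Lemma~\ref{Ch4:Lem:T} is only coercive on the subspace $\seth_{\inc}^{\pm}(D)$, not on the full product space; this is precisely why the conclusion of the theorem has to be stated with coercivity on $\seth_{\inc}^{\pm}(D)$ and why the proof of Lemma~\ref{Ch4:Lem:T} restricts to this subspace in order to absorb the terms involving $\fug_2$ into the compact remainder via the $H^2$-interior regularity of $\wgg_\fug$ outside $D$ and Rellich's theorem. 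One also has to treat separately the cases where $\qtm$ is positive or negative definite, since the sign of the coercive part of $\Re\opet$ switches; however $\opet_\sharp = |\Re \opet|+|\Im \opet|$ is insensitive to this sign change, so the final conclusion is identical in both cases.
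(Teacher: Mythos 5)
Your proposal is correct and follows exactly the route the paper intends: the paper states Theorem \ref{TheoFactorization} without proof, relying on the standard abstract $\opef_{\sharp}$-factorization theorem of the generalized linear sampling framework, whose hypotheses are precisely the properties established in Lemma \ref{Ch4:lemHerg} (compactness, injectivity, dense range of $\opeh^{\pm}$ in $\seth_{\inc}^{\pm}(D)$) and Lemma \ref{Ch4:Lem:T} (sign of $\Im\opet$, decomposition of $\Re\opet$ into a coercive self-adjoint part plus a compact part, and injectivity of $\opet$ on $\seth_{\inc}^{\pm}(D)$). Your remarks on the restriction to the subspace $\seth_{\inc}^{\pm}(D)$ and on the sign-insensitivity of $\opet_{\sharp}=|\Re\opet|+|\Im\opet|$ to the definiteness of $\qtm$ are also accurate.
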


\noindent For latter use, we assume that each period of $D_p$ is composed by $J \in \N$ disconnected components and the defect $\domp$ may contain or have non-empty intersection with at least one component (recall that $\domp$ assume to be located in $\Omega_0$). For convenience, we now introduce some additional notations.  We denote by $\Dpint$ the union of the components of $D_p \cap \Omega_0$ that have nonempty intersection with $\domp$,  and by $\Dpout$ its complement in $D_p\cap\Omega_0$, i.e the union of all the components of $D_p \cap \Omega_0$ that do not intersect $\omega$. Furthermore, we denote by $\Dint: = \Dpint \cup \domp$ and by $\Dtot : = \Dint \cup \Dpout$. Obviously,  $\Dtot = D \cap \Omega_0$. (see  Fig. \ref{Fig:ML-periodic} and note that if $\domp$ does not intersect with $D_p$ then $\Dpint \equiv \emptyset$, $\Dpout \equiv D_p \cap \Omega_0$ and $\Dint = \domp$). We consider the following $ML$-periodic copies of the aforementioned regions
\begin{equation}
\label{nota}	
 \Dpoutp =  \bigcup_{m \in \Z_M} \Dpout + mL,   \quad  \Dint_p := \bigcup_{m \in \Z_M} \Dint + mL  \quad \text{and}  \quad  \Dtot_p := \bigcup_{m \in \Z_M} \Dtot + mL
\end{equation} Remark that $\Dtot_p \equiv D_p \cup \big(\cup_{m \in \Z_M} \domp + mL\big)$ contains $D$ and the $L$-periodic copies of $\domp \setminus D_p$. {We remark that $n = n_p = 1$ in $\Dtot_p \setminus D$.}
\begin{figure}[htpb]
\centerline{\begin{tikzpicture}[scale=0.9]
\draw[blue,thick,dashed] (-6.5,1.9) -- (10.5,1.9);
\draw[blue,thick,dashed] (-6.5,-1.7) -- (10.5,-1.7);
\draw[blue,thick] (-6,-3.2) -- (-6,2.2); 
\draw[blue,thick] (-2,-2) -- (-2,2.2);
\draw[blue,thick] (2,-2) -- (2,2.2); 
\draw[blue,thick] (6,-2) -- (6,2.2);
\draw[blue,thick] (10,-3.2) -- (10,2.2);


\fill[color=red!60, fill=red!50, very thick](-4.,-0.7) rectangle (-3.,0.3);
\fill[color=red!60, fill=red!50, very thick](0,-0.7) rectangle (1.,0.3);
\fill[color=red!60, fill=red!50, very thick](4.,-0.7) rectangle (5.,0.3);
\fill[color=red!60, fill=red!50, very thick](8.,-0.7) rectangle (9.,0.3); 

\fill[color=red!60, fill=red!50, very thick](-5.,0.9) circle (0.75);
\fill[color=red!60, fill=red!50, very thick](-1.,0.9) circle (0.75);
\fill[color=red!60, fill=red!50, very thick](3.,0.9) circle (0.75);
\fill[color=red!60, fill=red!50, very thick](7.,0.9) circle (0.75);

\fill[color=blue!60, fill=blue!40, very thick](-0.2,1.2) circle (0.4);

\small
\draw[blue] (0,2.3) node {$ n = n_p = 1$};
\draw[blue] (-6.4,1.4) node {$ h$};
\draw[blue] (-6.6, -1.2) node {$ -h$};
\draw[black] (-0.2,1.2) node {$ \domp$};
\draw[blue](-1.1,0.7)node {$\Dpint$};

\draw[blue] (0.4,-0.3) node {$ \Dpout$}; 

\draw[blue] (1.6,-1.4) node {$ \Omega_0$};

\draw [blue,ultra thin,<->] (-2,-1.9) -- (2,-1.9);  \draw[blue,thick,dashed] (0,-2.1) node {$L$};

\draw[black] (- 0.6,-2.7) node {$ \Dint: = \Dpint \cup \domp$,}; 

\draw[black] (2.6,-2.7) node {$ \Dtot: = \Dint \cup \Dpout$}; 

\draw [red,thin,<->] (-6,-3.2) -- (10,-3.2);  \draw[red,thick,dashed] (0,-3.6) node {$ ML$};

\end{tikzpicture}}
\caption{Sketch of the geometry for the $ML-$periodic problem with the notations.}
\label{Fig:ML-periodic}
\end{figure}
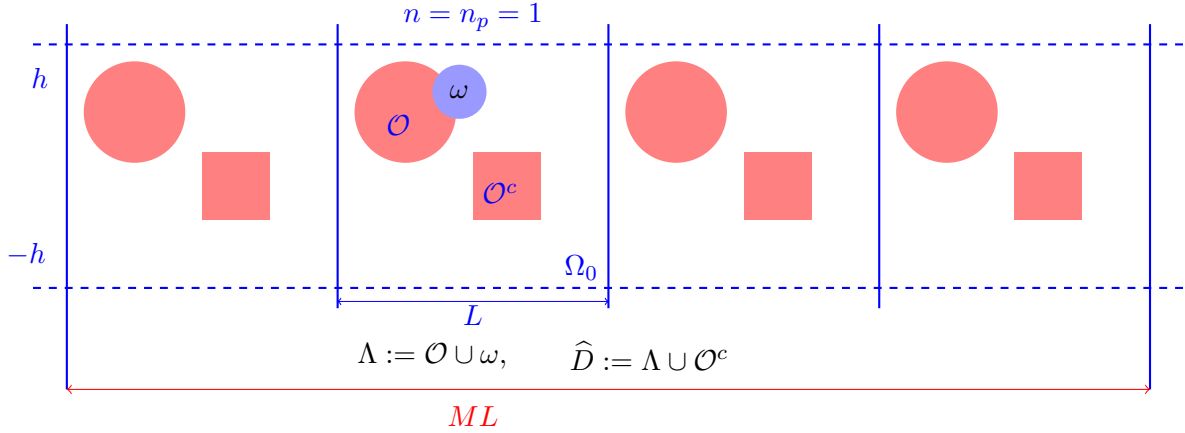

\section{The Near Field Operator for a Single Floquet-Bloch Mode}

\noindent
Our goal is to derive an imaging method that resolves only $\omega$ without knowing or recovering $D_p$. This leads us to introducing the sampling operator for a single Floquet-Bloch mode whose analysis will bring up a new interior transmission problem. We start with the definition of a quasi-periodic function. 
\begin{definition}
 A function $u$ is called quasi-periodic with parameter $\xi = (\xi_1, \cdots ,\xi_{d - 1})$ and period $L = (L_1, \cdots , L_{d-1})$, with respect to the first $d-1$ variables (briefly denoted as $\xi-$quasi-periodic with period $L$) if:
\begin{equation*}
	u(\ol{x} + (jL), x_d) = e^{i\xi \cdot (jL)} u(\ol{x}, x_d), \quad \forall j \in \Z^{d-1}.
\end{equation*} 
\end{definition}
%
%
\noindent Let $\aher \in \ell^2(\Zd)$, we define for $q \in
\Z^{d-1}_M$, the element $a_q \in \ell^2(\Zd)$ by
\[ 
	\aher_q(j) := \aher(q + jM).
\]
We then define the operator $\opei_q : \ell^2(\Zd) \to \ell^2(\Zd)$, which transforms $\aher \in \ell^2(\Zd)$ to $\tilde \aher \in \ell^2(\Zd)$ such that 
\begin{equation}\label{iop}
\tilde{\aher}_q = \aher\quad \mbox{and} \quad \tilde{\aher}_{q'} = 0 \; \mbox{ if } \; q \neq q'. 
\end{equation}
We remark that $\opei^{*}_q (\aher) = \aher_q$, where $\opei^{*}_q: \ell^2(\Zd)
\to \ell^2(\Zd)$ is the dual of the operator $\opei_q$. 
The single Floquet-Bloch mode Herglotz operator $\opeh^{\pm}_q: \ell^2(\Zd) \to L^2(D)$
is defined by 
\begin{equation} \label{2defHq}
	\opeh^{\pm}_q \aher := \opeh^{\pm} \opei_q \aher = \sum_{j} \aher(j) \uipm(\cdot; q + jM)|_{D}
\end{equation}
and the  single Floquet-Bloch mode  near field (or data) operator $\open^{\pm}_q:
\ell^2(\Zd) \to \ell^2(\Zd)$  is defined by 
\begin{equation} \label{2defNq} 
	\open^{\pm}_q \,\aher  = \opei^{*}_{q} \, \open^{\pm} \,  \opei_q \, \aher.
\end{equation} 
We remark that $\opeh^{\pm}_q \aher$ is an $\alpha_q-$quasi-periodic function
with period $L$. The sequence $\open^{\pm}_q \,\aher $ corresponds to the
Fourier coefficients of the $\alpha_q-$quasi-periodic component of the
scattered field in the decomposition \eqref{for:decompw}. This operator is then somehow
associated with $\alpha_q-$quasi-periodicity. One immediately sees from the factorization $\open^{\pm} = (\opeh^{\pm})^{*} \,  \opet \,
\opeh^{\pm}$ that the following factorization holds.
\begin{equation} \label{2FactfNq} 
	\open^{\pm}_q = (\opeh^{\pm}_q)^{*} \, \opet \, \opeh^{\pm}_q.
\end{equation}
For later use we also define the operator  $\opeg^{\pm}_q: \ol{\Rang(\opeh^{\pm}_q)} \to \ell^2(\Zd)$  by 
\begin{equation} \label{2defGq} 
	 \opeg^{\pm}_q = (\opeh^{\pm}_q)^{*} \opet |_{\ol{\Rang(\opeh^{\pm}_q)} }
\end{equation} where the operator $\opet$ is defined by \eqref{defT}. 
\begin{lemma}
\label{lemHergq}
The operator $\opeh^{\pm}_q$ is injective and 
\begin{multline*}
\ol{\Rang(\opeh^{\pm}_q)} = \seth^q_{\inc}(D) := \big\{(\fugm,\fuge) = (\nabla \vgg, \vgg)| \;\; v \in H^1(D) \; : \;   \Delta v + k^2 v = 0 \; \mbox{in } D \;  \mbox{and} \\
\; v|_{D_p} \; \mbox{is $\alpha_q-$quasi-periodic}\big\}.
\end{multline*}
\end{lemma}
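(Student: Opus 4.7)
My plan is to treat injectivity and the two range inclusions separately, following the approach used for $\opeh^{\pm}$ in Lemma~\ref{Ch4:lemHerg} and \cite{Thi-Phong3}, but adapted to exploit the $\alpha_q$-quasi-periodic structure induced by restricting to the Floquet-Bloch mode indexed by $q \in \ZM$.

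For injectivity, I would introduce $\vgg(x) := \sum_{j \in \Zd} \aher(j) \uipm(x; q+jM)$. Since $\alpm{q+jM} = (2\pi q)/(ML) + (2\pi j)/L$, this $\vgg$ is $\alpha_q$-quasi-periodic in $\ol{x}$ with period $L$ and solves $\Delta \vgg + k^2 \vgg = 0$ on $\R^d$. From $\opeh^{\pm}_q \aher = 0$ one reads off $\vgg \equiv 0$ and $\nabla \vgg \equiv 0$ on the open set $D$, so unique continuation for the Helmholtz equation yields $\vgg \equiv 0$ on $\R^d$. Expanding the $L$-periodic function $e^{-\i \alpha_q \cdot \ol{x}}\vgg$ in a Fourier series in $\ol{x}$ and identifying coefficients to zero then forces $\aher(j)=0$ for every $j \in \Zd$, since the scalar factor $-\i/(2\olbetm{q+jM})$ never vanishes.

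The forward inclusion $\ol{\Rang(\opeh^{\pm}_q)} \subseteq \seth^q_{\inc}(D)$ is handled by passing to the limit: each non-closed range element $(\nabla \vgg, \vgg)|_D$ arises from a $\vgg$ that is globally $\alpha_q$-quasi-periodic with period $L$ and solves Helmholtz on $\R^d$, and the $L^2(D)^d \times L^2(D)$-limit preserves the Helmholtz equation in $D$ (by distributional stability and elliptic regularity) together with the pointwise relation $\vgg(\ol{x} + mL, x_d) = e^{\i \alpha_q \cdot mL}\vgg(\ol{x}, x_d)$ on the $L$-translates composing $D_p$ inside $\bigped$, Assumption~\ref{HypoLSM} ensuring that traces on $\partial \Omega_0$ do not interfere.

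The reverse inclusion $\seth^q_{\inc}(D) \subseteq \ol{\Rang(\opeh^{\pm}_q)}$ is the main obstacle and requires a duality argument. I would assume $\fug = (\fugm, \fuge) \in L^2(D)^d \times L^2(D)$ is orthogonal to $\Rang(\opeh^{\pm}_q)$ and introduce the dual volume potential
\[
W(x) := \int_D \big[\ol{\fugm(y)} \cdot \nabla_y \Phi_q(x; y) + \ol{\fuge(y)} \, \Phi_q(x; y)\big] \, dy,
\]
which is $\alpha_q$-quasi-periodic in $\ol{x}$ with period $L$, solves Helmholtz in $\bigped \setminus \ol{D}$, and satisfies the Rayleigh radiation condition. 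Using the explicit expansion \eqref{form:phiq} of $\Phi_q$, each Rayleigh coefficient of $W$ in $\{\pm x_d > h\}$ matches, up to a nonzero normalization, one of the pairings from the orthogonality hypothesis and therefore vanishes; a Rellich-type argument for $\alpha_q$-quasi-periodic radiating solutions then gives $W \equiv 0$ on $\bigped \setminus \ol{D}$. Finally, testing this $W$ against an arbitrary $\vgg \in \seth^q_{\inc}(D)$ via Green's identity in $D$---after extending $\vgg$ $\alpha_q$-quasi-periodically from its values on $D_p \cap \Omega_0$ to the remaining $L$-translates inside $D_p$---delivers the required orthogonality. The delicate points will be (i) using the quasi-periodic Green's function $\Phi_q$ rather than the $ML$-periodic $\grt$, so that the nonvanishing Rayleigh coefficients of $W$ align exactly with the indices $q + M\ell$; (ii) the Rellich/unique-continuation step forcing $W \equiv 0$ outside $D$; and (iii) the quasi-periodic extension of $\vgg$, for which Assumption~\ref{HypoLSM} (giving $\partial D \cap \partial \Omega_0 = \emptyset$) is essential in order to glue pieces coherently across cells.
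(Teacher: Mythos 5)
Your overall strategy is the same as the paper's: characterize $\ol{\Rang(\opeh^{\pm}_q)}$ through the adjoint, build a volume potential with kernel $\Phi_q$ from the annihilating element, observe that its Rayleigh coefficients all vanish (those with index $j\neq q+M\ell$ automatically, by \eqref{form:phiq}--\eqref{hhh}, the others by the orthogonality hypothesis), kill the potential outside the scatterer by unique continuation, and finish with a Green/energy identity that exploits quasi-periodicity. Your injectivity argument and the forward inclusion are fine (the paper disposes of injectivity in one line via injectivity of $\opeh^{\pm}$ and $\opei_q$). The difference is that the paper only needs to show the adjoint is injective on $\seth^q_{\inc}(D)$ (given the forward inclusion this already forces equality of the closed subspaces), so its potential is built from a density $(-\nabla f,f)$ with $f$ itself a Helmholtz solution, which makes the final identity sign-definite; you instead take a general $L^2$ annihilator and must pair it against every $v\in\seth^q_{\inc}(D)$. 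That route is legitimate but harder, and it is exactly where your argument has a gap.

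The gap is the claim that $W\equiv 0$ on $\bigped\setminus\ol D$. The kernel $\Phi_q(\cdot;y)$ is the quasi-periodization of the free Green function: it carries a source at every translate $y+mL$, $m\in\ZM$, so for densities supported in $D$ the potential $W$ satisfies $\Delta W+k^2W=0$ only in the complement of $\Dtot_p$ (the union of the quasi-periodic copies of $\Dtot$), not in $\bigped\setminus\ol D$. Unique continuation from $\{\pm x_d>h\}$ therefore gives only $W\in H^1_0(\Dtot_p)$; on the translated pieces $(\domp\setminus D_p)+mL$, $m\neq 0$, the function $W$ neither solves the Helmholtz equation nor vanishes. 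Consequently your concluding Green identity cannot be taken ``in $D$'': it has to be taken over $\Dtot_p$, and the contributions from $\Dtot_p\setminus D$ must be folded back onto $\Omega_0$ using the $\alpha_q$-quasi-periodic extension of the test function (with $D_p$ counted once and the copies of $\domp\setminus D_p$ producing a factor $\normM{M}$). This unfolding, together with the case distinction $\domp\subset D_p$ versus $\domp\not\subset D_p$, is the actual content of the paper's proof, and it is precisely what your phrase ``delivers the required orthogonality'' leaves unproven. Two smaller points: (i) with the conjugated densities $\ol{\fugm},\ol{\fuge}$ the Rayleigh coefficients of $W$ are conjugates of pairings against $\uipm(\cdot;q+M\ell)$, whereas the orthogonality hypothesis kills the pairings against $\ol{\uipm(\cdot;q+M\ell)}$; since $\ol{\uipm(\cdot;j)}$ is not a multiple of an incident mode for evanescent indices, you should use the unconjugated densities, as in the computation of Lemma \ref{Ch4:Lem:defG}; (ii) the boundary term $\int_{\partial\Dtot_p}\partial_\nu W\,\ol{\tilde v}$ does not vanish merely because $W\in H^1_0(\Dtot_p)$, so the final identity should be run in weak (distributional) form rather than through the classical second Green identity.
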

\begin{proof} 
$\Htm^{\pm}_q$ is injective since $\Htm^{\pm}$ is injective and $\opei_q$ is injective. We now prove that $(\Htm^{\pm}_q)^{*}$ is injective on $\RgHtm^q_{\inc}(D)$. Let $\fug = (\nabla \fg,\fg) \in \RgHtm^q_{\inc}(D)$ and assume $(\Htm^{\pm}_q)^{*} (\varphi) = 0$. We define
\[
		u(x): = \frac{1}{M} \nabla \cdot \int_{D} \Phi_q(x - y) \big(-\nabla{\fg}(y)\d{y}\big) + \frac{1}{M}\int_{D} \Phi_q(x - y) \fg(y)\d{y}. 
\] From the expansion of $\Phi_q(x)$ as in \eqref{form:phiq}  and using the same calculations as in the proof of Lemma \ref{Ch4:Lem:defG} we have that $  \raycoefpm{u}{j} = 0$ for all  $j \neq q + M\ell$ and $ \raycoefpm{\eg}{q+M\ell} = \left(\Odual{\Htm^{\pm}}(\varphi)\right)(q+M\ell) =((\Htm^{\pm}_q)^{*} (\varphi))(\ell) = 0$. Therefore $u$ has all Rayleigh coefficients equal 0, which implies that 
\[ 
	u = 0, \quad \; \mbox{for} \; \pm x_d > h.
\]
We now observe that  for all $y \in D$, $\Delta \Phi_q(\cdot ; y) +
k^2 \Phi_q(\cdot ; y) = 0$ in the complement of $ \Dtot_p$. This implies that
$$\Delta u + k^2 u = 0 \quad \text{in } \; \R^d \setminus \Dtot_p. $$ 
Using a unique continuation argument we infer that $ u = 0$ in $\bigped
\setminus \Dtot_p$. Therefore, $u \in H^1_0(\Dtot_p)$ by the
regularity of volume potentials. We now consider two cases:

\bigskip

\noindent{\it If \; $\domp \subset D_p$}, then $\Dtot_p \equiv D_p$, i.e $u \in H^1_0(D_p)$. Moreover, by definition, $u$ verifies $\Delta u + k^2 u = \Delta f -f$ in $D_p$. Since $u \in H^1_0(D_p)$ and $\Delta f + k^2f = 0$ in $D_p$, we then have
\begin{equation}
0 = \int_{D_p} ( \Delta u +k^2 u) \ol{f} \d{x}  = \int_{D_p} ( -\Delta f + k^2 f) \ol{f} \d{x}  = \int_{D_p} ( k^2 +1)|f|^2 \d{x}
\end{equation}
This proves that $f = 0$, which yields the injectivity of $(\opeh^{\pm}_q)^{*}$ on $\seth^q_{\inc}(D)$.

\medskip

\noindent{\it If \; $\domp \not \subset D_p$}, let denote by  $\cdomp: = \domp \setminus D_p$ then $\cdomp \neq \emptyset$. 
Since $\varphi|_{D_p}$ and $\Phi_q$ are $\alpha_q-$quasi-periodic
functions with period $L$, we then have for $x \in D_p \cap \Omega_m$.
\begin{align}
\label{eq:2DpOm}
		u(x) = &\frac{1}{\normM{M}}\nabla \cdot \int_{\cdomp} \Phi_q(x ; y) (-\nabla \fg(y))\d{y} + \frac{1}{\normM{M}}\int_{\cdomp} \Phi_q(x ; y) \fg(y)\d{y} \nonumber \\
                & + \nabla \cdot \int_{D_p \cap \Omega_m} \Phi_q(x ; y) (-\nabla \fg(y))\d{y}  
                + \int_{D_p \cap \Omega_m} \Phi_q(x ; y) \fg(y)\d{y} 
\end{align}
Recall that $\Delta \Phi_q(\cdot ; y) + k^2 \Phi_q(\cdot; y) = -\delta_y$ in
$D_p \cap \Omega_m$ and $\Delta \Phi_q(\cdot ; y) + k^2 \Phi_q(\cdot; y) = 0$ in
$\cdomp$, we then obtain from \eqref{eq:2DpOm} that  for $m\in \Z^{d-1}_{M}$,
\begin{equation}\label{Ch4:fintas1}
\Delta u(x) + k^2 u(x) =  \Delta \fg(x) -  \fg(x) \mbox{ in } D_p \cap \Omega_m. 
\end{equation}
Let us set for $x \in
\cdomp + mL$, $m\in \ZM:$
$$ \fg_m(x) := e^{\i \alpha_q \cdot mL} \varphi(x - mL).$$ Using the  $\alpha_q-$quasi-periodicity of $\Phi_q$, we have for $x
                \in \cdomp+ mL$,
\begin{align*}
u(x): = &\frac{1}{\normM{M}}\nabla \cdot\int_{\cdomp+ mL} \Phi_q(x ; y) (-\nabla \fg_m(y))\d{y} +\frac{1}{\normM{M}}\int_{\cdomp+ mL} \Phi_q(x ; y) \fg_m(y)\d{y}  \\
                &+\frac{1}{\normM{M}}
                \nabla \cdot \int_{D_p} \Phi_q(x ; y) (-\nabla  \fg(y))\d{y}  +\frac{1}{\normM{M}} \int_{D_p} \Phi_q(x ; y) \fg(y)\d{y} .
\end{align*} Moreover, in this case\, $\Delta \Phi_q(\cdot ; y) + k^2 \Phi_q(\cdot; y) = -\delta_y$ in $ \cdomp + mL$ 
 and $\Delta \Phi_q(\cdot ; y) + k^2 \Phi_q(\cdot; y) = 0$ in  $D_p \cap \Omega_m$ then
\begin{equation}\label{Ch4:fintas2}
	\Delta u(x) + k^2 u(x) = \Delta \fg_m  -  \fg_m  \mbox{ in } \cdomp + mL.
\end{equation} We now define the function $\widetilde \fg \in H^2(\Dtot_p)$ by 
$$
\widetilde \fg = \fg \mbox{ in } D_p \mbox{ and }\widetilde \fg =
\fg_m \mbox{ in }  \cdomp + mL, \, m \in \ZM.$$
 Then $\widetilde \fg$ satisfies  \; $\Delta \widetilde \fg + k^2 \widetilde \fg = 0 \mbox{ in }\Dtot_p.$ Since  $u \in H^1_0(\Dtot_p)$ then according to  \eqref{Ch4:fintas1} and \eqref{Ch4:fintas2}  we have 
\begin{align*}
0 = 	\int_{\Dtot_p} (\Delta u + k^2 u)  \ol{\widetilde \fg} &= \int_{D_p}\big( \Delta  \fg - \fg\big) \ol{\fg} \d{x} +  M \int_{\cdomp}\big(\Delta \fg -  \fg\big) \ol{\fg}  \\
&=  \int_{D_p} ( k^2 +1)|f|^2 \d{x} + M\int_{\cdomp} ( k^2 +1)|f|^2 \d{x} 
\end{align*}
(remind that $f = \fc$ in $D$), which implies $\fg = 0$ in $D$. This proves the injectivity of $\Odual{\Htm^{\pm}}$ on $\RgHtm^q_{\inc}(D)$ and hence proves the Lemma.
\end{proof}

\noindent 
We now see that $ \varphi(j;\ol{x}): =e^{\i \alpm{j} \ol{x} }  = e^{\frac{2\pi}{ML}j \ol{x}},  j\in \Z $ is a Fourier basic of $ML$ periodic function in $L^2(\bigped)$, for that any  $w \in L^2(\bigped)$ which is $ML$ periodic, has the expansion 
\begin{equation}
	w(x) = \sum_{j \in \Z}\widehat{w}(j,x_d) \varphi(j;\ol{x}), \quad \text{where} \quad \widehat{w}(j,x_d) := \frac{1}{\normM{ML}} \int_{\bigped} w(x) \ol{\varphi(j;\ol{x})} \d{\ol{x}}.
\end{equation} Splitting index $j$ by module $M$ as $j = q + M\ell$, for $q \in \ZM$ and $\ell \in \Z$, and  then arranging the previous sum of $w$, we obtain a finite sum with respect to $q$,
\begin{equation}
\label{for:decompw}
w = \sum_{q\in \Z_M} w_q,
\end{equation} where $w_q: = \sum_{\ell \in \Z} \widehat{w}(q+M\ell,x_d) \varphi(q+M\ell; \ol{x})$ is $\alpha_q-$quasi-periodic with period $L$, here $\alpha_q: = \frac{2\pi}{L}q$. Thus any $ML-$periodic  function $w \in L^2(\bigped)$ can be decomposed 
 where $w_q$ is $\alpha_q-$quasi-periodic with period $L$. Moreover, by the orthogonality of the Fourier basic $\{ \varphi(j; \cdot) \}_{j \in \Z}$, we have that 
\begin{equation}
\raycoefpm{w_q}{j} = 0 \quad \text{if} \; j \neq q+ M\ell, \; \ell \in \Z \qquad \text{and} \qquad \raycoefpm{w}{q+M\ell} = \raycoefpm{w_q}{q+M\ell}
\end{equation} where $\raycoefpm{w_q}{j}$ the Rayleigh sequence of $w_q$ defined in \eqref{RayleighCh2}. Coming back to the definition of $\opeg^{\pm}_q$, we see that $\opeg^{\pm}_q(f)$ is a Rayleigh sequence of $\raycoefpm{w}{j}$ at all indices $j = q + M\ell, \, \ell \in \Z$, where $w$ is solution of \eqref{eq:w}. Seeing also the line above that theses coefficients come from the Rayleigh sequence of $w_q$ where $w_q$ is one of the component of $w$ using the decomposition \eqref{for:decompw}, which is $\alpha_q-$ quasi periodic.  Let $\fug:=(\fugm, \fuge) = (\nabla f, f) \in \seth^q_{\inc}(D)$, we then introduce the $\alpha_q-$quasi-periodic function $\fugc: = (\nabla \fc, \fc)$ where $\fc$ is given by  
\begin{equation}
\label{def:ftilde}
	\fc: =  \left\{\begin{array}{ll}
		f &\quad \text{in} \quad \bigped \setminus \Dint_p  \\ [1.5ex]
		e^{i \alpha_q mL} f|_{\Dint} &\quad \text{in} \quad \Dpint + mL, \quad \forall \; m\in \Z_M.
\end{array}	
\right.	
\end{equation} then $f$ and $\fc$ (respectively $\fug$ and $\fugc$) coincide in $D$. Therefore equation \eqref{eq:w}  with data  $\fug = (\nabla f, f) \in \seth^q_{\inc}(D) $ is equivalent to 
\begin{equation}
\label{rewrite:eqw}
	\nabla \cdot \ntm \nabla w + k^2 \nte w = - \nabla \cdot
 \qtm \nabla \fc - k^2 \qte \fc
\end{equation}
Using the decomposition \eqref{for:decompw} for $w$, and that fact that $n_p$ and $\ntm_p$ are periodic, $\varphi$ is $\alpha_q-$quasi-periodic and $n - n_p$ and $\ntm - \ntm_p$ are compactly supported in one period $\Omega_0$, equation \eqref{rewrite:eqw} becomes
\begin{equation*} 
	\nabla \cdot \ntm_p \nabla  w_q + k^2n_p w_q  = \nabla \cdot (\ntm_p - \ntm) \nabla w + k^2(n_p - n) w  - \nabla \cdot \qtm \nabla \fc -  k^2 \qte \fc \; \;  \text{in} \; \; \Omega_0. 
\end{equation*} Denoting by $\widetilde{w} : = w - w_q$, the previous equation is equivalent to 
\begin{equation} 
\label{eq:wq}
	\nabla \cdot \ntm w_q + k^2 \nte w_q  = \nabla \cdot (\ntm_p - \ntm) \nabla \widetilde{w} + k^2(n_p - n) \widetilde{w}  - \nabla \cdot \qtm \nabla \fc -  k^2 \qte \fc \; \; \text{in} \;\; \Omega_0.
\end{equation} 
Therefore, operator $\opeg_q^{\pm}:\ol{\Rang(\opeh^{\pm}_q)} \rightarrow \ell^2(\Zd)$ can be equivalently defined as
\begin{equation} \label{defGqequiv}
\opeg^{\pm}_q (f) := \opei^{*}_q\{\raycoefpm{w_q}{\ell}\}_{\ell \in\Zd},
\end{equation} where $w_q$ solution of \eqref{eq:wq} and $w_q + \widetilde{w}$ is solution of \eqref{eq:w}. 

\medskip

\noindent Central to the analysis of the sampling method for a single Floquet-Bloch mode $q$ is the following new interior transmission problem. 
\begin{definition}[The new interior transmission problem]\label{nitp}{\it Find $(u, f) \in
H^1(\Dint) \times H^1(\Dint)$ such that 
\begin{equation} \label{eqnew}
\left\{ \begin{array}{llll}
\nabla \cdot \ntm \nabla  {\eg} + k^2 n {\eg} - \nabla \cdot (\ntm_p - \ntm) \nabla \opes{k}(f)  - k^2(n_p - n) \opes{k}(f) = 0 \quad &\mbox{ in } \; \Dint, 
\\[6pt]
\Delta f + k^2 f = 0  \quad &\mbox{ in } \; \Dint,
\\[6pt]
 {\eg} - f= {\ftd} \quad &\mbox{ on } \; \partial \Dint,
\\[6pt]
{\big(\ntm \nabla  {\eg} - (\ntm_p - \ntm) \nabla \opes{k}(f) - \nabla f\big) \cdot \nu} ={\ftn} \quad &\mbox{ on } \; \partial \Dint, 
\end{array}\right.
\end{equation}
for given $({\ftd},\ftn) \in H^{1/2}(\partial \Dint) \times H^{-1/2}(\partial \Dint) $
where  \;  $\opes{k}: H^1( \Dint) \to H^1(\Dint)$ is defined by
\begin{equation}
\label{def:2Stilde}
\opes{k}(f) :=  \nabla \cdot  \int_{ \Dint} \tfundnp{x,y} \big((\ntm_p - I)\nabla f  \big)(y)  \d{y} +  k^2 \int_{ \Dint} \tfundnp{x,y} \big( ( \nte_p - 1) f  \big) (y) \d{y}, 
\end{equation} with the kernel 
$$\tfundnp{x,y}: = \sum_{0 \neq m \in \Z_M} e^{\i \alpha_q m L} \fundnp{x-mL-y}$$
 and $\fundnp{\cdot}$ is the $ML$-periodic outgoing fundamental solution that verifies 
 \begin{equation}\label{phinp}
	\nabla \cdot \ntm_p \nabla \fundnp{\cdot}  + k^2 n_p \fundnp{\cdot} = - \delta_0 \mbox{ in } \bigped
\end{equation}
 and where  $\nu$ denotes the unit normal on  $\partial \Dint$ outward to $\Dint$. 
}
\end{definition}
The analysis requires that this problem is well posed. We make this as an assumption here and we shall provide in the following section sufficient conditions on the coefficients $A_p$ and $n_p$ that ensure this assumption. 
\noindent
\begin{asp} \label{as-nitp} The parameters  $\ntm$, $\nte$ and $k>0$   are such that  the new interior transmission problem defined in Definition \ref{nitp} has a unique solution.
\end{asp}
The form of the new transmission eigenvalue problem shows up when we treat the injectivity of the operator $G_q^\pm$ as shown in the proof of the following result.

\begin{theorem}
\label{Lem:injectiveG_q} Suppose that Assumptions \ref{Ass:nk}, \ref{HypoLSM} and \ref{as-nitp} hold. Then the operator $\opeg^{\pm}_q : \seth^q_{\inc}(D) \to \ell^2(\Zd)$ is injective with dense range. 
\end{theorem}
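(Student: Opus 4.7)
The plan is to establish injectivity and density of the range of $\opeg^{\pm}_q$ separately; the substantive content is the injectivity, where the two interior transmission problems (the classical one \eqref{oitp} on $\Dpout$ and the new one \eqref{eqnew} on $\Dint$) each play a role. Suppose $\opeg^{\pm}_q f = 0$ for some $f = (\nabla v, v) \in \seth^q_{\inc}(D)$, and write $w = w_q + \widetilde w$ as in \eqref{for:decompw} for the solution of \eqref{eq:w} with the data associated to the quasi-periodic extension $\fc$. By the equivalent definition \eqref{defGqequiv}, all Rayleigh coefficients of $w_q$ at indices $q + M\ell$ vanish; combined with the $\alpha_q$-quasi-periodicity of $w_q$ and the orthogonality of Fourier modes, the remaining coefficients must vanish as well, so $w_q = 0$ for $|x_d| > h$. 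Unique continuation in the homogeneous region then yields $w_q \equiv 0$ in $\Omega_0 \setminus \Dtot$.

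\emph{Classical ITP on $\Dpout$.} On $\Dpout$ one has $\ntm = \ntm_p$, $n = n_p$, and $\fc = v$, so the $\widetilde w$-contribution to the source in \eqref{eq:wq} drops out. Setting $u := v + w_q$, a direct calculation using $\Delta v + k^2 v = 0$ gives $\nabla \cdot \ntm \nabla u + k^2 n u = 0$ in $\Dpout$. On $\partial \Dpout$ the vanishing of $w_q$ and $\nabla w_q$ from the outside, together with the weak transmission condition for $w_q$ (which absorbs the surface contribution from the jump of $(\ntm_p - I)\nabla v$ in the source), yields $u - v = 0$ and $\partial u / \partial \nua - \partial v / \partial \nu = 0$ on $\partial \Dpout$. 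Thus $(u,v)$ solves the classical ITP \eqref{oitp} on $\Dpout$ with zero boundary data, and Assumption~\ref{HypoLSM} forces $v = 0$ on $\Dpout$.

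\emph{New ITP on $\Dint$.} On $\Dint$ the $\widetilde w$-contribution to \eqref{eq:wq} is nontrivial and must be identified. Using the volume potential representation of $w$ against the $ML$-periodic Green function $\fundnp{\cdot}$ of the unperturbed operator and decomposing it into its Floquet-Bloch components, the restriction of $\widetilde w$ to $\Dint$ is precisely $\opes{k}(\fc)$: the kernel $\tfundnp{x,y} = \sum_{m\neq 0} e^{\i \alpha_q m L} \fundnp{x - mL - y}$ collects the $m \neq 0$ quasi-periodic shifts of $\fundnp{\cdot - y}$, while the $m=0$ term is absorbed into $w_q$. Setting $u := \fc + w_q$ on $\Dint$, an analogous boundary-condition computation shows that $(u, \fc|_{\Dint})$ satisfies the new ITP \eqref{eqnew} on $\Dint$ with $g = h = 0$. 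Assumption~\ref{as-nitp} then gives $\fc = 0$ on $\Dint$, hence $v = 0$ on $\Dint$. Combined with the previous step, $v$ vanishes on $D \cap \Omega_0 = \Dint \cup \Dpout$, and the $\alpha_q$-quasi-periodicity of $v|_{D_p}$ propagates this to every $L$-translate, so $v \equiv 0$ in $D$.

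\emph{Density of range; main obstacle.} By standard reasoning, density of $\Rang(\opeg^{\pm}_q)$ is equivalent to injectivity of its adjoint on $\ell^2(\Zd)$; using the factorization \eqref{2FactfNq} together with Lemma~\ref{lemHergq} and the reciprocity of $\Phi$ and $\fundnp{\cdot}$, this injectivity is obtained by exactly the same sequence of steps applied to the dual problem (unique continuation, classical ITP on $\Dpout$, new ITP on $\Dint$). The principal obstacle throughout is the identification $\widetilde w|_{\Dint} = \opes{k}(\fc)$: it requires a careful separation in the Floquet-Bloch decomposition of the periodic Green representation of $w$ between the $m = 0$ piece (which recombines with the forcing to form $w_q$) and the remaining shifts, together with verifying that what remains is exactly the kernel $\tfundnp{\cdot}$ of $\opes{k}$.
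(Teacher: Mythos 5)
Your argument follows the same route as the paper's proof: vanishing of the Rayleigh coefficients of $w_q$, unique continuation to get $w_q=0$ outside $\Dtot_p$, the classical interior transmission problem on $\Dpout$, and then the identification $w-w_q=\opes{k}(f)$ on $\Dint$ leading to the homogeneous new interior transmission problem, so that Assumptions \ref{HypoLSM} and \ref{as-nitp} force $f=0$. The step you flag as the principal obstacle is indeed where the paper spends its effort --- there the identification is obtained by first showing, via a Green identity that uses the transmission conditions inherited from $w_q=0$ on $\partial \Dtot_p$, that the $\Dint$-supported part of the volume potential vanishes outside $\Dint$, and then invoking uniqueness of the $ML$-periodic scattering problem to extend the identity $w=w_q+\opes{k}(f)$ into $\Dint$, rather than a direct ``absorption'' of the $m=0$ term into $w_q$ --- but your overall plan coincides with the paper's.
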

\begin{proof}
Assume that $\fug = (\nabla \fg, \fg) \in \seth^q_{\inc}(D)$ such that $\opeg_q(\fug) = 0$. Let $w$ be solution of \eqref{eq:w} with data $\fug$. From \eqref{defGqequiv} we have that the Rayleigh sequence of $w_q$ vanishes, where $w_q$ is the $\alpha_q-$quasi-periodic component obtained from the decomposition of $w$ as in \eqref{for:decompw}, and verifies 
\begin{equation} \label{eq:2w1.1}
	\nabla \cdot \ntm_p \nabla  w_q + k^2n_p w_q  = \nabla \cdot (\ntm_p - \ntm) \nabla w + k^2(n_p - n) w  - \nabla \cdot \qtm \nabla \fc -  k^2 \qte \fc \; \;  \text{in} \; \; {\Omega_0},
\end{equation}  where $\fc$ is defined in \eqref{def:ftilde}.  By unique continuation argument as at the beginning of the
proof of Lemma \ref{lemHergq} we deduce that
\begin{equation}
\label{wzero1}
	w_q = 0 \quad \mbox{in} \quad \bigped \setminus \Dtot_p.
\end{equation}  This deduces that
\begin{equation}
\label{bdrycond}
 w_q = 0 \mbox{ and } \nu \cdot \ntm_p \nabla w_q = \nu \cdot \big( (\ntm_p - \ntm) \nabla w - \qtm \nabla \fc\big) \quad \mbox{on } \partial \Dtot_p.
 \end{equation}
We also observe that $\fc$ verifies 
\begin{equation}
\label{eq:fc}
	\Delta \fc + k^2 \fc = 0 \qquad \text{in} \quad \Dtot_p.
\end{equation} By the $\alpha_q$-quasi-periodicity of $w_q$ and $\fc$, it is sufficient to prove that $\fc = 0$ in $\Omega_0$. 
\\
In the domain $\Dpout$, $n = n_p$, {$A=A_p$} and $\Dpout \cap \Dint = \emptyset$. Then  $w_q$ and $\fc$ verifies
\begin{equation}
\label{eq:2itpwq1}
\left \{
	\begin{array}{lc}
		\nabla \cdot \ntm \nabla w_q + k^2n w_q  =  - \nabla \cdot \qtm \nabla \fc - k^2 \qte \fc & \text{in} \quad   \Dpout,\\[1.5ex]
		\Delta \fc + k^2 \fc  = 0 & \text{in} \quad   \Dpout.
		\end{array}
\right.
\end{equation} Combine with \eqref{wzero1}, we then obtain that $(w_q + \fc, \fc) \in H^1(\Dpout) \times H^1(\Dpout)$ and verifies equation \eqref{oitp} with the homogeneous boundary condition. Therefore, Assumption \ref{HypoLSM}  implies that $w_q + \fc = \fc = 0$ in $\Dpout$. This is equivalent to 
 \[
	w_q  = \fc = 0 \quad \text{in} \quad \Dpout.
\] 
 We now prove that $\fc = 0$ in $ \Dint$. We first {express the quantity } $w - w_q $ in terms of $\fc$ using the property that $\fc = 0$ outside $\Dint$. To this end, recalling that  $\fc = f$ in $D$, we can  write \eqref{eq:w}  in terms of $\fc$ as
\begin{equation} 
	\nabla \cdot \ntm_p \nabla w + k^2 n_p w = \nabla \cdot (\ntm_p - \ntm) \nabla w +  k^2 (n_p - n) w  - \nabla \qtm \nabla \fc -  k^2 \qte \fc
\end{equation}
and then have
\begin{eqnarray}
	w(x) &=& - \nabla \cdot \int_{D} \Big((\ntm_p - \ntm) \nabla w - \qtm \nabla \fc \Big)(y) \fundnp{x-y}  \d{y}   \nonumber \\
	&-& k^2 \int_{D}  \Big((\nte_p - \nte)w -  \qte \fc \Big)(y) \fundnp{x-y}  \d{y}
\end{eqnarray}
Using the facts that $\fc = 0$ and $n = n_p$ in  $\Dpoutp$, i.e. $n_p = n = 1$ in $\Dint_p \setminus D$ we have 
\begin{eqnarray}
\label{eq:reforw}
	w(x)&=& \nabla \cdot \int_{\Dint_p \setminus \Dint} (\ntm_p -I) \nabla \fc (y) \fundnp{x-y} \d{y} \nonumber \\
	&+&  k^2 \int_{\Dint_p \setminus \Dint} (\nte_p -1) \fc (y) \fundnp{x-y} \d{y} \nonumber \\
	&-& \nabla \cdot \int_{\Dint} \Big((\ntm_p - \ntm) \nabla w - \qtm \nabla \fc \Big)(y) \fundnp{x-y} \d{y} \nonumber \\
	&-& k^2 \int_{\Dint} \Big((\nte_p - \nte)w -\qte \fc \Big)(y) \fundnp{x-y} \d{y}
\end{eqnarray}
From \eqref{bdrycond}, we deduce that for all  $\theta \in H^1(\Dint)$ such that $\nabla \cdot {\ntm_p} \nabla \theta  + k^2 \nte_p \theta = 0$ we have 
\begin{equation}
\int_{\Dint} \Big(\nabla \cdot {\ntm_p}  \nabla w_q  + k^2 \nte_p w_q\Big) \ol\theta = \int_{\partial \Dint} \nu \cdot \big( (\ntm_p - \ntm) \nabla w - \qtm \nabla \fc\big) \ol{\theta} \,ds,
\end{equation}
implying from \eqref{eq:2w1.1} that 
\begin{multline}
\int_{\Dint} \nabla \cdot \Big( (\ntm_p - \ntm) \nabla w  - \qtm \nabla \fc \Big) \ol\theta \d{x}  + k^2 \int_{\Dint} \Big((\nte_p - \nte)  w  - \qte \fc  \Big) \ol{\theta} \d{x}=  \\ \int_{\partial \Dint} \nu \cdot \big( (\ntm_p - \ntm) \nabla w - \qtm \nabla \fc\big) \ol{\theta} \,ds.
\end{multline} This is equivalent to 
\begin{multline}
\label{for:tgan1}
- \int_{\Dint} \Big( (\ntm_p - \ntm) \nabla w  - \qtm \nabla \fc \Big) \cdot \nabla \ol\theta \d{x}  + k^2 \int_{\Dint} \Big((\nte_p - \nte)  w  - \qte \fc  \Big) \ol{\theta} \d{x}= 0
\end{multline}
Remark that for $x \notin \Dint$, $\nabla \cdot \ntm_p \nabla \fundnp{x-y} + k^2 n_p \fundnp{x-y} = 0$ for all $ y\in\Dint$.  Applying \eqref{for:tgan1} to $\theta(y): = \fundnp{x-y}$ we have 
\begin{equation*}
 - \int_{\Dint} \Big( (\ntm_p - \ntm) \nabla w  - \qtm \nabla \fc \Big) \cdot \nabla_y    \ol{\fundnp{x-y}} \d{y} + k^2 \int_{\Dint} \Big((\nte_p - \nte)  w  - \qte \fc  \Big)  \ol{\fundnp{x-y}} \d{y} = 0
\end{equation*} This is equivalent to 
\begin{equation}
	  \nabla \cdot \int_{\Dint} \Big((\ntm_p - \ntm) \nabla w - \qtm \nabla \fc \Big) \fundnp{x-y} \d{y} \nonumber + k^2\int_{\Dint_p } \Big(( \nte_p - \nte) w - p\fc \Big) \fundnp{x - y}  \d{y} = 0
\end{equation}
Combined with $\fc = 0$ outside $\Dint_p$, we then conclude from \eqref{eq:reforw} that 
\begin{eqnarray}
	w(x)  =& \nabla& \cdot \int_{\Dint_p \setminus \Dint} \fundnp{x - y} (\ntm_p - I) \nabla \fc  \d{y}   \nonumber\\
	& +&  k^2\int_{\Dint_p \setminus \Dint} (\nte_p -1) \fc (y) \fundnp{x-y} \d{y} \quad \mbox{ for } x \notin \Dint. 
\end{eqnarray}
Next we define 
\begin{eqnarray}
\label{def:twpr}
	\twpr(x) =& \nabla& \cdot \int_{\Dint_p \setminus \Dint} \fundnp{x - y} (\ntm_p - I) \nabla \fc  \d{y}   \nonumber\\
	& +& k^2 \int_{\Dint_p \setminus \Dint}  \fundnp{x-y} ( \nte_p - 1) \fc (y) \d{y} \quad \, x \in \bigped.
\end{eqnarray}
We observe that $ \nabla \cdot \ntm_p \nabla  \twpr + k^2 n_p \twpr  = 0$ in $\Dint$. We now keep $w$ and $w_q$ as above and let  $\widehat{w}: = w_q + \twpr$ in $\Dint$ which obviously verifies 
\begin{equation} \label{reform-nitp0}
	\nabla \cdot \ntm \nabla  \widehat{w} + k^2n \widehat{w}  =  - \nabla \cdot \qtm \nabla \fc  - k^2 \qte \fc \quad \text{in} \; \Dint.
\end{equation}
 By Assumption \ref{Ass:nk} we have, from uniqueness of solutions to the ML-periodic scattering problem, that  $w = \widehat{w}$ in $\Dint$.  This proves in particular that $\twpr = w - w_q$ in $\Dint$. Noticing that 
$$
\twpr |_\Dint = \opes{k}(f),
$$
we then can reformulate \eqref{reform-nitp0} as 
\begin{equation}
\label{eq:injitp}
		\begin{array}{lll}
		\nabla \cdot \ntm \nabla w_q + k^2n w_q  &=  \nabla \cdot (\ntm_p - \ntm) \nabla \opes{k}(f) \quad &\\[1.25ex]
		& \quad  + k^2(\nte_p - \nte) \opes{k}(f)  - \nabla \cdot \qtm \nabla f - k^2 \qte f  & \; \; \text{in} \; \; \Dint.
		\end{array} 
\end{equation} 
Combining \eqref{eq:injitp} and \eqref{bdrycond}  we see that the couple $u:=w_q+f$ and $f$ verifies the homogeneous version of the new interior transmission problem \eqref{eqnew}. Assumption \ref{as-nitp} now implies that $ f = 0$ in $\Dint$, which proves the injectivity of $G_q$.
\end{proof}

The introduction of this new interior transmission problem is also motivated by the following lemma that will play a central role in the differential imaging functional introduced later.
\begin{theorem}
\label{Lem:injectiveG_q2} \label{Lem:injectiveG_q2} Suppose that Assumptions \ref{Ass:nk}, \ref{HypoLSM} and \ref{as-nitp} hold. Then, $\opei^{*}_q \sepmgreq{z} \in \Rang(\opeg^{\pm}_q)$ if and only if $z \in \Dtot_p$.
\end{theorem}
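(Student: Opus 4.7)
The plan is to adapt the proof of Theorem \ref{Ch4:TheoG}, substituting the classical interior transmission problem \eqref{oitp} with the new problem \eqref{eqnew} when the sampling point lies inside $\Dint$. Since $\Phi_q(\cdot;z+mL)=e^{-\i\alpha_q\cdot mL}\,\Phi_q(\cdot;z)$, a scalar factor can be absorbed into $\fug$ and it is enough to consider $z\in\Omega_0$, so $z\in\Dtot_p$ reduces to $z\in\Dtot=\Dint\cup\Dpout$. For the \emph{``only if''} direction I would suppose $\fug=(\nabla v,v)\in\seth^q_{\inc}(D)$ satisfies $\opeg^{\pm}_q\fug=\opei_q^{*}\sepmgreq{z}$ and let $w$ denote the solution of \eqref{eq:w} with data $\fug$. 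Using the equivalent definition \eqref{defGqequiv}, the Rayleigh coefficients of $w_q$ agree with those of $\Phi_q(\cdot;z)$ at every index $j=q+M\ell$ and both vanish at other indices by $\alpha_q$-quasi-periodicity, so $w_q$ and $\Phi_q(\cdot;z)$ share the same Rayleigh expansion for $|x_d|>h$. Since both are $\alpha_q$-quasi-periodic solutions of the free Helmholtz equation on $\bigped\setminus(\Dtot_p\cup\bigcup_m\{z+mL\})$, unique continuation (as in the proof of Lemma \ref{lemHergq}) yields $w_q=\Phi_q(\cdot;z)$ there; if $z\notin\Dtot_p$, the $H^1_{\loc}$ regularity of $w_q$ near $z$ contradicts the singularity of $\Phi_q(\cdot;z)$, so $z\in\Dtot_p$.

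For the \emph{``if''} direction with $z\in\Dpout$, since $\ntm=\ntm_p$ and $\nte=\nte_p$ on $\Dpout$ and $\Dpout\cap\Dint=\emptyset$, I would invoke Assumption \ref{HypoLSM} to solve \eqref{oitp} on the connected component of $\Dpout$ containing $z$ with Cauchy data $(\Phi_q(\cdot;z)|_\partial,\partial_\nu\Phi_q(\cdot;z)|_\partial)$, take trivial solutions on the remaining components, and $\alpha_q$-quasi-periodically extend the resulting Helmholtz function to build $\fug\in\seth^q_{\inc}(D)$; this step is completely analogous to Theorem \ref{Ch4:TheoG}. For $z\in\Dint$, I would instead invoke Assumption \ref{as-nitp} and solve \eqref{eqnew} on $\Dint$ with $\ftd=\Phi_q(\cdot;z)|_{\partial\Dint}$ and $\ftn=\partial_\nu\Phi_q(\cdot;z)|_{\partial\Dint}$ to obtain $(u,f)\in H^1(\Dint)\times H^1(\Dint)$, then extend $f$ to $v\in H^1(D)$ by its $\alpha_q$-quasi-periodic extension on each $\Dpint+mL$ and by $v\equiv 0$ on every $\Dpout+mL$, giving $\fug=(\nabla v,v)\in\seth^q_{\inc}(D)$.

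To verify $\opeg^{\pm}_q\fug=\opei_q^{*}\sepmgreq{z}$ in the case $z\in\Dint$, I would run the identifications of the proof of Theorem \ref{Lem:injectiveG_q} in reverse. Letting $w$ be the solution of \eqref{eq:w} with the constructed $\fug$ and $w_q$ its $\alpha_q$-quasi-periodic component, that proof together with the identity $\twpr|_{\Dint}=\opes{k}(f)$ (with $\twpr$ as in \eqref{def:twpr}) shows that $(w_q|_{\Dint}+f,f)$ solves \eqref{eqnew} on $\Dint$ with Cauchy data equal to the outer trace of $w_q$ on $\partial\Dint$. Subtracting this from the solution $(u,f)$ we started with — whose Cauchy data is precisely $(\Phi_q(\cdot;z)|_{\partial\Dint},\partial_\nu\Phi_q(\cdot;z)|_{\partial\Dint})$ — and applying Assumption \ref{as-nitp} forces the outer traces of $w_q$ and $\Phi_q(\cdot;z)$ on $\partial\Dint$ to coincide. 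Unique continuation of $\alpha_q$-quasi-periodic Helmholtz solutions across the obstacle-free region then yields $w_q=\Phi_q(\cdot;z)$ on $\bigped\setminus\Dtot_p$, i.e.\ the required matching of Rayleigh coefficients. The principal obstacle in this step is verifying that the non-local coupling via $\opes{k}$ in the conormal condition of \eqref{eqnew} is precisely what the outer Cauchy trace of $w_q$ satisfies, which in turn hinges on the identity $\twpr|_{\Dint}=\opes{k}(f)$ and a careful accounting of contributions from the $\alpha_q$-quasi-periodic copies of $f$ on $\Dint+mL$ for $m\neq 0$; the remaining unique-continuation and regularity arguments follow routinely from those already used in Lemma \ref{lemHergq} and Theorem \ref{Lem:injectiveG_q}.
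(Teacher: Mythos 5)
Your overall architecture coincides with the paper's: reduce to $z\in\Omega_0$ by quasi-periodicity, use the classical problem \eqref{oitp} when $z$ lies in $\Dpoutp$, the new problem \eqref{eqnew} when $z\in\Dint$, and a unique-continuation/singularity argument for $z\notin\Dtot_p$; your ``only if'' part is correct. The ``if'' part, however, has a genuine gap in how you extend the Herglotz datum outside the region where the transmission problem is solved. For $z\in\Dint$ you set $v\equiv 0$ on $\Dpoutp$ (and, for $z\in\Dpout$, you ``take trivial solutions on the remaining components''). The correct extension is $v=-\greq{z}$ there, as in the paper: with that choice the pair $(w_q,v)$ satisfies $w_q+v=0$ on $\Dpoutp$, the contrast sees no effective source, the total field equals $\greq{z}$ on those components, and it glues in $H^1_{\loc}$ with $\greq{z}$ outside. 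With $v\equiv 0$, the components of $\Dpoutp$ (where $\ntm=\ntm_p\neq I$ or $\nte=\nte_p\neq 1$) scatter the field radiated by the sources supported in $\Dint_p$; equivalently, $w_q-\greq{z}$ is a radiating quasi-periodic field driven by the nontrivial source $\nabla\cdot(\ntm_p-I)\nabla\greq{z}+k^2(\nte_p-1)\greq{z}$ on $\Dpoutp$, so $w_q\neq\greq{z}$ for $|x_d|>h$ and the Rayleigh coefficients do not match. More decisively: since $\opeg^{\pm}_q$ is injective on $\seth^q_{\inc}(D)$ (Theorem \ref{Lem:injectiveG_q}), at most one $f$ can be mapped to $\opei^{*}_q\sepmgreq{z}$, and it is not the one you construct unless $\Dpout=\emptyset$.

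A second, related gap is in your verification step. Subtracting two solutions of \eqref{eqnew} that share the same second component and invoking Assumption \ref{as-nitp} does not force their Cauchy data to coincide: uniqueness for \eqref{eqnew} says the solution is determined by the data, not conversely. The difference $(u-w_q|_{\Dint}-f,\,0)$ merely solves a Cauchy-type problem for $\nabla\cdot\ntm\nabla\,+k^2\nte$ on $\Dint$, and such a function can carry nonzero boundary data; so this step is a non sequitur. (Moreover, the identifications you want to ``run in reverse'' from Theorem \ref{Lem:injectiveG_q} were derived there under the hypothesis $w_q=0$ outside $\Dtot_p$, which is not available here.) The paper avoids both issues by arguing forwards: it defines $w_q$ explicitly as $u-v$ on $\Dint_p$ glued with $\greq{z}$ on $\bigped\setminus\Dint_p$ --- the boundary conditions of \eqref{eqnew}, including the nonlocal term $(\ntm_p-\ntm)\nabla\opes{k}(f)\cdot\nu$ in the conormal condition, are exactly what make this gluing $H^1_{\loc}$ and make $w_q$ solve \eqref{eq:wq} --- then sets $w:=w_q+\opes{k}(f)$ in $\Dint$ and $w:=w_q$ in $D\setminus\Dint$, checks that $w$ solves \eqref{eq:w} with the constructed datum, and reads off $\opeg_q^{\pm}(f)=\opei^{*}_q\sepmgreq{z}$ from $w_q=\greq{z}$ for $|x_d|>h$. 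With the corrected extension of $v$ and this direct construction, your argument goes through.
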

\begin{proof} We first consider the case when  $z \in \Dtot_p = \Dint_p \cup \Dpoutp$ and treat separately the case where $z \in \Dpoutp$ which the part of $\Dtot_p$ that does not intersect the defect and the case where $z$ is the complement part $ \Dpoutp$.

\medskip

{(i) We consider the case $z \in \Dpoutp$:}. Let  $(u, v) \in H^1(D) \times H^1(D)$ be the unique solution of (\ref{oitp}) with $\ftd: =\Phi_q(\cdot-z))|_{\partial D}$ and $\ftn:=\partial   \Phi_q(\cdot-z))/\partial \nu_\ntm|_{\partial D}$  and define 
$$w = \left\{ \begin{array}{cl} u - v
 \quad &\mbox{in} \quad \Dpoutp \\
 \Phi_q \quad &\mbox{in} \quad \bigped \setminus \Dpoutp.
\end{array} \right.$$ Then $w \in H^1_{\loc}(\bigped)$ and verifies equation \eqref{eq:w} with  $f = (f_1,f_2): = (\nabla v, v)$ in $\Dpoutp$ and $f = (- \nabla \Phi_q, -\Phi_q)$ in $\bigped \setminus \Dpoutp$. Therefore $\opeg^{\pm}(f) = \sepmgreq{z}$. Furthermore $u|_{\Dpoutp}$ and $v|_{\Dpoutp}$ are $\alpha_q$-quasi-periodic (due to the periodicity of domain $\Dpoutp$ and $\alpha_q$-quasi-periodicity of the data). This implies $f \in \seth^q_{\inc}(D)$ and $\opeg_q^{\pm}(f) = \opei^{*}_q \opeg_q^{\pm}(f) =  \opei^{*}_q\sepmgreq{z}$. 

\medskip

{ (ii) We consider now the case $z \in \Dint_p$}: We first treat the case $z \in \Dint = \Dint_p \cap \Omega_0$. Let  $(u, v) \in H^1(\Dint_p) \times H^1(\Dint_p)$ be the  $\alpha_q$-quasi-periodic extension of  $(u_{\Dint}, v_{\Dint})$, the solution of  the new interior transmission problem in Definition \ref{nitp} with  $\ftd: =\Phi_q(\cdot;z))|_{\partial \Dint}$ and $\ftn=:\partial   \Phi_q(\cdot;z))/\partial \nu_\ntm|_{\partial \Dint}$. We then define
\[
	w_q = \left\{ \begin{array}{cl} u - v 
 \quad &\mbox{in} \quad \Dint_p \\
 \Phi_q \quad &\mbox{in} \quad \bigped \setminus\Dint_p.
\end{array} \right.
\]
Let $f := (\nabla v, v)$ in $\Dint_p$ and $f := (-\nabla \Phi_q, -\Phi_q)$ in $\bigped \setminus \Dint_p$ then $f \in \seth^q_{\inc}(D)$ and $w_q \in H^1_{\loc}(\bigped)$ satisfies the scattering
problem \eqref{eq:wq} with data $f$. Furthermore, $w$ defined such as $w:=w_q + \opes{k}(f)$ in $\Dint$ and $w: = w_q$ in $D \setminus \Dint$  is solution to \eqref{eq:w} with data $f$. Therefore $\opeg_q^{\pm}(f) =  \opei^{*}_q\sepmgreq{z}$.\\
 We {\it next} consider   $z \in \Dint + mL$ with $0 \neq m \in \ZM$, and  recall that $\sepmgreq{z}= e^{\i mL \cdot \alpha_q} \sepmgreq{z - mL}
$. If we take  $f\in \seth^q_{\inc}(D)$ such that $G_q^\pm(f) =
\opei^{*}_q \sepmgreq{z - mL}$, which is possible by the previous step since
$z - mL \in \Dint$, then 
\[
	 G_q^\pm(e^{\i mL \cdot \alpha_q} f) = \opei^{*}_q(\sepmgreq{z}).
\]

\noindent To conclude the proof we now investigate the case $z \notin \Dtot_p$. If  $G_q^\pm(v) =
\opei^{*}_q \sepmgreq{z}$, then using the same unique continuation
argument as in  the proof of Lemma \ref{Lem:injectiveG_q}  we obtain $w_q =  \Phi_q $ in $\bigped \setminus \Dtot_p$ where $w_q$ is defined by
        \eqref{for:decompw} with $w$ being  the solution of \eqref{eq:w}  with $f=v$. This gives a contradiction since $w_q$ is locally
        $H^1$ in $\bigped \setminus \Dtot_p)$ while $\greq{z}$ is
        not.
\end{proof}

%
%
%


\noindent
\begin{definition}\label{ntp-def}
{\em Values of $k\in {\mathbb C}$ for which the homogenous  problem {\eqref{nitp} with }$\varphi=\psi=0$,  are called {\it new transmission eigenvalues}.
}
\end{definition}

\section{The Analysis of the New Interior Transmission Problem} \label{snitp}

\noindent
We are interested in this section by the analysis of the new interior transmission problem as formulated in \eqref{nitp}. We prove that under some reasonable conditions on the material properties and contrasts, this problem is of Fredholm type and the set of new transmission eigenvalues is discrete without finite accumulation point.  We start with proving  the following technical lemma:
\begin{lemma} 
\label{lem:pro1}
There exists $ \theta >0$ and $C > 0$ and $\kappa_0$ independent from $\kappa$ such that 
	\[
		\| \opes{\i\kappa}(f)\|_{H^1(\Dint)} \leq C e^{-\theta \kappa} \| f\|_{H^1(\Dint)}
	\] for all $f \in H^1(\Dint)$ and $\kappa \ge \kappa_0$. 
\end{lemma}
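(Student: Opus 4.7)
\medskip\noindent\textbf{Proof strategy.} The estimate rests on two ingredients: a geometric observation that the kernel $\tfundnp{x,y}$ has no singular contribution when $x,y\in\Dint$, and the exponential decay (in $\kappa$) of the periodic fundamental solution $\fundnp{\cdot}$ when the wave number is purely imaginary $k=\i\kappa$. I would organize the argument around these two points and then simply differentiate through the integral.

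\medskip\noindent\textbf{Step 1: Geometric separation.} Using Assumption \ref{HypoLSM} (i.e.\ $\partial D\cap\partial\Omega_0=\emptyset$), the region $\Dint$ is strictly interior to the single period $\Omega_0$, so $\mathrm{diam}_{d-1}(\Dint)<L_{\min}$ and $\Dint\subset[-L/2,L/2]^{d-1}\times[-h,h]$. A direct check then shows that for $m\in\Z_M\setminus\{0\}$ (assume $M_\ell\geq 2$, otherwise the sum is empty and the lemma is trivial) and for all $x,y\in\Dint$,
\[
\mathrm{dist}\bigl(x-y-mL,\ ML\,\Z^{d-1}\times\{0\}\bigr)\ \geq\ d_0\ >\ 0,
\]
for a constant $d_0$ depending only on $\Dint$, $L$ and $M$. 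In particular, every summand $\fundnp{x-mL-y}$ in the definition of $\tfundnp{x,y}$ is evaluated at points bounded away from the singular set of $\fundnp{\cdot}$ in $\bigped$.

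\medskip\noindent\textbf{Step 2: Exponential decay of $\fundnp{\cdot}$ for $k=\i\kappa$.} For $k=\i\kappa$, the defining equation \eqref{phinp} becomes $\nabla\cdot\ntm_p\nabla\fundnp{\cdot}-\kappa^2 n_p\fundnp{\cdot}=-\delta_0$, which by the coercivity hypotheses on $\Re(\ntm_p)$ and $\Re(n_p)$ is uniformly coercive. The Rayleigh radiation condition translates, for each Fourier mode, into decay $e^{-\sqrt{\kappa^2+|\alpm{\ell}|^2}\,|x_d-y_d|}$ in the vertical direction. More importantly, a standard Agmon-type argument (multiplying the equation by $e^{2\theta\kappa\rho(x)}\ol{\fundnp{\cdot}}$, where $\rho$ is a smooth ``distance to the lattice'' function cut off near $0$, and choosing $\theta<\sqrt{n_{p,0}/\|\ntm_p\|_\infty}$) yields an exponentially weighted energy estimate. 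Combined with the interior elliptic regularity for the equation away from the source, this gives
\[
\|\fundnp{\cdot}\|_{W^{1,\infty}(K)}\ \leq\ C\,e^{-\theta\kappa},\qquad \kappa\geq\kappa_0,
\]
on any compact $K\subset\bigped$ with $\mathrm{dist}(K,ML\,\Z^{d-1}\times\{0\})\geq d_0$, with $C,\theta,\kappa_0$ depending only on $d_0$, $\ntm_p$, $n_p$. This is the main technical step; I expect it to be the principal obstacle, but it is a standard Agmon/Combes--Thomas-style computation.

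\medskip\noindent\textbf{Step 3: Conclusion.} Combining Steps~1 and 2, the kernel $\tfundnp{x,y}$ (a finite sum of at most $\#\Z_M-1$ terms) and its first derivatives are bounded by $Ce^{-\theta\kappa}$ uniformly for $(x,y)\in\Dint\times\Dint$. Differentiating under the integral sign in the definition \eqref{def:2Stilde} of $\opes{\i\kappa}(f)$, I obtain pointwise bounds
\[
|\opes{\i\kappa}(f)(x)|+|\nabla\opes{\i\kappa}(f)(x)|\ \leq\ C\,e^{-\theta\kappa}\bigl(\|\nabla f\|_{L^1(\Dint)}+\kappa^2\|f\|_{L^1(\Dint)}\bigr),
\]
uniformly in $x\in\Dint$, using the boundedness of $\ntm_p-I$ and $n_p-1$. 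Absorbing the polynomial factor $\kappa^2$ into the exponential (at the price of replacing $\theta$ by any smaller $\theta'>0$ and enlarging $\kappa_0$) and using $\|\cdot\|_{L^1(\Dint)}\leq C\|\cdot\|_{L^2(\Dint)}$, I conclude
\[
\|\opes{\i\kappa}(f)\|_{H^1(\Dint)}\ \leq\ C\,e^{-\theta'\kappa}\|f\|_{H^1(\Dint)},
\]
as required.
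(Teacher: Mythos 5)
Your overall strategy (geometric separation of $\Dint$ from its nonzero lattice translates, plus exponential decay in $\kappa$ of the fundamental solution $\fundnp{\cdot}$ at imaginary wave number) is the same as the paper's, and your Steps 1 and 2 are sound in outline; the paper likewise defers the core decay estimate to an external lemma (Lemma 4.1 of the companion paper), obtaining $\int_{\BB}\big(|\fundnp{z}|^2+|\nabla\fundnp{z}|^2\big)\,\d{z}\le C_0e^{-\theta\kappa}$ on the annulus $\BB=B(0,d_{max})\setminus B(0,d)$ and then using Cauchy--Schwarz and Fubini rather than your pointwise $W^{1,\infty}$ kernel bounds. Either route works for the zeroth-order term and for the $L^2(\Dint)$ norm of the divergence-form term.

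There is, however, a genuine gap in Step 3. The first term of $\opes{\i\kappa}(f)$ is $\twpr_1(x)=\nabla_x\cdot\int_{\Dint}\tfundnp{x,y}\big((\ntm_p-I)\nabla\fc\big)(y)\,\d{y}$, so $\twpr_1(x)$ itself already consumes one $x$-derivative of the kernel, and $\nabla\twpr_1$ requires \emph{second} derivatives $\partial_{x_\ell}\nabla_x\tfundnp{x,y}$. You cannot integrate by parts in $y$ to shed a derivative, since $f$ is only $H^1$. Your claimed control of ``the kernel and its first derivatives'' therefore does not suffice for the $H^1(\Dint)$ norm on the left-hand side of the lemma. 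The paper closes exactly this point by differentiating the equation $\nabla\cdot\ntm_p\nabla\fundnp{x}-\kappa^2\nte_p\fundnp{x}=0$ (valid away from the singularity) with respect to $x_\ell$, observing that the resulting source terms are exponentially small in $H^{-1}$ of a neighborhood of $\BB$, and deducing $\|\partial_{x_\ell}\fundnp{\cdot}\|_{H^1(\BB)}\le Ce^{-\theta\kappa}$; this uses $\ntm_p\in W^{1,\infty}$. Your framework can be repaired the same way, or by invoking an interior $H^2$ (or $W^{2,p}$) estimate on an annulus slightly larger than $\BB$, whose constant grows only polynomially in $\kappa$ and is absorbed into the exponential as you already do for the factor $\kappa^2$ — but as written the second-derivative control is missing and must be supplied.
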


\begin{proof}
Denoting $\twpr: = \opes{i\kappa} (f)$ and $\fc$ the extension of  $f$ as  $\alpha_q-$quasi-periodic in $\Dint_p$, we have that 
\begin{eqnarray}
\label{def:twpr}
	\twpr(x) =& \nabla& \cdot \int_{\Dint_p \setminus \Dint} \fundnp{x - y} \big( (\ntm_p - I) \nabla \fc \, \big)(y) \d{y}   \nonumber\\
	& - &  \kappa^2 \int_{\Dint_p \setminus \Dint} \fundnp{x-y} \big( (\nte_p - 1) \fc \, \big) (y)  \d{y},  \quad  x \in \bigped
\end{eqnarray}
where $\fundnp{\cdot}$ denotes here the $ML-\text{periodic}$ fundamental solution defined in \eqref{phinp} associated with $k=i\kappa$. Let us denote further by 
\begin{equation}
\label{def:wtilde1}
\twpr_1(x) = \nabla  \cdot \int_{\Dint_p \setminus \Dint} \fundnp{x - y} \big( (\ntm_p - I) \nabla \fc \, \big)(y) \d{y} 
\end{equation} and 
\begin{equation}
\twpr_2(x) = -  \kappa^2 \int_{\Dint_p \setminus \Dint} \fundnp{x-y} \big( (\nte_p - 1) \fc \, \big) (y)  \d{y}
\end{equation}
Then $\twpr = \twpr_1 + \twpr_2$. We next define 
\[
	\Sigma := \{x - y, \quad x \in \Dint, \; y \in \Dint_p \setminus \Dint \}, \quad \text{and} \quad d_{max} \in \R: \; d_{max} > \sup \{|z|, z \in \Sigma \}
\] and remark from Assumption \ref{HypoLSM} that $\forall x\in \Dint$, $\forall y\in \Dint_p \setminus \Dint$, $|x - y| >d: = d(\Dint, \Dint_p \setminus \Dint) > 0$. We then have 
\begin{equation}
\label{def:bal}
\Sigma \subset \BB: = B(0, d_{max}) \setminus B(0,d)
\end{equation} where $B(0,d)$ is a ball of radii $d$ and centered at the origin.

\noindent 
An application of the Cauchy-Schwarz inequality, the Fubini theorem and relation \eqref{def:bal} implies
\begin{align*}
	\| \twpr_2\|^2_{L^2(\Dint)} &\leq \kappa^4|\Dint_p\setminus \Dint|\int_{\Dint } \int_{\Dint_p \setminus \Dint} \big|( n_p-1 ) \fc(y) \fundnp{x-y}\big|^2 \d{y} \d{x} \\
	& = \kappa^4 |\Dint_p\setminus \Dint|\int_{\Dint_p \setminus \Dint} \big|( n_p-1 ) \fc(y) \big|^2 \int_{\Dint} \big| \fundnp{x-y} \big|^2\d{x} \d{y}\\
 &\leq \kappa^4|\Dint_p\setminus \Dint| 
	\int_{\Dint_p \setminus \Dint} \big|( n_p-1 ) \fc(y) \big|^2 \d{y}\int_{\BB} \big| \fundnp{z} \big|^2\d{z}.
\end{align*}
Similar we have 
\begin{equation*}
\| \nabla \twpr_2\|^2_{L^2(\Dint)} \leq \kappa^4|\Dint_p\setminus \Dint| 
	\int_{\Dint_p \setminus \Dint} \big|( n_p-1 ) \fc(y) \big|^2 \d{y}\int_{\BB} \big| \nabla  \fundnp{z} \big|^2\d{z}.
\end{equation*}
Since $\fc = f$ in $\Dint$, $\fc$ is quasi-periodic and $n_p$ is periodic in $\Dint_p$, then 
$$
\int_{\Dint_p \setminus \Dint} \big|( n_p-1 ) \fc(y) \big|^2 \d{y} = (|M|-1)\int_{\Dint} \big|( n_p-1 ) \fc(y) \big|^2 \d{y} \leq (|M|-1) \sup_{\Dint}|1 - n_p|\|f \|^2_{L^2(\Dint)}.
$$ Therefore 
\begin{equation}
\|\twpr_2\|^2_{H^1(\Dint)} \leq C \int_{\BB} \Big(\big| \fundnp{z} \big|^2 + \big| \nabla  \fundnp{z} \big|^2\Big) \|f\|^{2}_{L^2(\Dint)},
\end{equation} where \; \; $C: = (|M|-1) \sup_{\Dint}|1 - n_p|$. Following the same line as in the proof of Lemma 4.1 in \cite{Thi-Phong4} using the {fact that } $\ntm_p$ and $\nte_p$  are positive definite we have that
\begin{equation}
\label{proof:tphi}
	\int_{{B}}  \Big(\big| \fundnp{z} \big|^2 + \big| \nabla  \fundnp{z} \big|^2\Big) \d{z} \leq C_0 e^{-\theta \kappa}.
\end{equation} for some constants $C_0 > 0$ and $\theta > 0$. Thus,
\begin{equation}
	\| \twpr_2\|^2_{H^1(\Dint)} \leq C e^{-\theta \kappa} \|f \|^2_{L^2(\Dint)}
\end{equation} with $C = C_0 (|M|-1) \sup_{\Dint}|1 - n_p$. We now estimate $\|\twpr_1\|_{H^1(\Dint)}$ through $\fc$. By the property of convolution, we first write \eqref{def:wtilde1} equivalently as  
\begin{equation}
\twpr_1(x) = \sum_{\ell = 1}^{d} \int_{\Dint_p \setminus \Dint} \Big( \frac{\partial}{\partial x_\ell} \fundnp{x - y}\Big) \big( (\ntm_p - I) \nabla \fc \, \big)(y) \d{y} 
\end{equation}  Using the Cauchy-Schwarz inequality and the Fubini theorem we get again
\begin{align}
\label{rel:w1}
	\|\twpr_1\|^2_{L^2(\Dint)}& \leq& \sum_{\ell = 1}^{d} (M - 1) \|\ntm - I\|_{L^\infty(\Dint)} \|\nabla \fc \|^2_{L^2(\Dint)}  \Big\| \frac{\partial}{\partial x_\ell} \fundnp{x - y}\Big\|^2_{L^2(\BB)}  \nonumber \\
	& = & (M - 1) \|\ntm - I\|_{L^\infty(\Dint)} \|\nabla \fc \|^2_{L^2(\Dint)}  \big\| \nabla  \fundnp{x - y}\big\|^2_{L^2(\BB)}.
\end{align}
We further have that
\begin{equation}
\nabla \twpr_1 = \sum_{\ell = 1}^{d} \int_{\Dint_p \setminus \Dint} \Big( \frac{\partial}{\partial x_\ell} \nabla \fundnp{x - y}\Big) \big( (\ntm_p - I) \nabla \fc \, \big)(y) \d{y} 
\end{equation} This implies using the Cauchy-Schwarz and the Fubini inequalities that
\begin{equation}
\| \twpr_1\|^2_{L^2(\Dint)} \leq (M - 1) \|\ntm - I\|_{L^\infty(\Dint)} \|\nabla \fc \|^2_{L^2(\Dint)}  \Big(\sum_{\ell = 1}^d  \Big\|  \frac{\partial}{\partial x_\ell} \nabla \fundnp{x}\Big\|^2_{L^2(\BB)} \, \Big) 
\end{equation}
From \eqref{rel:w1} and \eqref{rel:graw1} we obtain that
\begin{equation}
\label{rel:graw1}
\|\twpr_1\|^2_{H^1(\Dint)} \leq C \|\nabla \fc \|^2_{L^2(\Dint)} \bigg( \big\| \nabla  \fundnp{\cdot}\big\|^2_{L^2(\BB)} + \sum_{\ell = 1}^d  \Big\|  \frac{\partial}{\partial x_\ell} \nabla \fundnp{x}\Big\|^2_{L^2(\BB)}\bigg),  
\end{equation} with $C: = (M - 1) \|\ntm - I\|_{L^\infty(\Dint)}$. We now prove the exponential decaying of $\big\| \nabla  \fundnp{\cdot}\big\|^2_{L^2(\BB)} + \sum_{\ell = 1}^d  \Big\|  \frac{\partial}{\partial x_\ell} \nabla \fundnp{x}\Big\|^2_{L^2(\BB)}$.  However, by \eqref{proof:tphi} we already have the exponential decaying of $\big\| \nabla  \fundnp{\cdot}\big\|^2_{L^2(\BB)}$. So it leads to estimate that 
\begin{equation}
	\sum_{\ell = 1}^d  \Big\|  \frac{\partial}{\partial x_\ell} \nabla \fundnp{x}\Big\|^2_{L^2(\BB)} \leq Ce^{- \theta \kappa}
\end{equation} for some constants $C > 0$ and $\kappa > 0$. Recall that $\fundnp{x}$ satisfies 
\begin{equation}
\label{recall:phin}
\nabla \cdot \ntm_p \nabla \fundnp{x} - \kappa^2 \nte_p \fundnp{x} = 0 \quad \text{in} \quad \Dint_p \setminus \Dint.
\end{equation}
Taking the partial derivative of equation \eqref{recall:phin} with respect to $x_\ell$ for all $ \ell = 1, \ldots, d$, we obtain
\begin{equation}
\label{atmood1}
	\nabla \cdot \frac{\partial}{\partial x_\ell} \Big(\ntm_p \nabla \fundnp{x} \Big) - \kappa^2 \frac{\partial}{\partial x_\ell} \Big( \nte_p \fundnp{x}\Big) = 0
\end{equation} We denote by $\widehat{\ntm}^\ell_p: =\frac{\partial}{\partial x_\ell} \ntm_p$ and  $\widehat{\Phi}^\ell(n_p,x): = \frac{\partial}{\partial x_\ell} \fundnp{x}$. From \eqref{atmood1} we have 
{
\begin{equation}
\label{atmood2}
	\nabla \cdot \ntm_p \nabla \widehat{\Phi}^\ell(n_p; x) - \kappa^2 n_p \widehat{\Phi}^\ell(n_p; x) =  \nabla \cdot \widehat{\ntm}^\ell_p \nabla \fundnp{x} - \kappa^2 n_p \widehat{\Phi}^\ell(n_p; x)   +\kappa^2 \frac{\partial}{\partial x_\ell} \Big( \nte_p \fundnp{x}\Big) . 
\end{equation} 
We observe that the $H^{-1}(\tilde B)$ norm of the right hand side is exponentially small with respect to $\kappa$ for any bounded domain not containing the origin. Therefore, as in the proof of the exponential decay for  $\fundnp(\cdot)$,  multiplying \eqref{atmood2} with $\chi \widehat{\Phi}^\ell(n_p, \cdot)$ with $\chi$ a $C^\infty$ cutoff function that vanishes in a neighborhood of the origin and is $1$ in $B$, one can prove that  
\begin{equation}
	\|\widehat{\Phi}^\ell(n_p,\cdot)\|_{H^1(B)} \leq C e^{-\theta \kappa}
\end{equation}
for some possibly different positive constants $C$ and $\theta$ but which are independent for $\kappa$. }This ensure (from \eqref{rel:graw1}) that, there exists a constant $\tilde{C} >0$ such that
\begin{equation}
	\|\twpr_1\|_{H^1(\bigped)} \leq C e^{-\theta \kappa} \| \nabla f\|_{L^2(\bigped)}
\end{equation} which end of the proof.

\end{proof}

\noindent
We now turn our attention to the analysis of the new interior transmission problem in Definition \ref{nitp}. To further simplify notation, we set $\lambda: = - k^2\in{\mathbb C}$, ${ F_1(f)}: =  \big (\ntm_p - \ntm) \nabla \opes{\sqrt{-\lambda}}(f)$ and ${ F_2(f)} :=   (n_p - n) \opes{\sqrt{-\lambda}}(f)$. With these notations, the problem we need to solve reads: Find $(u, f) \in H^1(\Dint) \times H^1(\Dint)$ such that
\begin{equation} 
\label{anisonitp}
\left\{ \begin{array}{llll}
\nabla \cdot \ntm \nabla  {\eg} - \lambda n {\eg}  - \nabla \cdot { F_1(f)} + \lambda { F_2(f)}&=&  0 \quad& \mbox{ in } \; \Dint, 
\\[6pt]
\Delta f - \lambda f &=& 0  \quad &\mbox{ in } \; \Dint,
\\[6pt]
 {\eg} - f&=& {\ftd} \quad &\mbox{ on } \; \partial \Dint,
\\[6pt]
\partial{\eg}/\partial \nu_\ntm - {F_1(f) \cdot \nu} - \partial f/\partial \nu &=& {\ftn} \quad &\mbox{ on } \; \partial \Dint, 
\end{array}\right.
\end{equation} for given $({\ftd},\ftn) \in H^{1/2}(\partial \Dint) \times H^{-1/2}(\partial \Dint) $.  {Let us consider the Hilbert space }
\begin{equation}
	\Csp: = \{(\varphi, \psi) \in H^1(\Dint) \times H^1(\Dint) \; \text{such that} \; \varphi = \psi \; \text{on} \; \partial \Dint\}.
\end{equation}
For a given $\ftd \in H^{1/2}(\Dint)$ we first construct a lifting function $u_0 \in H^1(\Dint)$ such that $u_0 = \ftd$.
 We then write  the interior transmission problem \eqref{anisonitp} equivalently in a variational form as follows: {find  $(u-u_0, f)\in \Csp$ }such that 
\begin{align}
\label{var-anis}
&\int\limits_{\Dint} \ntm \nabla u \cdot \nabla\overline{\varphi} \,\d{x} - \int\limits_\Dint \nabla f \cdot \nabla\overline{\psi}\,\d{x }  - \int\limits_{\Dint} F_1(f) \cdot \nabla\ol{\varphi} 
+ \lambda \int\limits_D \nte  u\,\overline{\varphi}\,\d{x} - \lambda\int\limits_D f\, \overline{\psi}\,\d{x}\nonumber\\
& -  \lambda \int\limits_{\Dint} F_2(f) \ol{\varphi} \, =\, \int_{\partial \Dint} h \overline{\varphi}\,ds \quad \mbox{for all} \quad (\varphi, \psi) \in \Csp.
\end{align}
 Let us define the bounded sesquilinear forms $a_\lambda(\cdot,\cdot)$ by
\begin{multline}
a_\lambda((u,f), (\varphi,\psi)):=\int\limits_\Dint \ntm \nabla u \cdot \nabla\overline{\varphi}\,\d{x} - \int\limits_\Dint \nabla f \cdot \nabla\overline{\psi}\,\d{x}  - \int\limits_\Dint F_1(f) \cdot \nabla \ol{\varphi}\\
+ \lambda \int\limits_\Dint \nte  u\,\overline{\varphi}\,\d{x}  - \lambda \int\limits_\Dint f\, \overline{\psi} \,\d{x} -  \lambda\int\limits_\Dint F_2(f) \ol{\varphi} \d{x}
\end{multline}
and the bounded antilinear functional $L: \Csp \to \CC$ by
$${L (\varphi,\psi):=\int_{\partial \Dint} h \overline{\psi}\,ds- a_\lambda((u_0,0), (\varphi,\psi)).}$$
Letting ${\mathbf A}: \Csp \to \Csp$ be the bounded linear  operator defined by means of the Riesz representation theorem
\begin{equation}
\label{def:Ak}
 \left({\mathbf A}_\lambda(v,f), (\varphi, \psi)\right)_{\Csp} = a_\lambda((v,f), (\varphi,\psi))
 \end{equation} and $\ell \in \Csp$ the Riesz representative of $L$ defined by
$$\left(\ell, (\varphi,\psi)\right)_{\Csp} =L(\varphi,\psi),$$
the interior transmission problem becomes find $(u-u_0,f)\in \Csp$ satisfying
$${\mathbf A}_\lambda(u-u_0,f) =\ell. $$ Hence if is sufficient to prove that ${\mathbf A}_{\kappa}$ is invertible for some $\kappa > 0$ and ${\mathbf A}_\lambda - {\mathbf A}_{\kappa}$ is compact in order to conclude that ${\mathbf A}_\lambda$ is a Fredholm operator of index zero. {Analytic Fredholm theory then implies that the set of new transmission eigenvalues is discrete without finite accumulation points}. We assume   that there exists a $\delta$-neighborhood ${\mathcal N}$ of the boundary $\partial \Dint$ in $\Dint$ i.e. 
$${\mathcal N}:=\left\{x\in \Dint: \mbox{ dist}(x,\partial \Dint)<\delta\right\}$$  such that $\Im(\ntm)=0$ and  $\Im(\nte)=0$   in ${\mathcal N}$ and either $0<a_0<a^\star<1$, $0 < n_0 < n^\star <1 $  or $a_\star>1$, $n_\star > 1$  where
\begin{equation}\label{bornes constantes-2}
\begin{array}{lll}
a_\star:=\underset{x\in {\mathcal N}}{\mbox{ inf }}\underset{\tiny \begin{array}{cll}\xi\in\mathbb{R}^3 \\ |\xi|=1\end{array}}{\mbox{ inf }} \xi\cdot \ntm(x)\xi>0,\quad n_\star:  = \underset{x\in {\mathcal N}}{\mbox{ inf }} \nte(x) > 0&  \\
&\\
a^{\star}:=\underset{x\in {\mathcal N}}{\mbox{ sup }}\underset{\tiny \begin{array}{cll}\xi\in\mathbb{R}^3 \\ |\xi|=1\end{array}}{\mbox{ sup }}\xi\cdot \ntm(x)\xi<\infty, \quad n^\star:  = \underset{x\in {\mathcal N}}{\sup} \,\nte(x) <\infty. 
\end{array}
\end{equation}
Let us start with the case when $a_0<a^*<1$. For {later} use, we introduce $\chi\in\mathcal{C}^{\infty}(\overline{\Dint})$ a cut off function such that $0\leq\chi\leq 1$ is supported in $\overline{\mathcal N}$ and equals to one in a neighborhood of the boundary. 
\begin{lemma}
\label{LemmeInver}
Assume that $\ntm$ and $n$ are real valued in ${\mathcal N}$ and either $0<a_0<a^\star<1$, $0 < n_0 < n^\star <1 $  or $a_\star>1$, $n_\star > 1$. Then, for sufficient large $\kappa > 0$, the operator $\mathbf{A}_{\kappa}$ is invertible.
\end{lemma}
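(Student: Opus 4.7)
The plan is to use the T-coercivity technique combined with the exponential decay estimate of Lemma \ref{lem:pro1}. Since $\lambda=\kappa>0$, we have $\sqrt{-\lambda}=i\sqrt{\kappa}$, so the nonlocal terms in $a_\kappa$ involve $\opes{i\sqrt{\kappa}}(f)$. Lemma \ref{lem:pro1} (applied with parameter $\sqrt{\kappa}$) yields
$$\|F_1(f)\|_{L^2(\Dint)}+\|F_2(f)\|_{L^2(\Dint)} \leq C\, e^{-\theta\sqrt{\kappa}}\,\|f\|_{H^1(\Dint)},$$
and this is the key ingredient that lets us treat $F_1,F_2$ as arbitrarily small perturbations when $\kappa$ is large.

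On $\Csp$, introduce the involution $T:(u,f)\mapsto (u-2\chi f,-f)$ (assuming the sub-case $0<a_0<a^\star<1$, $0<n_0<n^\star<1$). Since $\chi\equiv 1$ near $\partial \Dint$, the boundary traces satisfy $(u-2\chi f)|_{\partial\Dint}=-f|_{\partial\Dint}$, so $T$ maps $\Csp$ into itself; it is bounded, and $T^2=\mathrm{Id}$, hence an isomorphism. Substituting $(\varphi,\psi)=T(u,f)$ into $a_\kappa$ and taking real parts, the local portion gives
$$\mathrm{Re}\,a_\kappa((u,f),T(u,f)) \geq \int_\Dint A|\nabla u|^2 + \int_\Dint|\nabla f|^2 + \kappa\!\int_\Dint n|u|^2 + \kappa\!\int_\Dint|f|^2 - 2\Bigl|\mathrm{Re}\!\int_\Dint A\nabla u\cdot\nabla(\chi\bar f)\Bigr| - 2\kappa\Bigl|\mathrm{Re}\!\int_\Dint n\,u\,\chi\bar f\Bigr| - R_\kappa,$$
where $R_\kappa$ gathers the contributions of $F_1(f)$ and $F_2(f)$. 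On $\mathrm{supp}(\chi)\subset\mathcal N$ we use $\xi\cdot A\xi\le a^\star|\xi|^2$, $n\le n^\star$ and the bound $|\nabla(\chi f)|^2\le (1+\eta)\chi^2|\nabla f|^2+C_\eta|f|^2$. Applying Young's inequality with parameters $\epsilon\in(a^\star(1+\eta),1)$ and $\epsilon'\in(n^\star,1)$ — both intervals non-empty exactly because $a^\star<1$ and $n^\star<1$ — absorbs the cross terms, yielding
$$\mathrm{Re}\,a_\kappa((u,f),T(u,f)) \geq c_0\bigl(\|\nabla u\|_{L^2}^2+\|\nabla f\|_{L^2}^2\bigr) + c_1\kappa\bigl(\|u\|_{L^2}^2+\|f\|_{L^2}^2\bigr) - C\|f\|_{L^2}^2 - |R_\kappa|,$$
with constants $c_0,c_1,C>0$ independent of $\kappa$.

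By Lemma \ref{lem:pro1} one has $|R_\kappa|\le C(1+\kappa)e^{-\theta\sqrt{\kappa}}\bigl(\|u\|_{H^1}^2+\|f\|_{H^1}^2\bigr)$. Choosing $\kappa$ large enough so that the $\kappa\|f\|_{L^2}^2$ term absorbs $C\|f\|_{L^2}^2$, and $c_0$ absorbs the exponentially small $|R_\kappa|$, we obtain the T-coercivity estimate
$$\mathrm{Re}\,a_\kappa((u,f),T(u,f)) \geq c\,\|(u,f)\|_{\Csp}^2.$$
By Lax–Milgram applied to the bounded sesquilinear form $((u,f),(\varphi,\psi))\mapsto a_\kappa((u,f),T(\varphi,\psi))$, and since $T$ is an isomorphism, $\mathbf{A}_\kappa$ is invertible. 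In the dual sub-case $a_\star>1$, $n_\star>1$, one chooses instead $T(u,f):=(u,-f+2\chi u)$ and carries out a symmetric argument.

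The main obstacle is the cross term $\int A\nabla u\cdot\nabla(\chi\bar f)$: the derivative landing on $\chi$ produces a low-order $|f|^2$ remainder, so the Young parameters must be tuned within the strict interval opened up by the hypothesis $a^\star<1$ (and similarly $n^\star<1$ for the zeroth-order counterpart). Once this is managed, the nonlocal $F_1,F_2$ terms are handled almost for free by the exponential smallness granted by Lemma \ref{lem:pro1}.
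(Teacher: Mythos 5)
Your proof is correct and takes essentially the same route as the paper: the same T-coercivity isomorphism $\mathbf{T}:(u,f)\mapsto(u-2\chi f,-f)$ (and its counterpart $(u,f)\mapsto(u,2\chi u-f)$ in the dual case), the same use of Lemma \ref{lem:pro1} to make the nonlocal terms $F_1,F_2$ exponentially small, and the same Young-inequality absorption of the cross terms exploiting $a^\star<1$, $n^\star<1$. Your parameter bookkeeping (requiring the Young parameter to lie in $(a^\star(1+\eta),1)$) is in fact slightly more careful than the paper's.
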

\begin{proof} 
We shall prove first the case $0<a_0<a^\star<1$, $0 < n_0 < n^\star <1 $. Using the $T-$coercivity approach \cite{Bonne2011a}, we first define the isomorphism ${\mathbf T}: \Csp\to \Csp$ by
$${\mathbf T}: (u,f) \mapsto (u-2\chi f,-f)$$  (Note that ${\mathbf T}$ is an isomorphism since ${\mathbf T}^2=I$). {We then consider the sesquilinear form $a_\lambda^{\mathbf{T}}$ defined on $\Csp \times \Csp$ by
$$
a_\lambda^{\mathbf{T}}((u,f),(\varphi,\psi)) = a_\lambda^{\mathbf{T}}((u,f),{\mathbf T}(\varphi,\psi)).
$$
To prove the lemma, it is sufficient to  prove that $a_{\kappa}^{\mathbf{T}}$ is coercive for $\kappa$ sufficiently large.}
We have for all $(u,f)\in \Csp$, 
\begin{align}
 a_{\kappa}^{\mathbf{T}}((u,f),(u,f))  &=  \int\limits_\Dint \ntm \nabla u \cdot \nabla u + |\nabla f|^2 - 2 \ntm\nabla u \cdot\nabla(\chi f)) - F_1(f) \cdot \nabla (u - 2\chi f) \d{x}\nonumber \\
 & \qquad + \kappa  \int\limits_{\Dint} \nte |u|^2 +|f|^2 - 2 \nte u \,\ol{\chi f}  - F_2(f) \,\ol{(u - 2 \chi f)} \d{x}.  \label{Tcoer etape 1}
\end{align}
From Lemma \ref{lem:pro1} and the inequality $(ax + by)^2 \leq (a^2 + b^2)(x^2 + y^2)$ we have 
\begin{align}
\label{eq:tg1}
	&\Big|\int_{\Dint} F_1(f) \cdot \nabla (u -2\chi f)  \Big|  + \kappa \Big|\int_{\Dint} F_2(f)  \ol{u -2\chi f} \Big| \\
	&= \Big| \int_{\Dint} (\ntm_p - \ntm) \nabla \opes{i \sqrt{\kappa}}(f) \cdot \nabla \ol{u -2\chi f}\Big| + \Big| \kappa^2\int_{\Dint}(n_p - n)  \opes{i \sqrt{\kappa}}(f) \ol{u -2\chi f} \Big|  \nonumber\\
	&\leq \max\{\|\ntm_p - \ntm \|_{L^{\infty}(\Dint)} , \kappa^2 \|\nte_p - \nte \|_{L^{\infty}\Dint)}  \} Ce^{-\theta \sqrt{\kappa}}  \| f\|_{H^1(\Dint)}  \| u -2\chi f \|_{H^1(\Dint)} 
\end{align} 
where the quantity $Ce^{-\theta \sqrt{\kappa}}$ is defined in Lemma \ref{lem:pro1}.  By Cauchy-Schwarz inequality we have the following estimate
\begin{align*}
 \| f\|_{H^1(\Dint)}  \| u -2\chi f \|_{H^1(\Dint)}  &\leq  \|f\|^2_{H^1(\Dint)} + \frac{1}{4}  \|u -2\chi f\|^2_{H^1(\Dint)} \\
 &\leq  \|f\|^2_{H^1(\Dint)} + \Big( \|u\|^2_{H^1(\Dint)} +4 \max\{1, \|\nabla \chi \|_{L^\infty(\mathcal{N})} \} \big) \|f\|^2_{H^1(\mathcal{N})} \Big)
\end{align*} Let us denote by $ c_0(\kappa): =  \max\{\|\ntm_p - \ntm \|_{L^{\infty}(\Dint)} , \kappa^2 \|\nte_p - \nte \|_{L^{\infty}\Dint)}  \} Ce^{-\theta \kappa}$ and $c_1(\kappa) := 4 \max\{1, \|\nabla \chi \|_{L^\infty(\mathcal{N})} \} c_0(\kappa)$ we then have 
\begin{multline}
\label{eq:tglem1}
\Big|\int_{\Dint} F_1(f) \cdot \nabla (u -2\chi f)  \Big|  + \kappa \Big|\int_{\Dint} F_2(f)  \ol{u -2\chi f} \Big| \\  
\leq ({c_0(\kappa) + c_1(\kappa))} \|f\|^2_{H^1(\Dint)}  + c_0(\kappa) \|u\|^2_{H^1(\Dint)}
\end{multline}
Furthermore, using Young's inequality,  we can write 
\begin{align}
 2\left| \int_{\Dint} \ntm\nabla u\cdot\nabla(\chi f))\right| & \le \;  2\left|\int_{\mathcal{N}} \chi \ntm\nabla u\cdot\nabla f\right|  + 2 \left|\int_{\mathcal{N}} \ntm \nabla u \cdot \nabla(\chi) f\right| \nonumber   
 \\ 
 &  \le \;  \alpha\int_{\mathcal{N}}\left|\ntm  \nabla u \cdot \nabla u \right| + \alpha^{-1} \int_{\mathcal{N}} \left|\ntm \nabla f\cdot\nabla f\right| \label{inegalite Young 1}\\
 &  + \; \beta \int_{\mathcal{N}} \left|\ntm\nabla u \cdot \nabla u \right|+ \beta^{-1}\int_{\mathcal{N}}\left|\ntm \nabla(\chi)\cdot\nabla(\chi)\right||f|^2 \nonumber 
\end{align} 
and 
\begin{equation}\label{inegalite Young 11}
2\left| \int_{\Dint} \nte u \,\chi f\right| \le  \eta \int_{\mathcal{N}} \nte |u|^2 + \eta^{-1} \int_{\mathcal{N}} \nte |f|^2
\end{equation}
for arbitrary constants $\alpha>0$, $\beta>0$ and $\gamma>0$.  
Substituting \eqref{eq:tglem1}, (\ref{inegalite Young 1}) and (\ref{inegalite Young 11}) into (\ref{Tcoer etape 1}), we  now obtain
\begin{align}
\left|a_{\kappa}^{\mathbf{T}}((u,f),(u,f))\right| & \ge  \int\limits_{\Dint \setminus \mathcal{N}} \Re(\ntm)  \nabla u \cdot\nabla \bar u + \int\limits_{\Dint \setminus \mathcal{N}}  |\nabla f |^2 + \kappa \int\limits_{\Dint \setminus \mathcal{N}}  \Re (\nte)  | u|^2 + \kappa \int\limits_{\Dint \setminus \mathcal{N}} |f|^2 \nonumber\\
 & \; +   \int\limits_{\mathcal{N}}\big((1- \alpha - \beta\big) \ntm \nabla u \cdot \nabla \bar u +
\big((I- \alpha^{-1}\ntm\big)\nabla f \cdot \nabla \bar f\nonumber \\
 & + \kappa \int\limits_{\mathcal{N}} (1- \eta)\nte |u|^2 + \int\limits_{\mathcal{N}}\big((\kappa (1- \eta^{-1}\nte)-  \|\nabla\chi\|^2_{L^\infty(\mathcal{N})}a^{\star}\alpha^{-1}\big)|f|^2\nonumber\\
 &-({c_0(\kappa) + c_1(\kappa))} \|f\|^2_{H^1(\Dint)}  - c_0(\kappa) \|u\|^2_{H^1(\Dint)}. \nonumber
\end{align}
Taking $\alpha$, $\beta$, and $\eta$ such that $a_0 < \alpha < 1$, $n_0 < \eta <1$ and $\beta + \alpha < 1$ we then get 
\begin{align}
\left|a_{\kappa}^{\mathbf{T}}((u,f),(u,f))\right| & \ge \gamma_1 \|\nabla u \|^2_{L^2(\Dint)} + \kappa \gamma_2 \|u \|^2_{L^2(\Dint)}   +  \gamma_3  \|\nabla f \|^2_{L^2(\Dint)}  + (\gamma_4 \kappa - \gamma_5) \|f\|^2_{L^2(\Dint)}  \nonumber\\
 &-({c_0(\kappa) + c_1(\kappa))} \|f\|^2_{H^1(\Dint)}  - c_0(\kappa) \|u\|^2_{H^1(\Dint)} \nonumber
\end{align}
for some constants $\gamma_i$, $i=1, \ldots, 5$ that are positive and independent from $\kappa$. Since $c_0(\kappa) $ and $c_1(\kappa) $ go to $0$ as $\kappa \to \infty$ one then easily obtains the coercivity of $a_{\kappa}^{\mathbf{T}}$ for large enough $\kappa$.  This finishes the proof of the case $0<a_0<a^\star<1$, $0 < n_0 < n^\star <1 $.  The proof of the case  $a_\star>1$, $n_\star > 1$ follows the same lines using the isomorphism ${\mathbf T}: (u,f)\mapsto (u,  2\chi u -f)$.
\end{proof}

\begin{lemma} For any complex numbers $\lambda$ and $\kappa$, the operator  
$A_\lambda - A_{\kappa} : \Csp \to \Csp$ is compact.
\end{lemma}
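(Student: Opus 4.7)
The plan is to decompose $a_\lambda-a_\kappa$ into four contributions, each of which gives rise to a compact operator on $\Csp$ for a clear structural reason: Rellich compactness for two of them, and the smoothing property of the volume potential $\opes{k}$ for the other two.

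First I would write $(a_\lambda-a_\kappa)((u,f),(\varphi,\psi))$ as the sum of $(\lambda-\kappa)\int_\Dint \nte\,u\,\overline{\varphi}\,\d{x}$, $-(\lambda-\kappa)\int_\Dint f\,\overline{\psi}\,\d{x}$, $-\int_\Dint(\ntm_p-\ntm)\nabla(\opes{\sqrt{-\lambda}}-\opes{\sqrt{-\kappa}})(f)\cdot\nabla\overline{\varphi}\,\d{x}$, and the analogous difference of the $\lambda (n_p-n)\opes{\sqrt{-\lambda}}$ and $\kappa(n_p-n)\opes{\sqrt{-\kappa}}$ pieces. Via the Riesz isomorphism used to define $\mathbf{A}_\lambda$, compactness of the sesquilinear form difference is equivalent to compactness of $\mathbf{A}_\lambda-\mathbf{A}_\kappa$, so I would check each of the four pieces separately.

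The first two pieces manifestly factor through the compact embedding $H^1(\Dint)\hookrightarrow L^2(\Dint)$ applied respectively to $u$ and to $f$, and hence produce compact operators on $\Csp$. For the last two, the key step is to show that for every $k\in\CC$ the operator $\opes{k}:H^1(\Dint)\to H^1(\Dint)$ is itself compact. The kernel $\tfundnp{x-y}=\sum_{0\ne m\in\Z_M}e^{\i\alpha_q mL}\Phi(n_p;x-mL-y)$ is a finite sum, and Assumption \ref{HypoLSM} (which forces $\Dint\Subset\Omega_0$, the cell of diameter $L$) guarantees that $|x-mL-y|\ge\delta>0$ uniformly in $x,y\in\Dint$ and in $m\ne0$. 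Since the singular index $m=0$ is excluded, every remaining term of the finite sum is $C^\infty$ on $\Dint\times\Dint$ by interior regularity for the periodic fundamental solution away from its source, and arbitrary derivatives can be passed under the integral. This yields a continuous map $\opes{k}:H^1(\Dint)\to C^\infty(\overline{\Dint})$, and hence a compact map $H^1(\Dint)\to H^1(\Dint)$. Multiplying by the bounded contrasts $\ntm_p-\ntm$ and $\nte_p-\nte$ preserves this compactness, so each of the two $\opes{k}$-pieces of $a_\lambda-a_\kappa$ produces a compact operator on $\Csp$.

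The piece requiring the most care is the $H^1$-regularity of $\opes{k}(f)$, since its first term involves a divergence acting on a volume integral. But because the kernel is smooth on $\Dint\times\Dint$ the divergence can be computed classically by differentiating under the integral, reducing the $H^1$-control of $\opes{k}(f)$ to $L^2$-bounds of derivatives of $\tfundnp{\cdot}$ over the regular region $\{x-mL-y:x,y\in\Dint,\,m\ne0\}$; this is exactly the mechanism used for imaginary wave numbers in Lemma \ref{lem:pro1}. Summing the four compact contributions then yields the compactness of $\mathbf{A}_\lambda-\mathbf{A}_\kappa$.
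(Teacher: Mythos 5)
Your proof is correct and follows essentially the same route as the paper: Rellich compactness for the zeroth-order terms, and the smoothing property of $\opes{k}$ (whose kernel omits the singular $m=0$ contribution and is therefore non-singular on $\Dint\times\Dint$ because $\partial D\cap\partial\Omega_0=\emptyset$) for the $F_1,F_2$ terms — indeed you are more careful than the paper in tracking the $\lambda$-dependence of $F_1$ and $F_2$ through $\opes{\sqrt{-\lambda}}$. The one overstatement is the claimed $C^\infty$ smoothness of the kernel: since $\ntm_p$ has only $W^{1,\infty}$ entries and $\nte_p\in L^\infty$, interior elliptic regularity gives $x\mapsto\tfundnp{x,y}$ only $H^2$ near $\Dint$ (uniformly in $y\in\Dint$), not $C^\infty$; this still suffices, because the associated volume potential is then Hilbert--Schmidt from $L^2(\Dint)$ into $H^2(\Dint)$, hence $f\mapsto\nabla\opes{k}(f)$ is compact from $H^1(\Dint)$ into $L^2(\Dint)^d$, which is all that the two $\opes{k}$-pieces of the sesquilinear form require.
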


\begin{proof}
	Taking the difference  $a_\lambda - a_{\kappa}$ we have
\begin{align*}
&a_\lambda((u,f), (\varphi, \psi)) - a_{\kappa}((u,f), (\varphi, \psi)) =  \\
&(\kappa - \lambda) \int\limits_{\Dint} F_1(f) \cdot \nabla\ol{\varphi} 
+(\kappa - \lambda) \int\limits_{\Dint} F_2(f) \ol{\varphi}  + (\lambda - \kappa) \int\limits_\Dint \nte  u\,\overline{\varphi}\,\d{x} - (\lambda -\kappa)\int\limits_\Dint f\, \overline{\psi}\,\d{x}.
\end{align*}
The compactness of $A_\lambda - A_{\kappa}$ then easily follows from the continuity of $F_1 : L^2(\Lambda) \to \Csp $  and $F_2 :  L^2(\Lambda) \to \Csp$ and the compact embedding of $H^1(\Lambda)$  into $L^2(\Lambda)$.
\end{proof}

As a consequence of the two previous lemma and analytic Fredholm theory we get the following result on new transmission eigenvalues. 
Note that this theorem provides sufficient conditions under which Assumption \ref{as-nitp} hold.
 \begin{theorem}\label{mainth}
Assume that the hypothesis of Lemma \ref{LemmeInver} hold.  Then the new interior transmission formulated in Definition \ref{nitp} has a unique solution depending continuously on the data $\varphi$ and $\psi$ provided $k\in{\mathbb C}$ is not a new transmission eigenvalue defined  in Definition \ref{ntp-def}. In particular the set of new transmission eigenvalues in ${\mathbb C}$ is discrete (possibly empty) with $+\infty$  as the only possible accumulation point.
\end{theorem}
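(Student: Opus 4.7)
The plan is to assemble the preceding machinery (the $T$-coercivity lemma and the compactness of $\mathbf{A}_\lambda-\mathbf{A}_\kappa$) into a standard Fredholm/analytic-Fredholm argument applied to the operator family $\lambda\mapsto\mathbf{A}_\lambda$ introduced in \eqref{def:Ak}. Recall that the reformulation leading to \eqref{var-anis} shows that $(u,f)$ solves the new interior transmission problem with data $(\varphi,\psi)$ if and only if $(u-u_0,f)\in\Csp$ satisfies $\mathbf{A}_\lambda(u-u_0,f)=\ell$ with $\ell$ depending continuously on the data and on any chosen lifting $u_0$. Hence the claim will follow once we establish invertibility of $\mathbf{A}_\lambda$ outside a discrete set.

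First I would fix $\kappa_0>0$ large enough so that Lemma \ref{LemmeInver} applies, i.e.\ $\mathbf{A}_{\kappa_0}:\Csp\to\Csp$ is invertible. The previous lemma shows that, for every $\lambda\in\CC$, the difference $\mathbf{A}_\lambda-\mathbf{A}_{\kappa_0}$ is compact; hence
\[
\mathbf{A}_\lambda=\mathbf{A}_{\kappa_0}+(\mathbf{A}_\lambda-\mathbf{A}_{\kappa_0})
\]
is a compact perturbation of an isomorphism, so $\mathbf{A}_\lambda$ is a Fredholm operator of index zero for every $\lambda\in\CC$. In particular, at any fixed $\lambda$ injectivity is equivalent to invertibility, and in that case the solution map is bounded by the open mapping theorem, giving the continuous dependence on $(\varphi,\psi)$.

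Next I would verify that $\lambda\mapsto\mathbf{A}_\lambda$ is an analytic operator-valued function on $\CC$. Looking at the sesquilinear form $a_\lambda$, the only $\lambda$-dependence sits in the explicit linear factors in front of the $L^2$-pairings and, through $F_1(f)$ and $F_2(f)$, in the operator $\opes{\sqrt{-\lambda}}$ whose kernel $\tfundnp{x,y}$ involves the ${ML}$-periodic fundamental solution $\Phi(n_p;\cdot)$ at wavenumber $k=\sqrt{-\lambda}$. Since $x,y$ range in disjoint closed sets (Assumption \ref{HypoLSM} gives positive distance between $\Dint$ and $\Dint_p\setminus\Dint$), the series representation of $\Phi(n_p;\cdot)$ yields kernels that depend analytically on $\lambda\in\CC$, so $F_1$ and $F_2$, and hence $\mathbf{A}_\lambda$, depend analytically on $\lambda$.

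Finally I would invoke the analytic Fredholm theorem for the family $\mathbf{A}_\lambda$: since the family is analytic on the connected set $\CC$ and invertible at $\lambda=\kappa_0$, the set
\[
\Sigma:=\{\lambda\in\CC:\ \mathbf{A}_\lambda\ \text{is not invertible}\}
\]
is discrete in $\CC$ with no finite accumulation point, and at each $\lambda\notin\Sigma$ the inverse $\mathbf{A}_\lambda^{-1}$ is bounded and depends analytically on $\lambda$. Translating back via $\lambda=-k^2$, the set of $k\in\CC$ for which the homogeneous new interior transmission problem admits a non-trivial solution, i.e.\ the new transmission eigenvalues in the sense of Definition \ref{ntp-def}, is the preimage of $\Sigma$ under $k\mapsto -k^2$ and is therefore discrete in $\CC$ with $+\infty$ as the only possible accumulation point. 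For every $k$ outside this set the problem is uniquely solvable with the stated continuity, which is the assertion of the theorem. The main subtlety, and the place I would be most careful, is checking the analytic dependence of the kernel $\tfundnp{x,y}$ through the whole complex plane (including dealing with the $ML$-periodic Green function branch at Rayleigh-Bloch exceptional frequencies, where a meromorphic extension suffices since poles remain a discrete set that can be absorbed into $\Sigma$).
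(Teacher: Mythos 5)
Your proposal is correct and follows essentially the same route as the paper: invertibility of $\mathbf{A}_{\kappa}$ for large $\kappa$ from Lemma \ref{LemmeInver}, compactness of $\mathbf{A}_\lambda-\mathbf{A}_{\kappa}$ from the subsequent lemma, hence Fredholm of index zero, and then the analytic Fredholm theorem to obtain discreteness of the eigenvalue set. Your added care about the analytic dependence of the kernel of $\opes{\sqrt{-\lambda}}$ on $\lambda$ (a point the paper leaves implicit) is a welcome refinement rather than a deviation.
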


\section{A Differential Imaging Algorithm} \label{ch3subsec4}
\subsection{{Description and analysis of the algorithm} }
Throughout this section we assume that  Assumptions \ref{Ass:nk}, \ref{HypoLSM} and \ref{as-nitp}  hold. For sake of simplicity of presentation  we
only state the results when  the measurements operator $\open^+$ is available. Exactly the same
holds for the operator  $\open^-$ by changing everywhere the exponent  $+$  to $-$. For given $\phi$ and $a$ in $\ell^2(\Zd)$ we define the functionals
\newcommand{\pmp}{+}
\begin{equation}
	\begin{array}{ll}
		J^{\pmp}_{\alpha}(\phi,\aher) := \alpha(\open_{\sharp}^\pmp\aher,\aher) + \|\open^\pmp\aher - \phi\|^2, \\[1.5ex]
		J^{\pmp}_{\alpha,q}(\phi,\aher) :=
                \alpha(\open_{q,\sharp}^\pmp\aher,\aher) + \|\open_q^\pmp \aher - \phi\|^2
	\end{array}
\end{equation}
{with $\open_{q,\sharp}^\pmp:= I_q^* \open_{\sharp}^\pmp  I_q$.}
Let $\aher^{\alpha,z}$, $\aher_q^{\alpha,z}$ and $\tilde \aher^{\alpha,z}_q$ 
in $\ell^2(\Zd)$ verify (i.e. are minimizing sequences)
\begin{equation}
	\begin{array}{ll}
\displaystyle 		J^{\pmp}_{\alpha} (\sepgre{z},\aher^{\alpha,z}) \leq \inf_{\aher \in \ell^2(\Zd)} J^{\pmp}_{\alpha}(\sepgre{z},a) + c(\alpha)
		\\[12pt]
\displaystyle 		J^{\pmp}_{\alpha} (\sepgreq{z},\aher_q^{\alpha,z}) \leq \inf_{\aher \in \ell^2(\Zd)} J^{\pmp}_{\alpha}(\sepgreq{z},a) + c(\alpha)
		\\[12pt]
\displaystyle 			J^{\pmp}_{\alpha,q} (\opei^{*}_q \sepgreq{z},\tilde \aher_q^{\alpha,z}) \leq \inf_{\aher \in \ell^2(\Zd)} J^{\pmp}_{\alpha,q}(\opei^{*}_q \sepgreq{z},a) + c(\alpha)
	\end{array}
\end{equation}
with $\frac{c(\alpha)}{\alpha} \to 0$ as $\alpha \to 0$.  Here $\sepmgre{z}$  are  the Rayleigh coefficients of $\Phi(x;z)$ given by \eqref{lunch} and $\sepmgreq{z}$  are the Rayleigh coefficients of $\greq{z}$ given by (\ref{hhh}).

Based on the results of the previous sections and following the same arguments as in \cite[Section 6]{Thi-Phong4} we obtain the following result that we state here without proof.
\begin{lemma}\label{glsm}
\begin{enumerate}
\item  $\zg \in D$ if and only if $ \lim \limits_{\alpha \to 0} (\open_{\sharp}^+\aher^{\alpha,z},\aher^{\alpha,z})< \infty$. Moreover, if $\zg \in D$ then $\opeh^+ \aher^{\alpha,z} \to v_z$ in $L^2(D)$ where $(u_z, v_z)$ is the solution of problem \eqref{oitp} with $\ftd= \Phi(x;z)$
and $\ftn = \partial \Phi(x;z)/\partial \nu$ on $\partial D$.
\item  $\zg \in D_p$ if and only if $ \lim \limits_{\alpha \to 0} (\open_{\sharp}^+\aher_q^{\alpha,z},\aher_q^{\alpha,z})< \infty$. Moreover, if $\zg \in D_p$ then $\opeh^+ \aher_q^{\alpha,z} \to v_z$ in $L^2(D)$ where $(u_z, v_z)$ is the solution of problem \eqref{oitp} with $\ftd= \greq{z}$
and $\ftn = \partial \greq{z}/\partial \nu$ on $\partial D$.
\item $\zg \in \Dtot_p$ if and only if $ \lim \limits_{\alpha \to 0} (\open_{q,\sharp}^+\tilde \aher^{\alpha,z}_q,\tilde \aher^{\alpha,z}_q) < \infty$. Moreover, if $\zg \in \Dtot_p$ then $\opeh_q^+ \tilde \aher^{\alpha,z}_q \to h_z$ in $L^2(D)$ where $h_z$ is defined by 
\begin{equation}
\label{def:fz}
\left.
\begin{array}{ll}
h_z = \left\{ \begin{array}{cl}
	- \greq{z} \quad  &\text{in}  \quad   \Dint_p \\[1.25ex]
	v_z  \quad   &\text{in} \quad   \Dpoutp
\end{array} \right. \qquad  \text{if} \quad   z \in \Dpoutp \\[4.ex]
 h_z = \left\{ \begin{array}{cl}
	\widehat{v}_z \quad  & \text{in} \quad  \Dint_p \\[1.25ex]
	- \greq{z} \quad  &\text{in}  \quad   \Dpoutp
\end{array} \right. \qquad  \text{if} \quad  z \in \Dint_p
\end{array}
\right.
\end{equation}
 where $(u_z, v_z)$ is the solution of problem \eqref{oitp} with $\ftd= \greq{z}$
and $\ftn = \partial \greq{z}/\partial \nu$ on $\partial D$ and $(\widehat{u}_z,\widehat{v}_z)$ is $\alpha_q$-quasi-periodic extension of  the solution $(u,f)$ of the new interior transmission problem  in Definition \eqref{nitp} with $\ftd= \greq{z}$
and $\ftn = \partial \greq{z}/\partial \nu$ on $\partial \Dint$.
\end{enumerate}
\end{lemma}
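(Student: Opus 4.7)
The plan is to invoke the abstract generalized linear sampling framework of \cite{Audib2014,Audib2015a} (see also \cite[Chapter 2]{CCH}), applied in succession to the two symmetric factorizations already at our disposal: the full-data factorization $\open_{\sharp}^+ = (\opeh^+)^* \opet_{\sharp}\,\opeh^+$ from Theorem \ref{TheoFactorization}, and its single Floquet--Bloch analogue $\open_{q,\sharp}^+ = (\opeh_q^+)^* \opet_{\sharp}\,\opeh_q^+$ obtained by restricting the factorization \eqref{2FactfNq} to the subspace $\seth_{\inc}^q(D) = \overline{\Rang(\opeh_q^+)}$ characterized in Lemma \ref{lemHergq}. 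For each of these, the abstract GLSM theorem asserts that, for any target $\phi \in \ell^2(\Zd)$ and any minimizing sequence $a^{\alpha,\phi}$ of the associated functional $J_\alpha$,
\begin{equation*}
\liminf_{\alpha \to 0}\bigl(\open_{\sharp}^+ a^{\alpha,\phi}, a^{\alpha,\phi}\bigr) < \infty \;\Longleftrightarrow\; \phi \in \Rang(\opeg^+),
\end{equation*}
and, in that case, $\opeh^+ a^{\alpha,\phi}$ converges to the (unique by injectivity) preimage of $\phi$ under $\opeg^+$ in the space $\seth_{\inc}^+(D)$. The analogous statement with $\open_{q,\sharp}^+$, $\opeh_q^+$, $\opeg_q^+$ and $\seth_{\inc}^q(D)$ holds as well.

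The first step is to verify that the hypotheses of the abstract theorem are met. Compactness and injectivity of $\opeh^\pm$ and $\opeh_q^\pm$ come from Lemma \ref{Ch4:lemHerg} and Lemma \ref{lemHergq}; injectivity of $\opeg^\pm$ is in Lemma \ref{Ch4:lemHerg}, while that of $\opeg_q^\pm$ is Theorem \ref{Lem:injectiveG_q}; and the fact that $\opet_{\sharp}$ is self-adjoint and coercive on $\seth_{\inc}^+(D)$ (and hence on the closed subspace $\seth_{\inc}^q(D)$) follows from Lemma \ref{Ch4:Lem:T} together with the standard symmetrization that turns $-\Re \opet = \opet_0 + \opet_1$ into a coercive-plus-compact pair. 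Once these ingredients are in place, parts (1) and (2) are direct consequences of the full-data GLSM statement applied respectively to $\phi = \sepgre{z}$ and $\phi = \sepgreq{z}$, combined with the range characterizations of Theorem \ref{Ch4:TheoG} ($\sepgre{z} \in \Rang(\opeg^+) \iff z \in D$, and $\sepgreq{z} \in \Rang(\opeg^+) \iff z \in D_p$). The identification of the limit $v_z$ in each case comes from the constructive part of the proof of Theorem \ref{Ch4:TheoG}: one solves the classical interior transmission problem \eqref{oitp} with boundary data $(\ftd,\ftn) = (\Phi(\cdot;z), \partial\Phi(\cdot;z)/\partial\nu)$ or $(\greq{z}, \partial \greq{z}/\partial\nu)$ on $\partial D$, and the preimage is $(\nabla v_z, v_z)$; uniqueness of this preimage (due to injectivity of $\opeg^+$) forces $\opeh^+ a^{\alpha,z}$ to converge to it.

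For part (3), I apply the single Floquet--Bloch GLSM to the target $\opei_q^* \sepgreq{z}$ and use the range characterization of Theorem \ref{Lem:injectiveG_q2}, which gives $\opei_q^*\sepgreq{z} \in \Rang(\opeg_q^+)\iff z \in \Dtot_p$. To identify the limit $h_z$, I revisit the two constructive branches in the proof of Theorem \ref{Lem:injectiveG_q2}. If $z \in \Dpoutp$, then $z$ lies in a connected component of $D_p$ disjoint from $\omega$, and on that component one solves the classical interior transmission problem \eqref{oitp} with boundary data $\greq{z}$ and $\partial \greq{z}/\partial\nu$; propagating the resulting $(u_z,v_z)$ by $\alpha_q$-quasi-periodicity and extending by $-\greq{z}$ outside $\Dpoutp$ yields the preimage of the lemma's first formula. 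If instead $z \in \Dint_p$, then one must use the \emph{new} interior transmission problem of Definition \ref{nitp} (well-posed by Assumption \ref{as-nitp}) with the same boundary data on $\partial \Dint$; its solution $(u,f)$ is extended $\alpha_q$-quasi-periodically to produce the $(\widehat u_z, \widehat v_z)$ of the lemma's second formula. Injectivity of $\opeg_q^+$ then singles out $h_z$ as the limit of $\opeh_q^+ \tilde a_q^{\alpha,z}$.

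The main obstacle I expect is not conceptual but bookkeeping: one must carefully check that the GLSM machinery survives the restriction to $\seth_{\inc}^q(D)$, i.e.\ that $\opet_{\sharp}$ retains coercivity there and that the preimage obtained from the GLSM theorem lies in this smaller space rather than only in $\seth_{\inc}^+(D)$. Coercivity is inherited since $\seth_{\inc}^q(D)$ is a closed subspace of $\seth_{\inc}^+(D)$; to ensure membership of the preimage, one uses that $\opeg_q^+$ is defined precisely as $(\opeh_q^+)^*\opet$ on $\overline{\Rang(\opeh_q^+)}$ and that its range characterization has already been proved to hit $\opei_q^* \sepgreq{z}$ with a preimage in $\seth_{\inc}^q(D)$ by the constructions of Theorem \ref{Lem:injectiveG_q2}. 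A second subtle point is that the two branches of the geometric case in part (3) yield preimages of the same functional form $(\nabla v, v)$ but solving \emph{different} boundary value problems (classical ITP versus new ITP); the limit $h_z$ is thus discontinuous across $\partial \Dpout \cap \partial \Dint$, which is consistent with Assumption \ref{HypoLSM} that these boundaries are disjoint. Once these points are addressed, the remaining arguments are routine and follow the template of \cite[Section 6]{Thi-Phong4}.
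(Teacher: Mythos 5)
Your proposal is correct and follows essentially the route the paper intends: the paper states Lemma \ref{glsm} without proof, referring precisely to the GLSM framework of \cite{Audib2014,Audib2015a} applied to the factorizations of Theorem \ref{TheoFactorization} and \eqref{2FactfNq}, combined with the range characterizations of Theorems \ref{Ch4:TheoG} and \ref{Lem:injectiveG_q2} and the arguments of \cite[Section 6]{Thi-Phong4}. Your identification of the limits from the constructive branches of those two theorems (classical ITP on $D$ for parts (1)--(2) and for $z\in\Dpoutp$, the new ITP of Definition \ref{nitp} for $z\in\Dint_p$) matches the paper's constructions and the formula \eqref{def:fz}.
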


\noindent
We then consider the following imaging functional that characterizes $\Dint$,
\begin{equation} \label{IndFuncDiffIm}
\I^{\pmp}_\alpha(z) = \left({(\open_{\sharp}^\pmp a^{\alpha,z}, a^{\alpha,z}) \left(1 + \frac{(\open_{\sharp}^\pmp a^{\alpha,z}, a^{\alpha,z})}{D^{\pmp}(a^{\alpha,z}_q, \tilde a^{\alpha,z}_q)}\right)}\right)^{-1}
\end{equation}
where for $a$ and $b$ in $\ell^2(\Zd)$,  
$$ D^{\pmp}(a, b):= \left(\open_{\sharp}^\pmp (a - \opei_q b), (a - \opei_q b)\right). $$ 

\noindent Based on Lemma \ref{glsm}, we can show in the following Theorem that the  functional  $\I^{\pmp}_{\alpha}(z)$  provides an indicator function for $D\setminus \Dpout$, i.e. the defect and the periodic components of the background that intersects $\omega$. 

\begin{theorem} \label{DiffImagFunc}
Under Assumptions \ref{Ass:nk}, \ref{HypoLSM}, \ref{as-nitp} and the Assumption that the following interior transmission problem has only trivail solution 
\begin{equation}
\left\{
	\begin{array}{ll}
	\nabla \cdot \ntm \nabla u + k^2 \nte u = 0 &\quad \text{in} \; \; \domp \\[1.25ex]
	\nabla \cdot \ntm_p \nabla v + k^2 \nte_p v = 0 &\quad \text{in} \; \; \domp \\[1.25ex]
	u - v = 0 &\quad \text{on} \; \; \partial \domp \\[1.25ex]
	\nu \cdot \ntm \nabla u - \nu \cdot \ntm_p \nabla v = 0 &\quad \text{on} \; \; \partial \domp
	\end{array}
\right.
\end{equation} we have 
$$z \in D\setminus \Dpoutp \mbox{\;\;  if and only if \;\; } \lim_{ \alpha \to 0} \I^{\pmp}_\alpha(z) > 0.$$ 
(Note that $D\setminus \Dpoutp = \domp \cup \Dpint_p$ contains the physical defect $\omega$ and {$\Dpint_p := D_p \setminus \Dpoutp$} the components of $D_p$ which have nonempty intersection with the defect).
\end{theorem}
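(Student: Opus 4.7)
The plan is to analyze the $\alpha\to 0$ limit of
\[
\I^{\pmp}_\alpha(z) \;=\; \bigl(A(z) + A(z)^2/B(z)\bigr)^{-1},
\]
where $A(z):=(\open_{\sharp}^\pmp a^{\alpha,z}, a^{\alpha,z})$ and $B(z):=D^{\pmp}(a_q^{\alpha,z}, \tilde a_q^{\alpha,z})$, so that the limit is strictly positive iff $A$ stays bounded \emph{and} $B$ stays bounded away from $0$. Lemma \ref{glsm}(1) already disposes of $z\notin D$ via $A\to\infty$. For $z\in D$, using the factorization $\open_{\sharp}^\pmp=(\opeh^\pmp)^*\opet_\sharp\,\opeh^\pmp$, the coercivity of $\opet_\sharp$ on $\seth_{\inc}(D)$, and the identity $\opeh^\pmp\opei_q=\opeh^\pmp_q$, one gets $B(z)\gtrsim\|\opeh^\pmp a_q^{\alpha,z}-\opeh^\pmp_q\tilde a_q^{\alpha,z}\|^2$; hence the behavior of $B$ is governed by the $L^2$ limit $v_z-h_z$ furnished by Lemma \ref{glsm}(2)--(3).

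For $z\in\Dpoutp$, the singularities of $\greq{z}$ lie on the $L$-periodic orbit of $z$, entirely within $\Dpoutp$, so $\greq{z}$ is smooth on $D\setminus\Dpoutp=\Dpint_p\cup\omega$. Since $\Dpoutp$ and $\Dpint_p\cup\omega$ are disjoint (by construction of $\Dpint,\Dpout$ and Assumption \ref{HypoLSM}), the ITP \eqref{oitp} on $D$ decouples across these pieces: on each component of $\Dpint_p\cup\omega$ the pair $(0,-\greq{z})$ solves the ITP with data $\greq{z}$, and uniqueness forces $v_z=-\greq{z}$ there, matching $h_z|_{\Dint_p\cap D}$. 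Combined with $h_z|_{\Dpoutp}=v_z$ by construction, this gives $v_z\equiv h_z$ on $D$, so $B(z)\to 0$ and $\I^{\pmp}_\alpha(z)\to 0$.

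For $z\in D\setminus\Dpoutp=\Dpint_p\cup\omega$, I split in two cases. If $z\in\omega\setminus D_p$, then $z\notin D_p$, so Lemma \ref{glsm}(2) yields $\|\opeh^\pmp a_q^{\alpha,z}\|\to\infty$, while $\|\opeh^\pmp_q\tilde a_q^{\alpha,z}\|$ remains bounded (since $z\in\Dint_p\subset\Dtot_p$); a triangle estimate then gives $B\to\infty$, so $A^2/B\to 0$ and $\I^{\pmp}_\alpha\to 1/A>0$. If $z\in\Dpint_p$ (which absorbs $\omega\cap D_p\subset\Dpint$), both Herglotz limits exist; the decoupling argument above still yields $v_z=-\greq{z}=h_z$ on $\Dpoutp$, and one is reduced to showing $v_z\not\equiv \widehat v_z$ on $\Dint$, where $\widehat v_z$ is the $\alpha_q$-quasi-periodic extension of the new ITP solution of Definition \ref{nitp}.

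I argue this last point by contradiction: assume $v_z=\widehat v_z$ on $\Dint$. Matching the standard and new ITP boundary and co-normal traces on $\partial\Dint$ and introducing $\zeta:=(u_z-\widehat u_z)-\opes{k}(v_z)$, I use the key identity $\nabla\cdot\ntm_p\nabla\opes{k}(v_z)+k^2\nte_p\opes{k}(v_z)=0$ in $\Dint$ (established in the proof of Theorem \ref{Lem:injectiveG_q} and inherited from the periodic Green's function $\Phi(\nte_p;\cdot)$) to show that $(\zeta,-\opes{k}(v_z))$ satisfies the homogeneous \emph{standard} interior transmission problem on $\Dint$ with coefficients $(\ntm,\nte)$ versus $(\ntm_p,\nte_p)$. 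Since $\ntm\equiv\ntm_p$ and $\nte\equiv\nte_p$ on $\Dpint\setminus\omega$, a Cauchy-data propagation argument across $\partial\Dint$ localizes the problem to $\omega$, reducing it to the homogeneous ITP posited in the theorem's extra assumption; this forces $\zeta\equiv 0$ and $\opes{k}(v_z)\equiv 0$ on $\omega$. Backtracking through the new ITP equation for $\widehat u_z$ and the injectivity of $\opeg^\pmp_q$ (Theorem \ref{Lem:injectiveG_q}) then yields $v_z\equiv 0$ on $D$, contradicting the non-triviality of the Cauchy datum $\greq{z}$ on $\partial D$. The main obstacle is this last manipulation: cleanly isolating the $\omega$-ITP of the extra assumption from the $\Dint$-ITP obtained by differencing the two transmission systems, especially when the defect $\omega$ overlaps $\Dpint$.
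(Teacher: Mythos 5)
The paper itself gives no argument for this theorem beyond a one-line citation of Theorem 5.2 in \cite{Thi-Phong4}, so your proposal has to stand on its own. Its architecture is the right one: reducing the sign of $\lim_{\alpha\to0}\I^{+}_\alpha(z)$ to the boundedness of $A=(\open_{\sharp}^+a^{\alpha,z},a^{\alpha,z})$ and the behaviour of $B\simeq\|\opeh^{+}a_q^{\alpha,z}-\opeh^{+}_q\tilde a_q^{\alpha,z}\|^2$ via the factorization and the coercivity of $\opet_\sharp$; the three cases $z\notin D$, $z\in\Dpoutp$, $z\in\domp\setminus D_p$ are handled correctly; and the pair $(\zeta,-\opes{k}(f))$ solving a homogeneous transmission problem on $\Dint$ with $(\ntm,\nte)$ against $(\ntm_p,\nte_p)$, localized to $\domp$ by unique continuation across $\Dint\setminus\ol{\domp}$ where the coefficients coincide, is exactly the mechanism by which the extra assumption enters.

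The gap is in the final contradiction, and it is real. The extra assumption only yields $\zeta=\opes{k}(f)=0$ in $\domp$, hence by unique continuation $\opes{k}(f)=0$ and $u_z=\widehat u_z$ throughout $\Dint$. This is \emph{not} absurd: it merely says the new transmission problem degenerates to the classical one on $\Dint$, and ``$v_z\equiv0$'' does not follow --- $\opes{k}(f)=0$ on $\Dint$ constrains the field radiated into $\Dint$ by the quasi-periodic copies of $f$, not $f$ itself, and the injectivity of $\opeg_q^{\pm}$ gives nothing since you have not exhibited two distinct preimages of one datum. The source of the trouble is that you shrank the contradiction hypothesis to ``$v_z=\widehat v_z$ on $\Dint$'': for $B\not\to0$ it suffices that $v_z\neq h_z$ somewhere on $D$, so you may assume equality on all of $\Dpint_p\cup\domp$, and the unperturbed copies $\Dpint+mL$, $m\neq0$, carry the information you discarded. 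There $(u_z,v_z)$ solves the classical transmission problem with the \emph{periodic} coefficients $(\ntm_p,\nte_p)$, and $v_z=\widehat v_z$ forces $f|_{\Dpint}$ to be the second component of that problem posed on $\Dpint$ with datum $\greq{z}$; call its first component $U$. Comparing $(u_z,f)$ (coefficients $(\ntm,\nte)$) with $(U,f)$ (coefficients $(\ntm_p,\nte_p)$), both with the same datum on $\partial\Dint$, and propagating the vanishing Cauchy data of $u_z-U$ across $\Dint\setminus\ol{\domp}$, you land on the $\domp$-transmission problem of the extra assumption for the pair $(u_z,U)$ itself; its triviality gives $u_z=0$ in $\domp$, hence $u_z=0$ in $\Dint$ by unique continuation, hence $f+\greq{z}$ has zero Cauchy data on $\partial\Dint$ and so $f=-\greq{z}$ in $\Dint$ --- impossible, since the orbit $\{z+mL\}$ meets $\Dpint\subset\Dint$ where $\greq{z}$ is singular while $f\in H^1(\Dint)$. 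That (together with the minor adjustment needed when $\domp\not\subset D_p$, where $U$ must be extended by the free-space problem on $\domp\setminus D_p$) is the step your write-up is missing.
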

\begin{proof}
The proof of Theorem \ref{DiffImagFunc} follows the same line as Theorem 5.2 in \cite{Thi-Phong4}.
\end{proof}

\noindent
We recall that exactly the same can be shown for down-to-up incident field, by simply replacing the upper index $+$ with $-$.  It is also possible to handle the case with noisy data, and we refer the reader to  \cite{Thi-Phong3} and  \cite{tpnguyen} for more detailed discussion.

\subsection{Numerical Experiments}
\label{sec:numerical}
We conclude by showing several numerical examples to test  our differential imaging algorithm. We limit ourselves to examples in ${\mathbb R}^2$. The data  is computed   with both  down-to-up and  up-to-down plane-waves  by solving the forward scattering problem based on the spectral discretization  scheme of  the volume integral formulation of the problem presented in \cite{Thi-Phong2}.

\noindent
Let us  denote by 
$$Z^{d-1}_{inc}:=\{j = q + M\ell, \;  q \in \ZM, \; \ell \in \Zd \; \text{and} \; \ell \in \interd{-N_{min}}{N_{max}} \}$$
the set of indices for the incident waves (which is also the set of indices for measured Rayleigh coefficients). The values of all  parameters used in our experiments  will be indicated below. The discrete version of the operators $\open^{\pm}$ are given by the $N_{inc} \times N_{inc}$ matrixes
\begin{equation}
\label{discre:MatrixN}
\open^{\pm} := \left(\raycoefpm{u^s}{\ell;j}\right)_{\ell,j \in \Zd_{inc}}.	
\end{equation}
Random noise is added to the data. More specifically, we in our computations we use 
\begin{equation}
\label{discre:MatrixNnoise}
\open^{\pm,\delta}(j,\ell): = \open^{\pm}(j,\ell)\big(1 + \delta A(j,\ell)\big), \quad \forall (j,\ell) \in \Zd_{inc}\times \Zd_{inc}
\end{equation}
where $A = (A(j,\ell))_{N_{inc}\times N_{inc}}$ is a matrix of uniform complex
random variables with real and imaginary parts in $[-1,1]^2$ and  $\delta > 0$
is the noise level. In our examples we take $\delta = 1\%$.

\noindent
For  noisy data, one needs to redefine the functionals $J^{\pmp}_{\alpha}$
and $J^{\pmp}_{\alpha, q}$ as \begin{equation}
	\begin{array}{ll}
		J^{\pmp,\delta}_{\alpha}(\phi,\aher) := \alpha\left(
                  (\open_{\sharp}^{\pmp,\delta}\aher,\aher) + \delta
                  \|\open_{\sharp}^{\pmp,\delta}\|
                  \|a\|^2\right) +  \|\open^{\pmp, \delta}\aher - \phi\|^2, \\[1.5ex]
		J^{\pmp,\delta}_{\alpha,q}(\phi,\aher) :=\alpha\left(
                  (\open_{\sharp}^{\pmp,\delta}\opei_q \aher,\opei_q\aher) + \delta
                  \|\open_{\sharp}^{\pmp,\delta}\|
                  \|a\|^2\right) +  \|\open^{\pmp, \delta}_q\aher - \phi\|^2
	\end{array}
\end{equation}
 We then consider  $\aher^{\alpha,z}_\delta$, $\aher_{q,\delta}^{\alpha,z}$ and $\tilde \aher^{\alpha,z}_{q,\delta}$ 
in $\ell(\Zd)$ as the minimizing sequence  of, respectively,  
$$
J^{\pmp,\delta}_{\alpha}(\sepgre{z},\aher), \;
J^{\pmp,\delta}_{\alpha}(\sepgreq{z},\aher) \mbox{ and } J^{\pmp,\delta}_{\alpha,q}(\sepgreq{z},\aher).
$$
The noisy indicator function takes the form
\begin{equation} \label{IndFuncDiffImNoisy}
\I^{\pmp,\delta}_\alpha(z) = \left({\mathcal{G}^{+,\delta}(a^{\alpha,z}_\delta)\left(1 +
    \frac{\mathcal{G}^{+,\delta}(a^{\alpha,z}_\delta)}{D^{\pmp, \delta}(a^{\alpha,z}_{q,\delta}, \tilde a^{\alpha,z}_{q,\delta})}\right)}\right)^{-1}
\end{equation}
where for $a$ and $b$ in $\ell^2(\Zd)$,  
$$ D^{\pmp,\delta}(a, b):= \left(\open_{\sharp}^{\pmp,\delta} (a - \opei_q b),
  (a - \opei_q b)\right) $$
and 
$$
\mathcal{G}^{+,\delta}(\aher) :=  (\open_{\sharp}^{\pmp,\delta}\aher,\aher) + \delta
                  \|\open_{\sharp}^{\pmp,\delta}\|
                  \|a\|^2.
$$

\noindent
Defining in a similar way the  indicator function
$\I^{-,\delta}(z)$ corresponding to up-to-down incident waves, we use the following indicator function in our numerical examples
$$
\I^\delta(z): = \I^{+,\delta}(z) + \I^{-,\delta}(z).
$$
In the three first examples, we consider the periodic background $D_p$, in which each cell consists  of two circular components, namely the  discs with radii $r_1$, $r_2$ specified below. The physical parameters are set as 
\begin{equation}
\label{Num:parameters}
k = 3.5\pi/3.14; \; \; A_p = I, n_p = 2 \text{ inside the discs}, \text{and }  A_p = I , n_p = 1\;  \text{otherwise}. 
\end{equation}
Letting $\lambda := 2\pi/k$ be the wavelength,  the geometrical parameters are set as
\begin{equation}
\label{Num:parameters1}
\text{the period length }  \, L = \pi \lambda, \; \text{half width of the layer} \, h  =  1.5 \lambda, \; r_1 = 0.3 \lambda,  \; \text{and} \; r_2 = 0.4 \lambda.
\end{equation}
Finally we choose the truncated model 
\begin{equation}
\label{Num:parameters2}
		 M = 3, \;\;  N_{min} = 5\;  \mbox{ and } \; N_{max} = 5 { \mbox{ and } q = 1}
\end{equation} 
The reconstructions are displayed by plotting  the indicator function $\I^\delta(z)$. 

\subsubsection*{Example 1.}
In the first example, we consider the perturbation $\domp$ to be a disc of radius $r_\omega = 0.25 \lambda$ with material properties $A = 3I$, $n = 1$, and located in the component of radii $r_2$ (see  Figure \ref{complexIncluded}-left).  The reconstruction using the indicator function $\I^{\delta}(z)$ is represented in Figure \ref{complexIncluded}-right. We can see in this example that we reconstruct periodic copies of the background component that contain the defect as predicted by the theory. We also observe numerically that the values of the indicator function are very different in the period that contain the defect. This means that, although not indicated by the theory, we numerically can determine the period that contains the defect. 

\begin{figure}[H] 
\centerline{ \begin{tabular}{cc}
\includegraphics[width=0.45\textwidth]{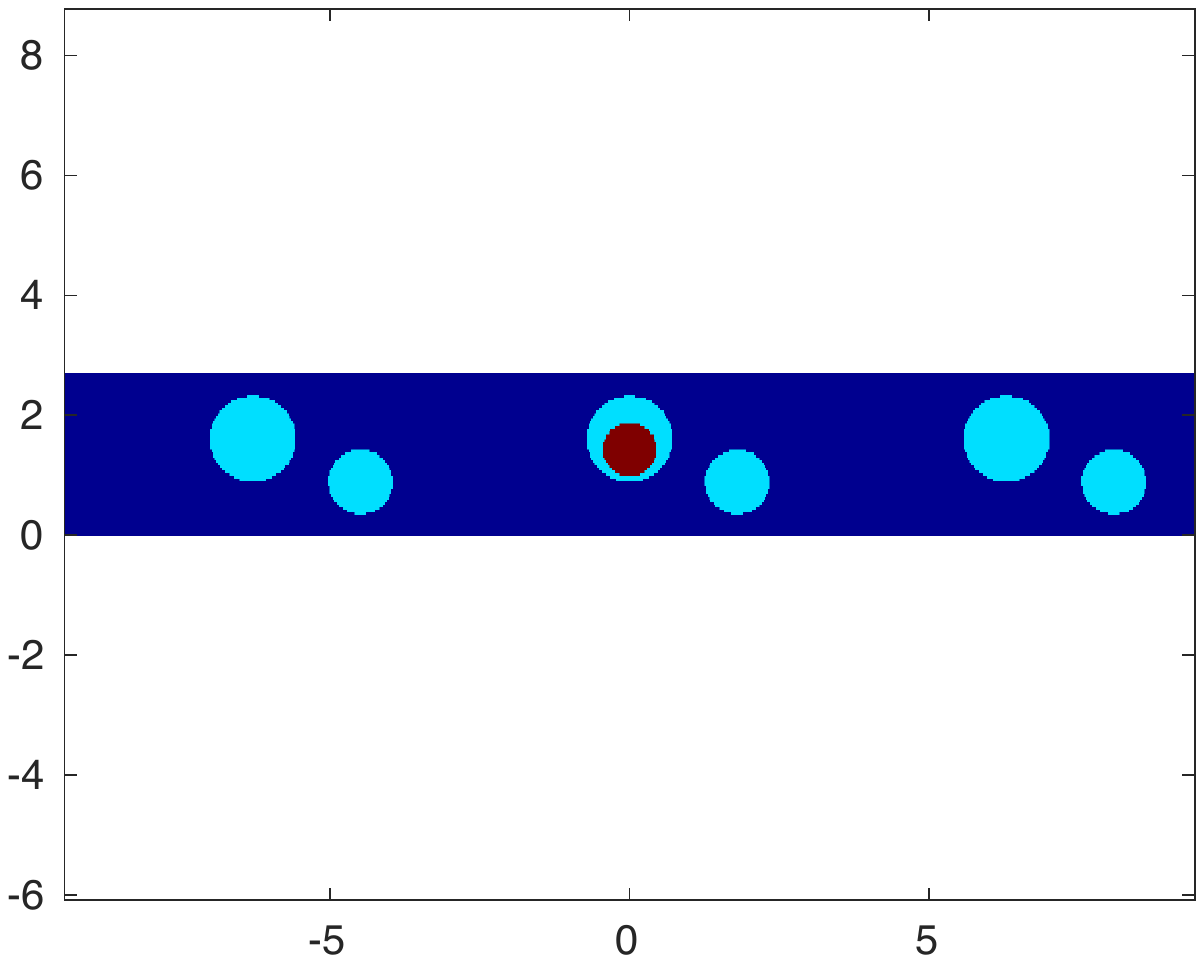}  & \includegraphics[width=0.45\textwidth]{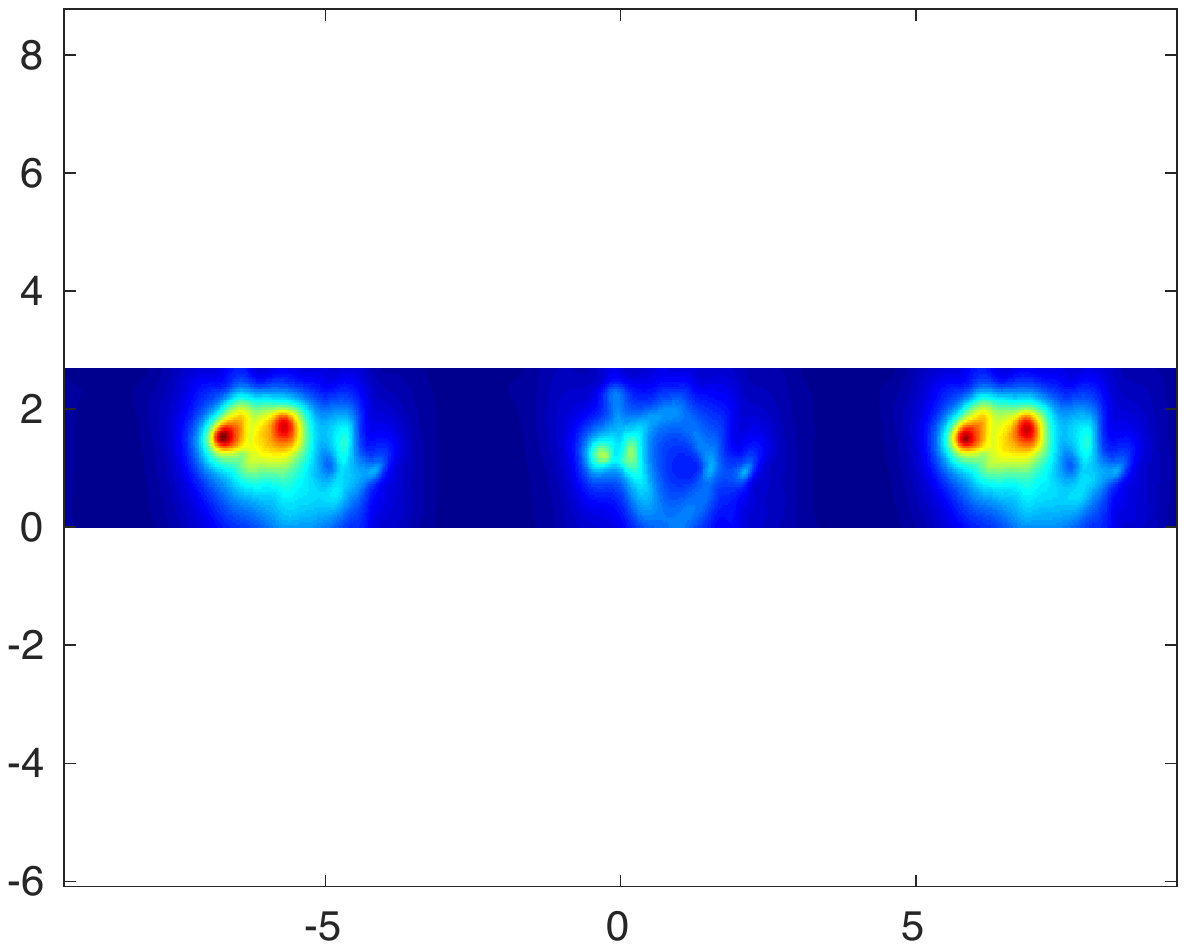} 
\end{tabular}}
\caption{Left: The exact geometry for Example 1. Right: The reconstruction using $z \mapsto
  \I^\delta(z)$}
\label{complexIncluded}
\end{figure}

\subsubsection*{Example 2.} In the second example, we consider the perturbation $\domp$ as in  Example 1 but now located such that $\domp$ has nonempty intersection with $D_p$ but not included in $D_p$ (see  Figure \ref{Composite2.2_structure}-left).  We consider  the refractive index of the defect which now is inhomogeneous. In particular, the refractive  index of the defect is  $A = 3I$ in $\domp \cap D_p$ and $A = 2I$ in $\domp \setminus D_p$. The  reconstruction is represented in Figure \ref{Composite2.2_structure}--right. We have the same conclusion and we additionally better see the part that lies outside the background  components.

\begin{figure}[H] 
\centerline{\begin{tabular}{cc}
\includegraphics[width=0.45\textwidth]{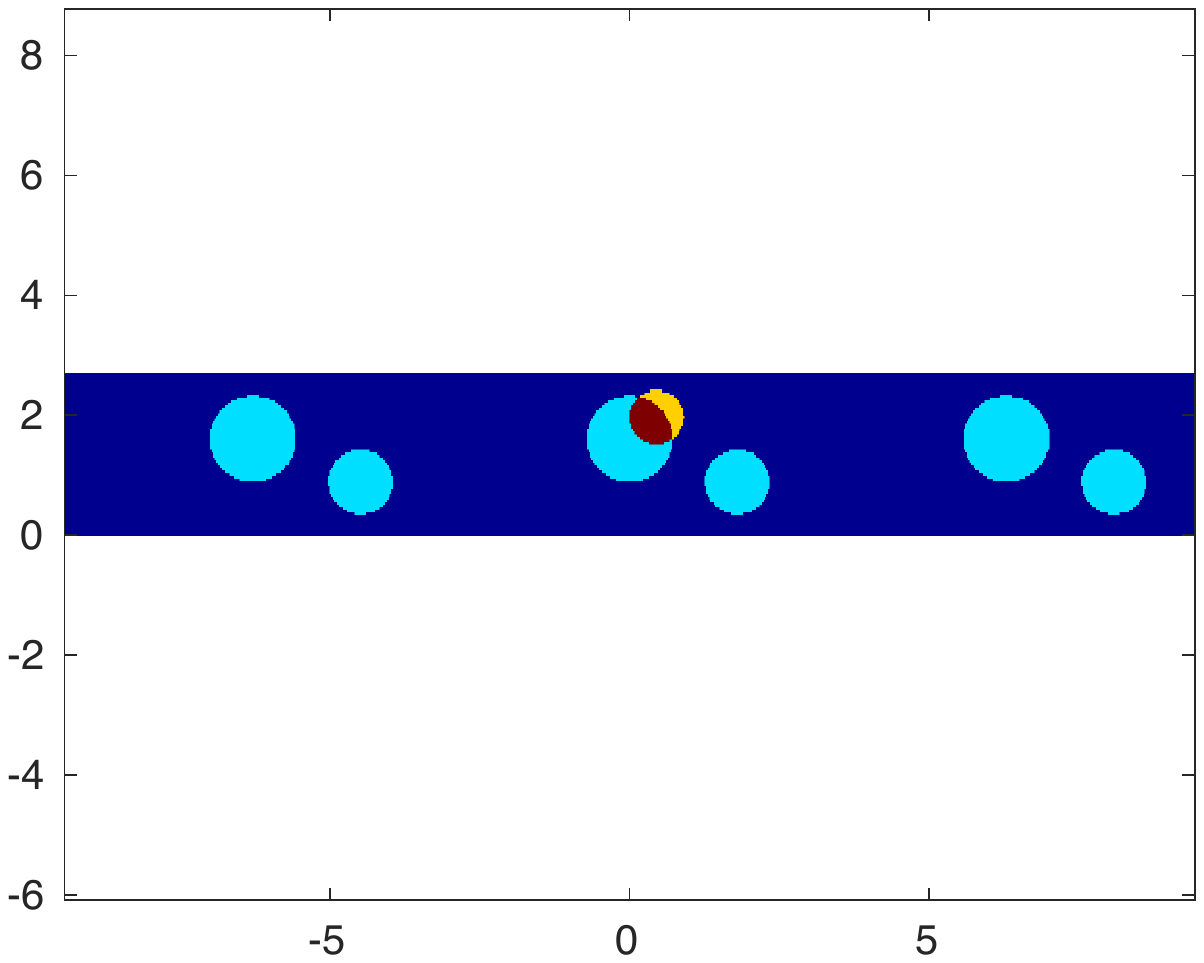} &
\includegraphics[width=0.45\textwidth]{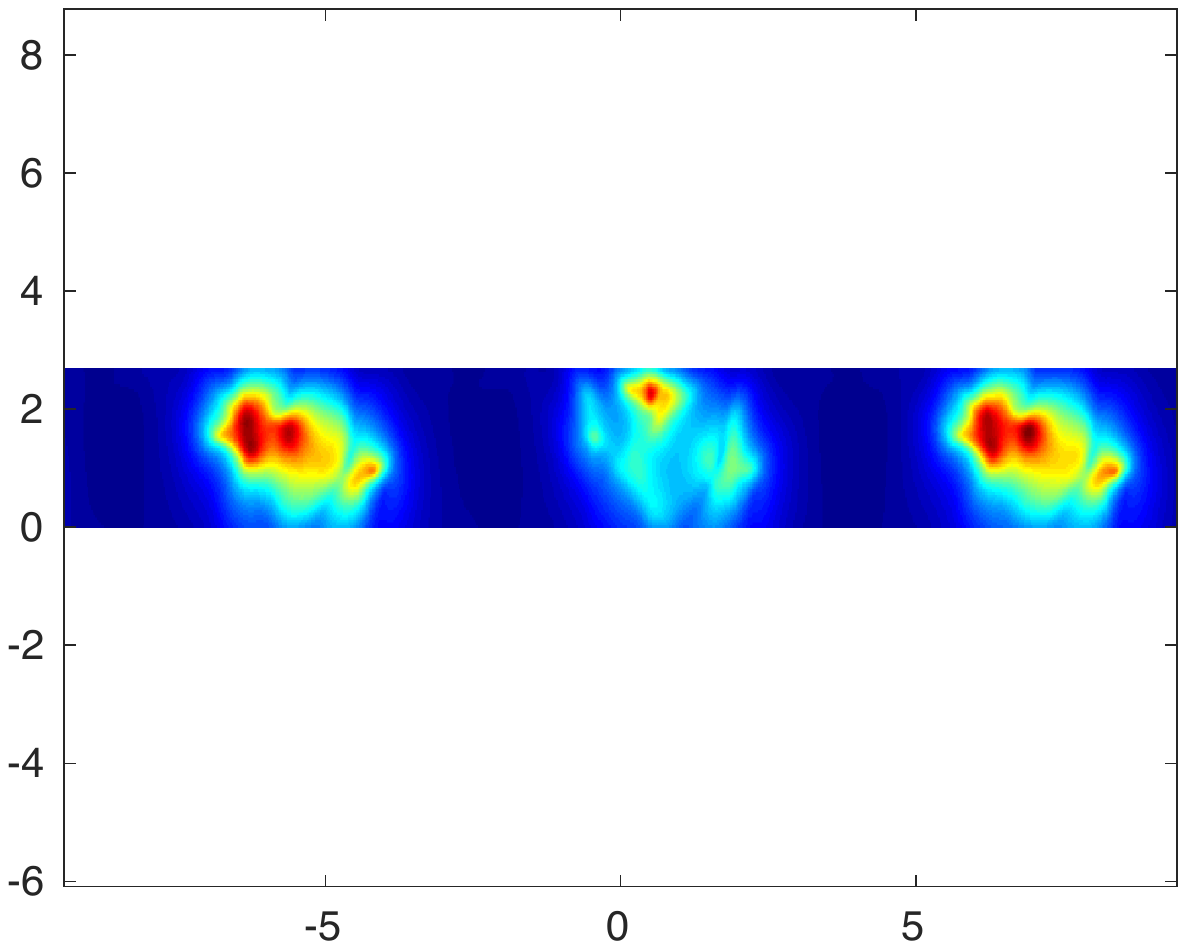} \\
\end{tabular}}
\caption{Left: The exact geometry for Example 2. Right: The reconstruction of the local perturbation using $z \mapsto
  \I^\delta(z)$}
\label{Composite2.2_structure}
\end{figure}

\subsubsection*{Example 3.}
This example shows that when the defect has no intersection with the periodic background, the indicator function $\I^{\delta}(z)$ allows to reconstruct the true defect including its true location in the periodic medium. Here the defect is a disc of $r_\omega=0.25 \lambda$  with $A=2I$.  

\begin{figure}[H] 
\centerline{\begin{tabular}{cc}
\includegraphics[width=0.45\textwidth]{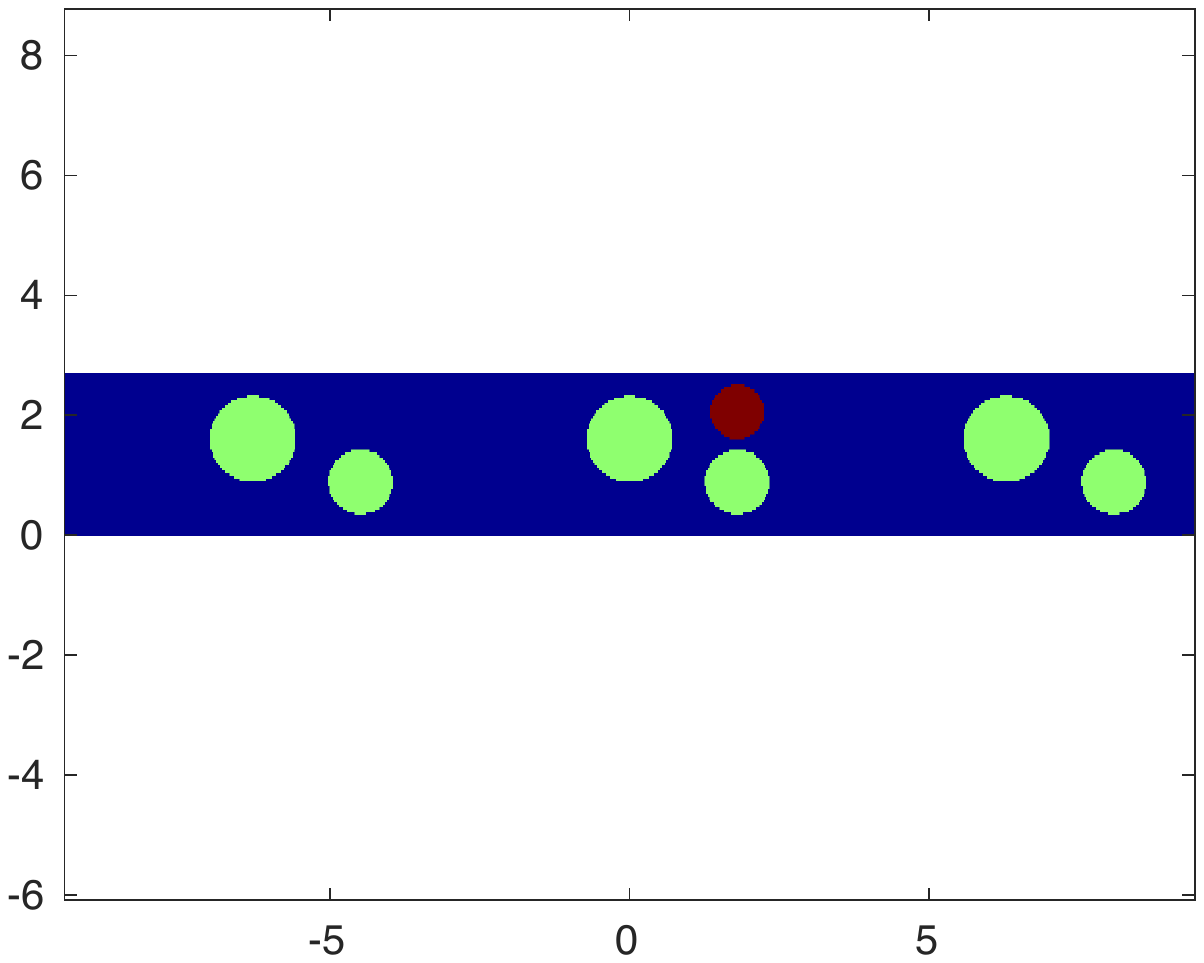} &
\includegraphics[width=0.45\textwidth]{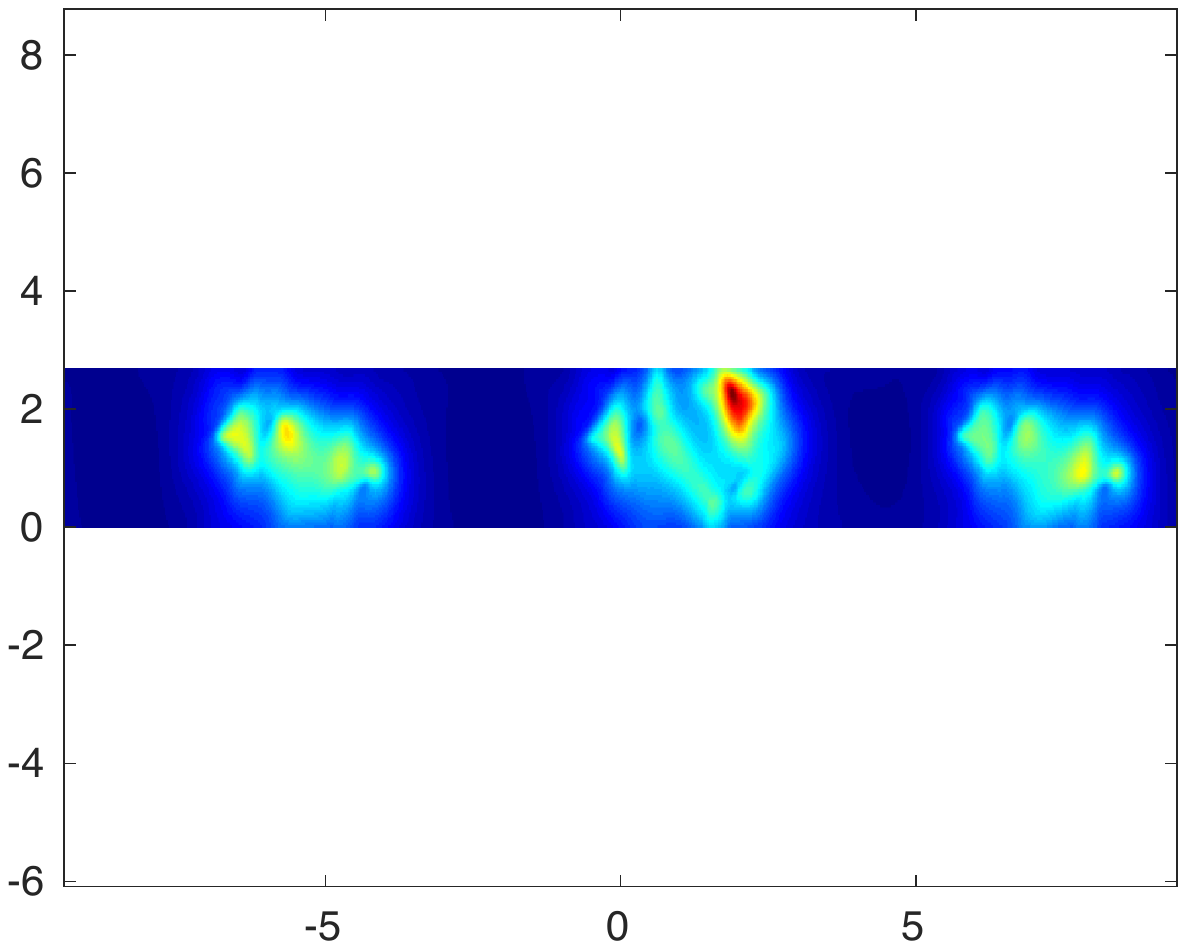}
\end{tabular}}
\caption{Left: The exact geometry for Example 3. Right: The reconstruction using $z \mapsto
  \I^\delta(z)$}
\label{Composite3_structure}
\end{figure}

\noindent
As a conclusion we observe that our numerical examples validate the theoretical prediction provided by Theorem \ref{DiffImagFunc} and produce similar reconstructions as in the case $A=1$ treated in \cite{Thi-Phong3, Thi-Phong4}. The case when the defect is entirely included in a component of the periodic background is theoretically ambiguous in the sense that the cell where the defect is embedded in cannot be determined accurately. However, we numerically observed that also in this case, one is able to detect the location of the period that contains the defect. 

\section*{Acknowledgements}
The research of T-P. Nguyen is supported in part by NSF Grant DMS-1813492.

 \section*{References}
 

\end{document}